\DeclareFontFamily{U}{ntxmia}{\skewchar \font =127}
 \DeclareFontShape{U}{ntxmia}{m}{it}{
                        <-> \ntxmath@scaled ntxmia
                      }{}    
                      \DeclareFontShape{U}{ntxmia}{b}{it}{
                        <-> \ntxmath@scaled ntxbmia
                      }{}
\def\NAT@spacechar{~}
\crefname{figure}{Figure}{Figures}
\crefname{claim}{Claim}{Claims}
\crefname{figure}{figure}{figures}
\crefname{claim}{claim}{claims}
\Crefname{figure}{Figure}{Figures}
\Crefname{claim}{Claim}{Claims}
\newtheorem{definition}{Definition}[section]
\newtheorem{claim}{Claim}
\newtheorem{proposition}[definition]{Proposition}
\newtheorem{theorem}[definition]{Theorem}
\newtheorem{corollary}[definition]{Corollary}
\newtheorem{lemma}[definition]{Lemma}
\theoremstyle{definition}
\newtheorem{remark}[definition]{Remark}
\newenvironment{claimproof}{%
\let\origqed=\qedsymbol%
\renewcommand{\qedsymbol}{$\blacktriangleleft$}%
\begin{proof}}{\end{proof}\let\qedsymbol=\origqed}
\numberwithin{equation}{section}
\renewcommand{\binom}[2]{\ensuremath{\mleft(\kern-.1em\genfrac{}{}{0pt}{}{#1}{#2}\kern-.1em\mright)}}    
\newcommand{\inbinom}[2]{\ensuremath{\bigl(\kern-.1em\genfrac{}{}{0pt}{}{#1}{#2}\kern-.1em\bigr)}} 
\newcommand*\nume{\ensuremath{\mathrm{e}}}
\newcommand{\cS}{\mathcal{S}}
\newcommand{\NN}{\mathbb{N}}
\newcommand{\PP}{\mathbb{P}}
\newcommand{\del}{\partial}
\newcommand{\mix}{\mathrm{mix}}
\def\moverlay{\mathpalette\mov@rlay}
\def\mov@rlay#1#2{\leavevmode\vtop{%
  \baselineskip\z@skip \lineskiplimit-\maxdimen
  \ialign{\hfil$\m@th#1##$\hfil\cr#2\crcr}}}
\newcommand{\charfusion}[3][\mathord]{
    #1{\ifx#1\mathop\vphantom{#2}\fi
        \mathpalette\mov@rlay{#2\cr#3}
      }
    \ifx#1\mathop\expandafter\displaylimits\fi}
\renewcommand{\deg}{\operatorname{deg}}
\newcommand{\eps}{\epsilon}
\newcommand{\COMMENT}[1]{}
\renewcommand{\COMMENT}[1]{\footnote{\textcolor{blue!70!black}{#1}}} 
\newcommand{\COMNEW}[1]{}
\renewcommand{\COMNEW}[1]{\footnote{\textcolor{red!70!black}{#1}}} 
\title{Speeding up random walk mixing by starting from a uniform vertex}
\author[A.~Espuny D\'iaz]{Alberto Espuny D\'iaz}
\email{alberto.espuny-diaz@tu-ilmenau.de}
\address[Espuny D\'iaz]{Institut f\"ur Mathematik, Technische Universit\"at Ilmenau, 98684 Ilmenau, Germany.}
\author[P.~Morris]{Patrick Morris}
\email{pmorrismaths@gmail.com}
\address[Morris, Peraranau, Serra]{Departament de Matem\`atiques and IMTECH, Universitat Polit\`ecnica de Catalunya (UPC), Barcelona, Spain.}
\author[G.~Perarnau]{Guillem Perarnau}
\email{guillem.perarnau@upc.edu}
\address[Perarnau, Serra]{Centre de Recerca Matem\`atica, Barcelona, Spain.}
\author[O.~Serra]{Oriol Serra}
\email{oriol.serra@upc.edu}
\thanks{This research has been supported by the Spanish Agencia Estatal de Investigación under projects PID2020-113082GB-I00 and the Severo Ochoa and María de Maeztu Program for Centers and Units of Excellence in R\&{}D (CEX2020-001084-M). 
Alberto Espuny Díaz was partially supported by the Carl Zeiss Foundation and by DFG (German
Research Foundation) grant PE~2299/3-1.
Patrick Morris was supported by the DFG Walter Benjamin program - project number 504502205.
}
\date{\today}
\begin{document}

\begin{abstract}
The theory of rapid mixing random walks plays a fundamental role in the study of modern randomised algorithms. 
Usually, the mixing time is measured with respect to the worst initial position.
It is well known that the presence of bottlenecks in a graph hampers mixing and, in particular, starting inside a small bottleneck significantly slows down the diffusion of the walk in the first steps of the process.
The average mixing time is defined to be the mixing time starting at a uniformly random vertex and hence is not sensitive to the slow diffusion caused by these bottlenecks.

In this paper we provide a general framework to show logarithmic average mixing time for random walks on graphs with small bottlenecks.
The framework is especially effective on certain families of random graphs with heterogeneous properties.
We demonstrate its applicability on two random models for which the mixing time was known to be of order $(\log n)^2$, speeding up the mixing to order $\log n$.
First, in the context of smoothed analysis on connected graphs, we show logarithmic average mixing time for randomly perturbed graphs of bounded degeneracy.
A particular instance is the Newman-Watts small-world model.
Second, we show logarithmic average mixing time for supercritically percolated expander graphs.
When the host graph is complete, this application gives an alternative proof that the average mixing time of the giant component in the supercritical Erd\H os-R\'enyi graph is logarithmic.
\end{abstract}
\maketitle

\section{Introduction}\label{sec:intro}

Random walks on graphs are one of the fundamental tools for sampling (see, e.g.,~\cite{R03}).
Applications are numerous in areas such as computer science, discrete mathematics and statistical physics.
Prominent examples include the polynomial-time algorithm to estimate the volume of a convex body~\cite{DFK91}, computing the matrix permanent~\cite{JS89} or the use of Glauber dynamics to sample from Gibbs distributions, in particular from proper colourings~\cite{V00}. 

Most usually, the size of the sampling space is exponential in the input size, and fully exploring this space is computationally intractable. 
The Markov chain Monte Carlo (MCMC) method consists of running a random walk in an appropriately chosen graph, whose vertex set is the sample space, until its distribution is arbitrarily close to equilibrium, regardless of the initial state.
At that time we say the walk has mixed, and the time until it does is called the (worst-case) \emph{mixing time}.
To obtain efficient sampling algorithms it suffices to prove that the mixing time is poly-logarithmic in the input size.

The connection between rapid mixing and expanders is well-established.
In the context of random walks, expansion is measured by means of a graph parameter called \emph{conductance}; see \cref{sec:conductance} for the precise definition.
Jerrum and Sinclair~\cite{JS89} gave an upper bound on the mixing time depending on the conductance and the logarithm of the minimum stationary value.
This bound is central in the theory of Markov chains.

Random environments are particularly interesting sampling spaces and, in the last 20 years, researchers have developed the theory of random walks on random graphs.
As expected, the good expansion properties of random graphs ensure rapid mixing.
By the Jerrum-Sinclair bound, graphs with conductance bounded away from zero mix in logarithmically many steps and usually exhibit \emph{cut-off}, that is, the distribution converges rapidly to the stationary distribution in a small window of time.
Good examples are random graph models with control on the degrees, such as random regular graphs~\cite{LS10}, random graphs with given degree sequences~\cite{BLPS18,BS17}, their directed analogues~\cite{BCS18,CCPQ22}, or graphs perturbed by random perfect matchings~\cite{HSS22}.

Nonetheless, the presence of small obstructions  slows down the mixing.
A canonical example is the giant component of a sparse Erd\H os-R\'enyi graph $G(n,c/n)$ with $c>1$.
This component contains relatively small \emph{bottlenecks}, that is, connected sets that only have few edges connecting them to the rest of the graph. 
In such cases, tools like the Jerrum-Sinclair bound fail to pin down the correct order of the mixing time.
Fountoulakis and Reed~\cite{FR07} introduced a strengthening of the bound that is sensitive to small bottlenecks and used it to show that the mixing time of the largest component in $G(n,c/n)$ is asymptotically almost surely (a.a.s.\ for short) $O(\log^2 n)$~\cite{FR08}.
Indeed, this is the correct order as the component contains paths of degree $2$ vertices (also referred to as \emph{bare paths}) whose length is of order $\log{n}$.
Starting at the centre of such paths, a random walk takes $\Omega(\log^2 n)$ steps in expectation to escape from it.
We remark that the mixing time in the supercritical random graph $G(n,c/n)$ was also bounded independently by Benjamini, Kozma and Wormald~\cite{BKW14}, using a different approach investigating the \emph{anatomy} of the giant component. 

However, these local bottlenecks are a negligible part of the giant component and the rest of the component has good expansion properties.
This suggests that, if the random walk started outside the bottlenecks, the mixing time would decrease.
This was implicit in the work of Benjamini, Kozma and Wormald~\cite{BKW14} and their description of the giant component, and such a speeding up of mixing time was also conjectured explicitly by Fountoulakis and Reed~\cite{FR08}. 
Berestycki, Lubetzky, Peres and Sly~\cite{BLPS18} confirmed their prediction, showing that there exists $a=a(c)$ such that the mixing time starting at a uniformly random vertex  is asymptotically $a \log{n}$ with high probability (they in fact proved much more, establishing the value of $a(c)$ precisely as well as cut-off for the random walk).
This result reinforces the idea that, in certain heterogeneous scenarios, averaging over the starting position yields more efficient sampling algorithms.

The goal of this paper is to provide a general framework to show logarithmic average-case mixing time for random walks on graphs with small bottlenecks. 

\subsection{Average mixing times} \label{sec:intro-main}

Given an $n$-vertex graph $G$, the \emph{lazy random walk} over $G$ is a Markov chain with state space $V(G)$ which can be defined as follows.
If at any given time we are in a vertex $u\in V(G)$, the lazy random walk stays in $u$ with probability $1/2$, and with probability $1/2$ it moves to a uniformly random neighbour of $u$ in $G$.
If $G$ is a connected graph, it is well known that the lazy random walk over $G$ is ergodic and its distribution converges to the (unique) stationary distribution $\pi_G$ (see, e.g.,~\cite{LPW09} for a comprehensive review of random walks and mixing times).

The total variation distance $d_{\mathrm{TV}}(\mu,\nu)$ between two probability distributions $\nu$ and $\mu$ on the vertex set $V(G)$ of a graph $G$ is defined as 
\begin{equation}\label{equa:dTVdef}
d_{\mathrm{TV}}(\mu,\nu)\coloneqq\max_{A\subseteq V(G)}|\mu(A)-\nu(A)|=\frac12\sum_{v\in V(G)}|\mu(v)-\nu(v)|.
\end{equation}
Let $P_G$ be the transition matrix of the lazy random walk over $G$.
For $\eps>0$, the \emph{$\eps$-mixing time} $t_{\mix}(G,\eps)$ of this lazy random walk is defined as 
\begin{equation*}\label{equa:tmix_def}
t_{\mix}(G,\eps)\coloneqq \min\left\{t\in\mathbb{N}_0: \max_{u\in V(G)}d_{\mathrm{TV}}(\mu_0^uP_G^t,\pi_G)\leq\eps\right\},
\end{equation*}
where $\mu_0^u$ is the distribution supported entirely on $u\in V(G)$.

If instead of considering the worst-case initial vertex we consider a uniformly random vertex $v\in V(G)$, then the quantity $d_{\mathrm{TV}}(\mu_0^v P_G^t,\pi_G)$ is a random variable. We define the  \emph{average $\eps$-mixing time} $\bar t_{\mix}(G,\eps)$ of the lazy random walk, to be the time at which the expectation of this random variable falls below the $\eps$. That is, 
\begin{equation*}\label{equa:avg_tmix_def}
\bar t_{\mix}(G,\eps)\coloneqq\min\left\{t\in\mathbb{N}_0: \frac{1}{n}\sum_{u\in V(G)} d_{\mathrm{TV}}(\mu_0^u P_G^t,\pi_G)\leq\eps\right\}.
\end{equation*}

\begin{remark}\label{rem:other_avg_tmix}
In this work, we will focus on the quantity $\bar t_{\mix}(G,\eps)$, which we believe is a natural candidate for tracking mixing times starting from a uniform vertex. Nonetheless, other related quantities have been used to measure the mixing time from a uniform starting point. 

Indeed, for a vertex $u\in V(G)$, define
\begin{equation*}
t_{\mix}^{(u)}(G,\eps)\coloneqq \min\left\{t\in\mathbb{N}_0: d_{\mathrm{TV}}(\mu_0^uP_G^t,\pi_G)\leq\eps\right\}
\end{equation*}
and consider the random variable $t_{\mix}^{(U_n)}=t_{\mix}^{(U_n)}(G,\eps)$, where $U_n$ is a vertex chosen uniformly at random from $V(G)$.
This notion was the one studied by Berestycki, Lubetzky, Peres and Sly~\cite{BLPS18}. 
It is natural to compare $\bar t_{\mix}$ to $\mathbb{E}(t_{\mix}^{(U_n)})$: in the first case, we average the total variation distance over starting vertices and take the smallest time $t$ when this average is smaller than $\eps$; in the second one, we average the mixing times over the starting vertices (see Figure~\ref{fig:1}).
 In general as functions, neither of  these notions is stronger than the other, in that one can design examples of trajectories for total variation distances $d_{\mathrm{TV}}(\mu_0^uP_G^t,\pi_G)$ for different vertices $u$, showing that $\bar t_{\mix}$ cannot be bounded by a function of $\mathbb{E}(t_{\mix}^{(U_n)})$ and vice versa. 
However, bounding either $\mathbb{E}(t_{\mix}^{(U_n)})$ or $\bar t_{\mix}$ implies that $t_{\mix}^{(U_n)}$ is small with high probability.
In the first case this is a direct application of Markov's inequality. In the second one, define $d_u(t)\coloneqq d_{\mathrm{TV}}(\mu_0^u P_G^t,\pi_G)$, for a vertex $u$, then $\bar t_{\mix}(G,\eps)$ is the time $t$ at which the expected value of   $d_u(t)$ (averaged over starting points) is less than $\eps$. By Markov's inequality, $d_{U_n}(\bar t_{\mix}(G,\eps^2))\leq \eps $ with probability at least $1-\eps$.


\begin{figure}[ht]
	\begin{center}
		\includegraphics[width=0.7\linewidth]{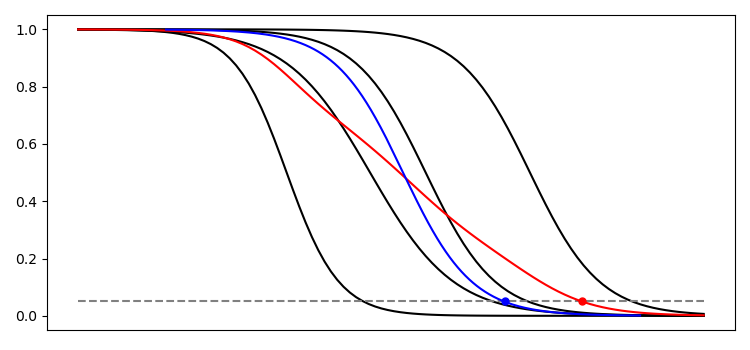}
		\caption{Schematic plot of the total variation distance starting at different vertices and the two average mixing times for $\eps= 0.05$.
        In red, the function $\frac{1}{n}\sum_{u\in V(G)} d_{\mathrm{TV}}(\mu_0^u P_G^t,\pi_G)$ and the dot representing $\bar t_{\mix}(G,\eps)$.
        In blue, the average of mixing times at different thresholds and the dot representing $\mathbb{E}(t_{\mix}^{(U_n)}(G,\eps))$.}
		\label{fig:1}
	\end{center}
\end{figure}

A related but very different  notion is the time it takes to mix starting at $\mu_V$, the uniform distribution over $V$:
\begin{equation*}
t^{(V)}_{\mix}(G,\eps)\coloneqq \min\left\{t\in\mathbb{N}_0: d_{\mathrm{TV}}(\mu_V P_G^t,\pi_G)\leq\eps\right\}.
\end{equation*}
A similar notion has  been studied for directed graphs, where the initial distribution is the in-degree one; see, e.g.,~\cite[Theorem~3]{BCS18}. In general, this latter notion of average mixing time is much smaller than the previous notions and we expect this to also be the case in the settings studied here, although we do not explore this direction. 
\end{remark}

\begin{remark}\label{rem:contractive}
In the literature, the \emph{mixing time} of the random walk is often defined as $t_{\mix}(G)\coloneqq t_{\mix}(G,1/4)$, since the distance to the stationary distribution is contractive after this time. 
However, this might not be the case for $\bar t_{\mix}$.
Consider for instance the lollipop graph $L_{n,k}$: a clique on $k$ vertices and a path on $n-k$ vertices joined by an edge incident to one of the endpoints of the path.
If $k$ and $n-k$ are both very large, then, after one step, the total variation distance is roughly~$0$ if we start at the clique (almost all the mass of $\pi_{L_{n,k}}$ is supported on the clique), and roughly~$1$ if we start at the path.
Taking $k=\lceil\alpha n\rceil$, then 
\begin{equation}
    \frac{1}{n}\sum_{u\in V(G)} d_{\mathrm{TV}}(\mu_0^u P_{L_{n,k}},\pi_{L_{n,k}}) \sim  1-\alpha.
\end{equation}
If $\alpha>3/4$, then $\bar t_{\mix}(L_{n,k},1/4)=1$.
However, the time required to further decrease the distance to the stationary distribution is of order $\Omega(n^2)$, as this is the time required for the walk starting at a typical vertex in the path to hit the clique. 
\end{remark}


\subsection{Our results}

Our results will apply to graphs satisfying certain natural structural conditions, which we formalise in the following definition.

\begin{definition} \label{def:surpressed}
Let $G$ be an $n$-vertex graph.
For $\alpha>0$, we say that a set $S\subseteq V(G)$ is \emph{$\alpha$-thin} in $G$ if 
\[|\del_G(S)|\coloneqq e_G(S,V(G)\setminus S)<\alpha |S|.\]
For $D>0$, we say that a set $S\subseteq V(G)$ is \emph{$D$-loaded} in $G$ if 
\[e_G(S)>D|S|.\]
We say that $G$ is an \emph{$(\alpha,D)$-spreader graph} if it satisfies the following three properties:
\begin{enumerate}[label = $(\mathrm{S}\arabic*)$]
\item \label{item:few suppressed} For all $(\log n)^{1/5}\le k \le (1-1/D^2) n$, 
the number of\/ $G$-connected $\alpha$-thin sets $S\subseteq V(G)$ with $|S|=k$ is less than $n\nume^{-\sqrt{k}}$.
\item \label{item:few loaded} For all $(\log n)^{1/5}\le k \le (1-1/D^2) n$, the number of\/ $G$-connected $\alpha^{-1}$-loaded sets $S\subseteq V(G)$ with $|S|=k$ is less than $n\nume^{-\sqrt{k}}$.
\item \label{item: no dense set} No set $S\subseteq V(G)$ with $|S|\geq \alpha n$ is $D$-loaded in $G$.
\end{enumerate}
\end{definition}

Note that, for $k>\log^2n$, one has that $n\nume^{-\sqrt{k}}<1$, and thus the conditions on an $n$-vertex graph $G$ being an $(\alpha,D)$-spreader graph guarantee that there are no $G$-connected vertex subsets of size between $\log^2n$ and $(1-1/D^2) n$ that have too few edges leaving the set \ref{item:few suppressed} or too many edges contained inside the set \ref{item:few loaded}.
These pseudo-random conditions on expansion and  edge distribution arise naturally in the context of random graph models. 
Indeed, the density of a random graph within any vertex set and across any vertex partition is expected to be the same as the density of the whole graph, and concentration inequalities in conjunction with union bounds can be used to derive the non-existence of such bad connected vertex sets with high probability. 
Moreover, the conditions of Definition~\ref{def:surpressed} bound the number of bad vertex sets of size between $(\log n)^{1/5}$ and $\log^2n$, with exponential decay as one can expect from concentration inequalities on binomial random variables.
In the context of the current work, the conditions on spreader graphs will guarantee that all bottlenecks are small and they are scarce in the graph.

To digest the notion of spreader graphs, one can think of $\alpha>0$ as an arbitrarily small constant and $D$ as arbitrarily large.
The parameter $\alpha>0$ controls conditions \ref{item:few suppressed} and \ref{item:few loaded} in the sense that, as $\alpha$ shrinks, these conditions become easier to satisfy and thus the definition of spreader graphs captures more graphs. 
Similarly, the parameter $D$ controls \ref{item: no dense set} and imposes in particular that the spreader graphs are sparse with bounded average degree.
It should be noted that, due to $D$ appearing in \ref{item:few suppressed} and \ref{item:few loaded} and $\alpha$ appearing in \ref{item: no dense set}, our definition is not actually monotone in these parameters.
This is a technical subtlety that is needed in our proof to guarantee a trade-off between the conditions. 
However, in all applications, the restraints given by $D$ in \ref{item:few suppressed} and \ref{item:few loaded} and $\alpha$  in \ref{item: no dense set} are never critical, as we have very good control over the edge distribution in all linear sets. 

We also remark that the constant $1/5$ could be replaced by any constant $\zeta<1/4$.
Indeed, for sets smaller than $(\log n)^{\zeta}$, we impose no restriction.
The point is that, as we will focus on connected spreader graphs $G$, even if a small set is an extreme bottleneck, the  random walk will not get stuck there for too long before exploring the set enough to escape. 
In our proof, these bottlenecks contribute a factor $(\log n)^{4\zeta}$ (due to a connected set of size $s$ having \emph{conductance} at least $1/s^2$ and Theorem~\ref{thm:FR} giving a quadratic dependence on conductance, see Section~\ref{sec:conductance} for details), hence a choice of $\zeta<1/4$ guarantees that this contribution is negligible. It may be possible to replace this constraint of $1/4$ by $1/2$ but not beyond this.

Finally, we remark that the constants $\alpha$ and $D$ could be replaced with functions that depend on $n$ and the definition of spreader graphs could be adjusted so that our main theorem would still give bounds on average mixing times.
However, as our focus is on sparse graphs with constant average degree, we do not pursue this direction here.

\begin{remark}
The definition of $(\alpha,D)$-spreader graphs bears resemblance with that of \emph{$\alpha$-AN graphs} (or \emph{$\alpha$-decorated expanders}) introduced in~\cite{BKW14}.
An $\alpha$-AN graph $G$ is defined in terms of the existence of an expander subgraph $B$ whose complement is formed by a small number of small components, similar to what can be deduced from \ref{item:few suppressed}-\ref{item: no dense set}, and additionally requiring that not too many components of $G-B$ are connected to each $v\in V(B)$.
The backbone of the main result in~\cite{BKW14} is to show that random walks on $\alpha$-AN graphs  mix in $O(\log^2 n)$ steps.
\end{remark}

Our main theorem provides a tool to prove logarithmic average mixing time for $(\alpha,D)$-spreader graphs.

\begin{theorem}\label{thm:main}
For all $\eps>0$, $D\geq 4$ and $0<\alpha<1/D^2$, there exists a $C>0$ such that the following holds for all $n$ sufficiently large.
Suppose $G$ is an $n$-vertex connected $(\alpha,D)$-spreader graph.
Then, 
\[\bar t_{\mix}(G,\eps)\leq C \log n.
\]
\end{theorem}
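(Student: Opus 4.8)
The plan is to compare, for all but $o(n)$ of the starting vertices, the lazy walk on $G$ with a fast‑mixing walk obtained by deleting the (few, small) bottlenecks of $G$, and to account for the remaining vertices trivially. Write $\bar\mu_t\coloneqq\tfrac1n\sum_{u\in V(G)}\mu_0^uP_G^t$ for the law at time $t$ of the walk from a uniformly random vertex; since $\bar t_{\mix}(G,\varepsilon)$ is the least $t$ with $\tfrac1n\sum_u d_{\mathrm{TV}}(\mu_0^uP_G^t,\pi_G)\le\varepsilon$ and $d_{\mathrm{TV}}\le 1$, it suffices to exhibit a set $\cG\sub V(G)$ with $|V(G)\sm\cG|=o(n)$ such that, for $u\in\cG$, the walk from $u$ lies within $1/8$ of $\pi_G$ at some time below $C\log n$. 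The one fact that makes the average case genuinely easier than the worst case, and which I would record first, is the pointwise bound $\bar\mu_t\le 2D\,\pi_G$: condition \ref{item: no dense set} applied to $S=V(G)$ gives $\mathrm{vol}(G)\le 2Dn$, and then reversibility yields $\bar\mu_t(v)/\pi_G(v)=\tfrac1n\Exp_v[1/\pi_G(X_t)]\le\mathrm{vol}(G)/n\le 2D$ for every vertex $v$ and time $t$.

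Second, I would isolate the bottlenecks. Let $\cB$ be the union of all $G$‑connected $\alpha$‑thin sets of size in $[(\log n)^{1/5},n/2]$. By \ref{item:few suppressed}, every such set has size below $(\log n)^2$ (for larger sizes the count $n\nume^{-\sqrt k}$ is below $1$), and summing $n\nume^{-\sqrt k}$ over $k\ge(\log n)^{1/5}$ leaves only $o\big(n/(\log n)^4\big)$ of them, each of volume $O\big((\log n)^4\big)$; hence $|\cB|=o(n)$, $\mathrm{vol}(\cB)=o(\mathrm{vol}(G))$ and $|\del_G(\cB)|\le\sum_S|\del_G(S)|=o(n)$. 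In particular $\pi_G(\cB)=o(1)$, so $\bar\mu_t(\cB)\le 2D\pi_G(\cB)=o(1)$ uniformly in $t$. On $\cC\coloneqq V(G)\sm\cB$ the walk should behave like an expander: $\cC$ contains no $G$‑connected $\alpha$‑thin set of size $\ge(\log n)^{1/5}$, while \ref{item:few loaded} and \ref{item: no dense set} bound internal edge‑densities (hence volumes); a routine isoperimetric reduction — the conductance minimiser may be taken connected, and either of size $\le(\log n)^2$ or with co‑small complement — then gives a constant $\phi_0=\phi_0(\alpha,D)>0$ lower‑bounding the conductance of the trace chain induced by $P_G$ on $\cC$ on all sets of size $\ge(\log n)^{1/5}$, the flow rerouted through $\cB$ costing only the negligible $|\del_G(\cB)|=o(n)$ edges.

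With this, I would take $\cG$ to consist of those $u\notin\cB$ with $\Exp_u\!\big[\#\{t\le C\log n:X_t\in\cB\}\big]\le s_n$ for a suitable $s_n=o(\log n)$; since the $\tfrac1n$‑average of this expectation equals $\sum_{t\le C\log n}\bar\mu_t(\cB)=o(\log n)$, Markov's inequality gives $|V(G)\sm\cG|=o(n)$. Fix $u\in\cG$. With probability $1-o(1)$ the walk spends $o(\log n)$ of its first $C\log n$ steps in $\cB$; conditioning on this, the remaining $(1-o(1))C\log n$ steps are steps of the trace chain of $P_G$ on $\cC$, which is reversible with respect to $\pi_G|_\cC$. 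Because $\cC$ has no $\alpha$‑thin connected set of size $\ge(\log n)^{1/5}$, a conductance‑profile bound applies: the scales below $(\log n)^{1/5}$ contribute $o(\log n)$ — this is exactly where the exponent $1/5$ in \ref{item:few suppressed} is used, so that the polynomial escape‑time bound $m^{O(1)}$ on sets of size $m<(\log n)^{1/5}$ stays $o(\log n)$ — and the scales above contribute $O(\phi_0^{-2}\log n)$, so the trace chain mixes in $O(\log n)$ steps from $u\in\cC$. Taking $C$ large, it follows that for $u\in\cG$ and every $t$ in the window $W\coloneqq[\tfrac12 C\log n,\,C\log n]$ one has $d_{\mathrm{TV}}(\mu_0^uP_G^t,\pi_G)\le\tfrac1{100}+o(1)+\Pr_u[X_t\in\cB]$. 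Averaging over $t\in W$, using $\tfrac1{|W|}\sum_{t\in W}\Pr_u[X_t\in\cB]\le s_n/|W|=o(1)$ for $u\in\cG$ and the bound $1$ for $u\notin\cG$, gives $\tfrac1{|W|}\sum_{t\in W}\tfrac1n\sum_u d_{\mathrm{TV}}(\mu_0^uP_G^t,\pi_G)<\tfrac14$; hence some $t\le C\log n$ satisfies $\tfrac1n\sum_u d_{\mathrm{TV}}(\mu_0^uP_G^t,\pi_G)<\tfrac14$, i.e.\ $\bar t_{\mix}(G)\le C\log n$.

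I expect the heart of the matter to be bounding the time the walk wastes inside $\cB$ before it has mixed on the core. A single bottleneck can be a bare path of length $\Theta((\log n)^2)$, a single excursion along which lasts $\Theta((\log n)^4)$ steps — far beyond the budget — and nothing prevents the walk from attempting many such excursions within $C\log n$ steps, so a worst‑case analysis over starting vertices cannot succeed. The way around it is precisely the averaged input $\bar\mu_t(\cB)=o(1)$, a consequence of $\bar\mu_t\le 2D\pi_G$, together with Markov's inequality, which let one conclude that \emph{most} starting vertices barely interact with $\cB$. The remaining, more technical effort lies exactly where Definition~\ref{def:surpressed} offers no direct control: establishing the conductance profile of the trace chain on $\cC$ (in particular handling sets of small size but large volume), and controlling the burn‑in through the scales below $(\log n)^{1/5}$.
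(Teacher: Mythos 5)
Your overall architecture mirrors the paper's (excise the small bottlenecks, prove fast worst-case mixing of the reduced chain via a conductance profile, show a typical start barely interacts with the excised part, then transfer back), and your averaged heat-kernel bound $\bar\mu_t\le 2D\,\pi_G$ is an attractive substitute for the paper's use of the First Visit Time Lemma (Theorem~\ref{thm:MQS}). However, there is a genuine gap at the decomposition step: you remove only $\alpha$-thin connected sets, but in an $(\alpha,D)$-spreader graph the intermediate-size bottlenecks need not be thin. Condition \ref{item:few loaded} only bounds the \emph{number} of $\alpha^{-1}$-loaded connected sets of each size $k\in[(\log n)^{1/5},(\log n)^2]$ (it allows up to $n\nume^{-\sqrt{k}}$ of them), and \ref{item: no dense set} says nothing below size $\alpha n$; so your assertion that \ref{item:few loaded} and \ref{item: no dense set} ``bound internal edge-densities'' of sets in the core conflates ``few loaded sets'' with ``no loaded sets''. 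Concretely, a clique on $m=\Theta(\log n)$ vertices attached to the rest of $G$ by $\lceil\alpha m\rceil$ edges is compatible with \ref{item:few suppressed}--\ref{item: no dense set}, is not $\alpha$-thin (nor is any of its subsets), and hence lies entirely inside your core $\cC$; its conductance, in $G$ and in the trace chain on $\cC$, is of order $|\del_G(S)|/\deg_G(S)=\Theta(\alpha/\log n)$, not the constant $\phi_0(\alpha,D)$ you claim. At the scales $\pi\asymp(\log n)^2/n$ the Fountoulakis--Reed profile then contributes $\Theta((\log n)^2)$ rather than $O(\phi_0^{-2}\log n)$, so your budget does not close. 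This is exactly why the paper defines badness by the ratio $|\del_G(S)|/\deg_G(S)<\alpha^2/4$ and shows (Lemma~\ref{lem:simplebad}) that bad sets are thin \emph{or} loaded, removing both kinds in \eqref{eq: U def}; the same mechanism is also what you are silently using when you assert, without proof, that each removed component has volume $O((\log n)^4)$ -- that bound requires embedding each bad set in a connected superset of size $(\log n)^2$, which cannot be loaded by \ref{item:few loaded}, as in the proof of Lemma~\ref{lem:smallU}.

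If you repair the decomposition accordingly (take $\cB$ to be the union of all $\gamma$-bad connected sets of size in $[(\log n)^{1/5},(1-1/D^2)n]$, with $\gamma=\alpha^2/4$), two further points need care: sets whose complement in the core is small must be handled through \ref{item: no dense set} (the paper's Claim~\ref{claim:conductance3} inside Lemma~\ref{lem:G*conductance}), and your step ``conditioning on few visits to $\cB$, the remaining steps are steps of the trace chain'' needs to be made into an honest coupling statement -- the paper avoids this by contracting the bad components into a multigraph $G^*$ and coupling the two walks until the hitting time of $U$. With those fixes your route should work, and your replacement of the First Visit Time Lemma by the reversibility bound $\bar\mu_t\le 2D\pi_G$ together with Markov's inequality would be a genuinely simpler way to show that most starting vertices spend negligible time in the bad set; the price is that you must then argue the trace/excursion comparison, whereas the paper gets to use the cleaner event that $U$ is not hit at all before time $(\log n)^2$.
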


We believe that in many cases, as in our two applications below, this theorem can be used to quickly derive optimal bounds for average mixing times in settings where worst-case mixing times are established via conductance bounds.

The proof of Theorem~\ref{thm:main} bears some similarities with the proof in~\cite{BLPS18}.
Both use the idea of contracting badly connected sets and coupling the random walks in the original and the contracted graphs.
However, our proof is conceptually simpler as it does not use the anatomy of the giant component~\cite{DLP14}, a powerful description of the largest component in the supercritical regime.
Instead, we rely on the Fountoulakis-Reed bound for mixing~\cite{FR07} and recent progress on hitting time lemmas~\cite{MQS21}.

\subsection{Application 1: Smoothed analysis on connected graphs} \label{sec:app 1}

The idea of studying the effect of random perturbations on a given structure arose naturally in several distinct settings.
In theoretical computer science, Spielman and Teng~\cite{ST04} (see also \cite{ST09}) introduced the notion of \emph{smoothed analysis} of algorithms.
By randomly perturbing an input to an algorithm, they could interpolate between a worst-time case analysis and an average case analysis, leading to a better understanding of the practical performance of algorithms on real life instances.
This has been hugely influential, leading to the study of smoothed analysis in a host of different settings, including numerical analysis~\cite{SST06,TV07}, satisfiability~\cite{F07,CFFK09}, data clustering~\cite{AMR11}, multilinear algebra~\cite{BCMV14} and machine learning~\cite{KSS09}.
Almost simultaneously, in graph theory, Bohman, Frieze and Martin~\cite{BFM03} introduced the model of randomly perturbed graphs which, as with smoothed analysis, allows one to understand the interplay between an extremal and probabilistic viewpoint.
The majority of work on the subject has focused on dense graphs~\cite{BHKMPP19,BMPP20,HMT21}.

In the context of random walk mixing, it can be seen that small random perturbations cannot speed up the mixing time on dense graphs significantly.
Indeed, the canonical examples leading to torpid mixing (e.g., two cliques connected by a long path) are robust with respect to that property. 
Smoothed analysis of sparse graphs was introduced by Krivelevich, Reichman and Samotij~\cite{KRS15}.
Here one starts with a connected graph of bounded degree (in fact, bounded degeneracy often suffices) and applies a small random perturbation by adding a copy of the binomial random graph $R\sim G(n,{\delta}/{n})$ for small $\delta>0$.
Although this perturbation is very slight, they showed that it greatly improves the expansion properties of the graph.
A graph $G$ is said to be $\Delta$-degenerate if there is some ordering of the vertices of $G$ such that each vertex has at most $\Delta$ neighbours in $G$ that precede it in the ordering.
To be precise, Krivelevich, Reichman and Samotij proved that, for any $\Delta\in \NN$ and $\delta>0$, if\/ $G$ is an $n$-vertex $\Delta$-degenerate connected graph and $R\sim G(n,{\delta}/{n})$, then $G'\coloneqq G\cup R$ a.a.s.\ satisfies that $t_\mix(G')=O(\log ^2n)$.
By considering, for example, a path on $n$ vertices, which has mixing time $\Omega(n^2)$, we see a vast improvement after a slight random perturbation.
We also note that the result is tight on such examples, as the randomly perturbed path a.a.s.\ contains bare paths of length $\Omega(\log n)$.

Our first application of Theorem~\ref{thm:main} shows that we can improve the mixing time yet further in this model by starting from a uniformly chosen vertex, as in this case we avoid the small bottlenecks that remain if the initial graph had poor expansion. 

\begin{theorem} \label{thm:smoothed}
For any\/ $\eps,\delta>0$ and\/ $\Delta\in \NN$, there exists a\/ $C>0$ such that the following holds.
Let\/ $G$ be an\/ $n$-vertex\/ $\Delta$-degenerate connected graph, choose\/ $R\sim G(n,{\delta}/{n})$ and let\/ $G'\coloneqq G\cup R$.
Then, a.a.s.
\[\bar t_{\mix}(G',\eps)\leq C\log n.\]
\end{theorem}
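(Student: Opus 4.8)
The plan is to deduce Theorem~\ref{thm:smoothed} from Theorem~\ref{thm:main} by showing that, for a suitable choice of constants $D=D(\eps,\Delta)\ge 4$ and $0<\alpha=\alpha(\eps,\Delta)<1/D^2$ (first fix $D$ large, then $\alpha$ small), the perturbed graph $G'=G\cup R$ is a.a.s.\ a connected $(\alpha,D)$-spreader graph; Theorem~\ref{thm:main} then immediately gives $\bar t_{\mix}(G')\le C\log n$ for $C=C(\alpha,D)$. Connectivity is clear since $G\subseteq G'$. I will use the following standard a.a.s.\ facts about $R\sim G(n,\eps/n)$: $e(R)\le\eps n$; the $d_0$-core of $R$ is empty for some $d_0=d_0(\eps)$, so $G'$ is $(\Delta+d_0)$-degenerate and hence the number of $G'$-connected sets of size $k$ through any fixed vertex is at most $\gamma^{k}$ for some $\gamma=\gamma(\eps,\Delta)$; and every vertex set of size at least $(\log n)^{1/5}$ spans at most $L|S|$ edges of $R$ for some $L=L(\eps)$.

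Conditions~\ref{item:few loaded} and~\ref{item: no dense set} are the easy ones. For any $S$ one has $e_{G'}(S)\le\Delta|S|+e_R(S)$, so being $\alpha^{-1}$-loaded forces $e_R(S)>(\alpha^{-1}-\Delta)|S|\ge\tfrac12\alpha^{-1}|S|$ (taking $\alpha\le1/(2\Delta)$) and being $D$-loaded forces $e_R(S)>(D-\Delta)|S|$. Since $e_R(S)$ is binomial with mean at most $\eps|S|^{2}/(2n)\le\eps|S|/2$, both are deep upper-tail events, so a Chernoff bound, a union bound over the $\binom nk$ sets of each size $k$, and a summation over $k$ show that a.a.s.\ \emph{no} set of size at least $(\log n)^{1/5}$ is $\alpha^{-1}$-loaded (this is stronger than~\ref{item:few loaded}) and no set of size at least $\alpha n$ is $D$-loaded, i.e.~\ref{item: no dense set}, provided $D\ge\Delta+2$ is large enough in terms of $\eps,\Delta$ and $\alpha$ is small enough in terms of $D$. (The convergence of $\sum_{k\ge(\log n)^{1/5}}\nume^{-ck}$ to $0$ handles the bookkeeping across $k$.)

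The crux is condition~\ref{item:few suppressed}. Let $S$ be $G'$-connected with $|S|=k\le(1-1/D^{2})n$ and $|\del_{G'}(S)|<\alpha k$. Since $G$ is connected and $S\ne V(G)$, every connected component of $G[S]$ sends at least one edge to $V(G)\sm S$; hence the number $c$ of these components satisfies $c\le|\del_G(S)|\le|\del_{G'}(S)|<\alpha k$, and the $G$-edge-boundaries of the components sum to less than $\alpha k$. The key deterministic input is a counting lemma: in an $n$-vertex $\Delta$-degenerate connected graph the number of ways to choose $c$ pairwise disjoint connected sets of total size $k$ and total edge-boundary less than $\alpha k$ is at most $n^{c}c(\alpha,\Delta)^{k}/c!$ with $c(\alpha,\Delta)\to1$ as $\alpha\to0$; the point is that small relative edge-boundary in a bounded-degeneracy graph heavily constrains the shape of a connected set, which eliminates the naive $(\nume\Delta)^{k}$-type count of all connected sets that would otherwise be fatal here. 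The key probabilistic input is that, given such a choice of components, the events ``$R$ joins them into a connected set'' and ``$|\del_{G'}(S)|<\alpha k$'' depend on disjoint families of potential $R$-edges (both endpoints in $S$ versus exactly one in $S$), hence are independent; the former has probability at most $(\eps/n)^{c-1}k^{c-2}\prod_i|S_i|$ by a weighted Cayley bound, and the latter at most $\Pr[\mathrm{Bin}(k(n-k),\eps/n)<\alpha k]\le\nume^{-\eps k/(8D^{2})}$ once $\alpha<\eps/(2D^{2})$, using $n-k\ge n/D^{2}$. Multiplying and summing over $1\le c<\alpha k$ and over component sizes, for $\alpha$ small enough in terms of $\eps,D,\Delta$ the factor $\nume^{-\eps k/(8D^{2})}$ dominates $c(\alpha,\Delta)^{k}$, giving $\Exp\bigl[\#\{G'\text{-connected }\alpha\text{-thin }S:|S|=k\}\bigr]\le n\,\nume^{-\eps k/(16D^{2})}$. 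Since $\sum_{k\ge(\log n)^{1/5}}\nume^{-\eps k/(16D^{2})+\sqrt k}\to0$, Markov's inequality and a union bound over $k$ yield~\ref{item:few suppressed} a.a.s., completing the verification that $G'$ is a.a.s.\ an $(\alpha,D)$-spreader graph.

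The main obstacle is the deterministic counting lemma and, in particular, its multi-component refinement: bounding the number of connected sets (and disjoint tuples of connected sets) of small relative edge-boundary in an arbitrary $\Delta$-degenerate graph by $c(\alpha,\Delta)^{k}$ with $c(\alpha,\Delta)\to1$ as $\alpha\to0$. One must be careful that a single ``fat'' small component cannot spoil the bound, which works out because such a component has size at most $\alpha k$ and so contributes only a $(\Delta^{\alpha})^{k}$ factor. Everything else is routine concentration and union bounds.
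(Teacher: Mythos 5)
Your proposal follows essentially the same route as the paper: fix $D$ large and $\alpha$ small, show $G'=G\cup R$ is a.a.s.\ a connected $(\alpha,D)$-spreader graph and invoke Theorem~\ref{thm:main}, with the loaded conditions \ref{item:few loaded} and \ref{item: no dense set} handled by binomial upper tails and the thin condition \ref{item:few suppressed} by a first-moment bound over the components of $G[S]$ (roots, counting connected sets of small relative edge-boundary in a $\Delta$-degenerate graph, a Cayley-type factor $(\eps/n)^{c-1}$ for $R$ connecting the components, and the lower tail of $\mathrm{Bin}(k(n-k),\eps/n)$), which is exactly the computation the paper imports from Krivelevich, Reichman and Samotij~\cite{KRS15}. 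The one ingredient you flag as the main obstacle --- the $\exp\bigl(O(\alpha\log(1/\alpha))k\bigr)$-type count of connected sets with small relative boundary in a bounded-degeneracy graph --- is precisely what~\cite{KRS15} provides (the paper cites it rather than reproving it), so your argument is complete modulo that citation; just note that your unused preliminary claim that bounded degeneracy of $G'$ yields at most $\gamma^{k}$ connected $k$-sets through a fixed vertex is false in general (take $G$ a star), though nothing in your actual proof relies on it.
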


\begin{remark}\label{rem:tightness}
Theorem~\ref{thm:smoothed} is tight, up to the constant factor $C$, for \emph{all} graphs with maximum degree\/ $\Delta$.
Indeed, this follows from the fact that $N^k(v)=O( (2\bar d)^k\log n)$ for \emph{all} vertices $v\in V(G')$, where $N^k(v)$ denotes the number of vertices that are at distance at most $k$ from $v$ in $G'$ and $\bar d\coloneqq\Delta +\delta$ is an upper bound on the average degree in $G'$.
Such an upper bound can be shown easily by induction, see for example~\cite{CL01}, and setting $k=c\log n$ for $c>0$ sufficiently small shows that at least half of the vertices cannot be reached from $v$ in $k$ steps and hence $\bar t_\mix(G',\eps)\geq k$.
Nonetheless, the converse of the inequality in Theorem~\ref{thm:smoothed} is not true for all\/ $\Delta$-degenerate graphs.
Consider for instance a star: it is\/ $1$-degenerate, but the mixing time of the randomly perturbed star is\/ $O(1)$ as we mix in the step after visiting the centre of the star for the first time.
\end{remark}
 
Some time before the systematic study of random perturbations in the combinatorial and theoretical computer science communities discussed above, the notion appeared in physics literature with the study of so-called \emph{small-world networks}.
Here we will concentrate on a model introduced by Newman and Watts~\cite{NW99a,NW99b} where, for some fixed $k\in \NN$, $\delta>0$ and $n\in \NN$ large, one starts with $n$-vertices of the graph ordered as $v_1,\ldots, v_n$, adds all edges $v_iv_j$ for which $i+1\leq j \leq i+k$ (with addition modulo $n$), and then adds all remaining edges independently with probability $p=\delta/n$.
We denote the resulting random graph as $H_{n,k,\delta}$. 
It is easy to see that this graph fits into the framework of Krivelevich, Reichman and Samotij~\cite{KRS15}, and so their result implies that, for any $k\in \NN$ and $\delta>0$, the Newman-Watts small world network $H_{n,k,\delta}$ a.a.s.\ satisfies $t_\mix(H_{n,k,\delta})=O(\log ^2n)$.
In fact, this was established before their work by Addario-Berry and Lei~\cite{ABL15}, improving on a previous bound of $O(\log ^3n)$ due to Durrett~\cite{D07}.
Here, as a direct consequence of Theorem~\ref{thm:smoothed}, we conclude that the average mixing time on the Newman-Watts small world network is of order $O(\log n)$. 

\begin{corollary} \label{cor:NW small world}
For all\/ $k\in \NN$ and\/ $\eps,\delta>0$, there exists a\/ $C>0$ such that the following holds.
The Newman-Watts small world network\/ $H_{n,k,\delta}$ a.a.s.\ satisfies 
\[\bar t_\mix(H_{n,k,\delta},\eps)\leq C\log n. \] 
\end{corollary}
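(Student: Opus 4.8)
The plan is to obtain \cref{cor:NW small world} as an immediate consequence of \cref{thm:smoothed}, so the only real work is to recognise the Newman--Watts network as an instance of the randomly perturbed graphs considered there. The deterministic part of $H_{n,k,\eps}$ --- the graph on $v_1,\dots,v_n$ in which $v_i$ is joined to $v_j$ whenever $1\le |i-j|\le k$ (indices modulo $n$) --- is precisely the $k$-th power $C_n^k$ of the $n$-cycle. For $n>2k+1$ this graph is connected (it contains the Hamilton cycle $v_1v_2\cdots v_nv_1$) and $2k$-regular, hence in particular $\Delta$-degenerate with $\Delta=2k$. So $C_n^k$ satisfies the hypotheses of \cref{thm:smoothed}.

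The one point that needs care is that in the Newman--Watts model the random edges are sampled only among the pairs \emph{not} already present in $C_n^k$, whereas \cref{thm:smoothed} samples $R\sim G(n,\eps/n)$ on the full vertex set and forms $G'\coloneqq C_n^k\cup R$. I would reconcile the two by a trivial coupling. Write $E(R)=\bigl(E(R)\cap E(C_n^k)\bigr)\cup\bigl(E(R)\setminus E(C_n^k)\bigr)$; the first part is absorbed into $E(C_n^k)$ when we take the union, while $E(R)\setminus E(C_n^k)$ is exactly a random subset of $\binom{[n]}{2}\setminus E(C_n^k)$ in which each non-edge of $C_n^k$ is included independently with probability $\eps/n$. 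Hence $E(G')=E(C_n^k)\cup\bigl(E(R)\setminus E(C_n^k)\bigr)$ has exactly the law of $E(H_{n,k,\eps})$, i.e.\ $G'$ and $H_{n,k,\eps}$ are equal in distribution.

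Combining these two observations finishes the proof: applying \cref{thm:smoothed} with the connected $2k$-degenerate graph $C_n^k$ and perturbation $R\sim G(n,\eps/n)$ yields a constant $C=C(k,\eps)>0$ with $\bar t_\mix(C_n^k\cup R)\le C\log n$ a.a.s., and by the coupling above this says exactly that $\bar t_\mix(H_{n,k,\eps})\le C\log n$ a.a.s. I do not expect any genuine obstacle here: the entire content of the corollary is already packaged in \cref{thm:smoothed} (and, through it, in \cref{thm:main}), and the only thing to get right is the bookkeeping that identifies the deterministic backbone as $C_n^k$ and matches the two descriptions of the random perturbation.
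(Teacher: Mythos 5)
Your proposal is correct and follows essentially the same route as the paper, which deduces the corollary directly from \cref{thm:smoothed} by viewing the deterministic backbone of $H_{n,k,\eps}$ as the connected, bounded-degree (hence bounded-degeneracy) graph $C_n^k$. Your extra remark reconciling the two sampling conventions (random edges only on non-edges of $C_n^k$ versus $R\sim G(n,\eps/n)$ on all pairs) is a fine bit of bookkeeping that the paper leaves implicit.
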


\subsection{Application 2: Giant components in random subgraphs of expanders} \label{sec:app 2}

For $p\in [0,1]$ and a graph $G$, we define $G_p$ to be the graph with the same vertex set where each edge of $G$ is retained in $G_p$ independently with probability $p$.
The graph $G$ is called the host graph, and the random subgraph $G_p$, the $p$-percolated one.
Percolation on graphs is a well-established topic in probability theory.
Most classically, if the host graph is the complete graph on $n$ vertices $K_n$, then its $p$-percolated subgraph is the Erd\H os-R\'enyi graph $G(n,p)$.
For any graph $G$, let $L_1(G)$ denote a largest connected component in $G$ and let $\ell_1(G)$ denote its order. 
In their seminal paper~\cite{ER60}, Erd\H os and R\'enyi proved a phase transition for $\ell_1(G(n,p))$.
Namely, writing $p=c/n$ for some constant $c$, if $c<1$ then a.a.s.\ $\ell_1(G(n,p))=O(\log n)$, while if $c>1$ then a.a.s.\ $\ell_1(G(n,p))=\Omega(n)$ and the largest component, which is the unique component of linear size, is known as the \emph{giant component}.

A central question in random graph theory is whether other host graphs exhibit the same phenomenon~\cite{AKS82}.
One quickly observes that, in order for $G_p$ to have a sharp threshold for the component structure, the host graph $G$ should satisfy some additional properties.
A natural property to consider is the pseudo-random notion of \emph{expansion}.
There is a strong connection between expansion and the graph spectrum.
Given the eigenvalues of the adjacency matrix of a $d$-regular graph $G$, say $d=\lambda_1\geq \lambda_2\geq \ldots \geq \lambda _n$, we let $\lambda(G)\coloneqq\max\{|\lambda_2|,|\lambda_n|\}$ be the second largest eigenvalue.
We then define an \emph{$(n, d, \lambda)$-graph} to be a $d$-regular graph $G$ on $n$ vertices with $\lambda(G)=\lambda$.
When $\lambda$ is small compared to $d$, an $(n,d,\lambda)$-graph is said to be an \emph{expander} and it enjoys many of the same properties as a random graph with the same density.
We refer the reader to the excellent survey of Krivelevich and Sudakov~\cite{KS06} on the subject. 

In terms of percolation, Frieze, Krivelevich and Martin~\cite{FKM04} proved that, if $G$ is an $(n,d,\lambda)$-graph with $\lambda=o(d)$, then $\ell_1(G_p)$ undergoes a phase transition at $p=1/d$.
They obtained the following description of the supercritical regime: for $\delta>0$ and $p=(1+\delta)/d$, and provided that $\lambda\leq\delta^4d$, a.a.s.\ $\ell_1(G_p)\sim y(\delta)n$ for some $y(\delta)\in (0,1)$.
Moreover, as in $G(n,p)$, the largest component $L_1(G_p)$ is a.a.s.\ the unique component of linear size.
Very recently, Diskin and Krivelevich~\cite{DK22} studied the mixing time of percolated $(n, d, \lambda)$-graphs.
More precisely, they showed that, in the supercritical regime, there exists $C=C(\delta)$ such that a.a.s.\ $t_{\mix}(L_1(G_p))\leq C \log ^2n$.
This is indeed optimal for some graphs, in particular for Erd\H os-R\'enyi random graphs~\cite{FR08,BKW14}, as discussed above.

Our next application of our main result shows that for percolated pseudo-random graphs the average mixing time is logarithmic.

\begin{theorem} \label{thm:expanders}
For all $\delta>0$ sufficiently small and all $\eps>0$, 
there exists a $C>0$ such that, if $p=(1+\delta)/{d}$ and $G$ is an $(n,d,\lambda)$-graph with $\lambda\leq\delta^4d$, then a.a.s.
\[\bar t_{\mix}(L_1(G_p),\eps)\le C \log n.\]
\end{theorem}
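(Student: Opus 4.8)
The plan is to derive the result from Theorem~\ref{thm:main} by showing that, a.a.s., the giant component $L_1(G_p)$ of a supercritically percolated $(n,d,\lambda)$-graph is an $(\alpha,D)$-spreader graph for a suitable choice of $\alpha=\alpha(\eps)$ and $D=D(\eps)$ (with $D\geq 4$ and $\alpha<1/D^2$), and that it is connected by definition. Writing $m\coloneqq\ell_1(G_p)$ for its order, since $m=\Theta(n)$ a.a.s., replacing $n$ by $m$ in the spreader conditions only changes constants, so it suffices to verify conditions \ref{item:few suppressed}--\ref{item: no dense set} with $m$ in place of $n$. The combinatorial heart of the argument is a union bound over connected vertex subsets $S$ of the host graph $G$: for each size $k$, the number of connected subsets of an $(n,d,\lambda)$-graph containing a fixed vertex is at most $(\nume d)^k$ (a standard bound, e.g.\ via spanning trees), so there are at most $n(\nume d)^k$ connected sets of size $k$ in total. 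For each such $S$ we estimate, using the expander mixing lemma, the expected number of edges of $G$ inside $S$ and across $(S,V\setminus S)$, then apply a Chernoff bound to the independent edge retentions to control the corresponding quantities in $G_p$.

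Concretely, for condition \ref{item: no dense set}: by the expander mixing lemma, $e_G(S)\leq \frac{d|S|^2}{2n}+\frac{\lambda|S|}{2}$, so $\Exp[e_{G_p}(S)]\leq p\bigl(\frac{d|S|^2}{2n}+\frac{\lambda|S|}{2}\bigr)$. For $|S|\geq \alpha n$ this is comparable to $pd|S|\cdot\frac{|S|}{n}$; choosing $D$ large enough that $D>\tfrac{(1+\eps)}{2}+o(1)$ and using $\lambda\leq \eps^4 d$ makes the expectation at most, say, $\tfrac{D}{2}|S|$, and a Chernoff bound plus the union bound over the (at most $2^n$, but more carefully $n(\nume d)^{|S|}$) connected sets of each size kills the probability that any such $S$ is $D$-loaded, provided $D$ is chosen after $\alpha$. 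For \ref{item:few loaded}, an $\alpha^{-1}$-loaded set $S$ of size $k$ in $G_p$ has $e_{G_p}(S)>\alpha^{-1}k$, while $\Exp[e_{G_p}(S)]\leq p\bigl(\tfrac{dk^2}{2n}+\tfrac{\lambda k}{2}\bigr)\leq \tfrac{(1+\eps)}{2}\cdot\tfrac{k^2}{n}+\tfrac{(1+\eps)\eps^4}{2}k$, which for $k\leq (1-1/D^2)n$ is at most $k$; taking $\alpha$ small so that $\alpha^{-1}\geq 10$, say, the event is a large deviation, and $\Pr[e_{G_p}(S)>\alpha^{-1}k]\leq \nume^{-c\alpha^{-1}k}$ for some absolute $c>0$, so that $n(\nume d)^k\nume^{-c\alpha^{-1}k}<n\nume^{-\sqrt k}$ once $\alpha^{-1}$ is large compared to $\log(\nume d)$. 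For \ref{item:few suppressed}, an $\alpha$-thin connected set $S$ of size $k$ (with $k\leq (1-1/D^2)n$, so $|V\setminus S|\geq n/D^2$) has few $G_p$-edges leaving it; by the expander mixing lemma $e_G(S,V\setminus S)\geq \frac{d|S||V\setminus S|}{n}-\lambda\sqrt{|S||V\setminus S|}\geq \bigl(\tfrac{1}{D^2}-\eps^4\bigr)d k\geq \tfrac{1}{2D^2}dk$, so $\Exp[e_{G_p}(S,V\setminus S)]\geq p\cdot\tfrac{1}{2D^2}dk=\tfrac{(1+\eps)}{2D^2}k$; a lower-tail Chernoff bound gives $\Pr[e_{G_p}(S,V\setminus S)<\alpha k]\leq \nume^{-c'k/D^2}$ once $\alpha<\tfrac{1}{4D^2}$, and again $n(\nume d)^k\nume^{-c'k/D^2}<n\nume^{-\sqrt k}$ provided we first fix $D$ (hence $\alpha$) and then observe $c'/D^2$ beats $\log(\nume d)$ — here one must be slightly careful about the order of quantifiers, but since $d$ is not bounded this needs attention.

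The main obstacle is precisely this last point: in \ref{item:few suppressed} the decay rate $\nume^{-c'k/D^2}$ of the Chernoff bound competes with the growth rate $(\nume d)^k$ of the number of connected sets, and $d$ may grow with $n$. When $d$ is large, though, $p=(1+\eps)/d$ is small and the isoperimetric surplus of the host expander is proportionally large, so the exponent should still win — but one has to track constants carefully and possibly split into the regimes $d\leq (\log n)^{O(1)}$ and $d$ large, or replace the crude $(\nume d)^k$ count of connected sets by a sharper bound that exploits that a connected set of size $k$ in $G_p$ typically uses only $\Theta(k)$ of its potential $\Theta(dk)$ boundary edges. A clean way around this is to note that we only need the spreader conditions for $k\geq (\log n)^{1/5}$, and for such $k$ a more careful first-moment computation — counting connected subgraphs of $G_p$ (rather than of $G$) with few boundary edges, so that the cost of specifying $S$ is already $p$-discounted — yields the required $n\nume^{-\sqrt k}$ bound uniformly in $d$. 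Modulo this bookkeeping, everything else is a routine application of the expander mixing lemma, Chernoff bounds, and a union bound, followed by invoking Theorem~\ref{thm:main} on $L_1(G_p)$.
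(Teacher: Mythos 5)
Your overall strategy coincides with the paper's: show that $L_1(G_p)$ is a.a.s.\ an $(\alpha,D)$-spreader graph and invoke Theorem~\ref{thm:main}. Your treatment of \ref{item: no dense set} (expander mixing lemma, binomial tail, union bound over all sets of linear size) is essentially the paper's computation, and your ``$p$-discounted'' counting of $G_p$-connected sets (paying $p^{k-1}$ for a spanning tree so that the entropy per vertex is $\log(\nume(1+\eps))+1$ rather than $\log(\nume d)$) is indeed the right fix for the $d$-dependence you flag; note, though, that the same issue already sinks your stated bound for \ref{item:few loaded} (``$\alpha^{-1}$ large compared to $\log(\nume d)$'' is not available, since $\alpha$ must depend on $\eps$ only), and there too the $p$-discounted count is what saves the day.

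The genuine gap is in \ref{item:few suppressed} for sets of linear size. Even after the $p$-discounting, the first moment over $G_p$-connected $\alpha$-thin sets of size $k=\eta n$ has exponent roughly $\bigl(1+\log(1+\eps)\bigr)k-(1+\eps)(1-\eta)k\approx\bigl(\eta-\tfrac{\eps^2}{2}\bigr)k$, so the union bound closes only for $k=O(\eps^2 n)$ and with $\alpha$ taken of order $\eps^2/\log(1/\eps)$ (the margin is only $\Theta(\eps^2)$ per vertex, so your claimed decay rate $\nume^{-c'k/D^2}$ and the choice ``$\alpha<1/(4D^2)$'' do not reflect the actual, much tighter, bookkeeping). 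But \ref{item:few suppressed} must hold up to $k\le(1-1/D^2)|V(L_1)|=\Theta(\eps n)$, and in the range $\Theta(\eps^2 n)\le k\le\Theta(\eps n)$ neither the first-moment bound over connected sets nor a crude $\binom{n}{k}$ union bound works: the entropy exceeds the available lower-tail decay. The paper bridges exactly this range by importing the edge-expansion theorem of Diskin and Krivelevich for \emph{all} linear-sized subsets of the giant (Lemma~\ref{lem:expanderaux}\ref{list:c}, i.e.\ \cite[Theorem~2]{DK22}), applied to $S$ or to its complement in $L_1$ together with the asymptotic size of the giant (Lemma~\ref{lem:expanderaux}\ref{list:a}); this is not a routine Chernoff-plus-union-bound statement, and your proposal neither proves nor cites a substitute for it. So the plan is the right one, but the ``modulo bookkeeping'' step hides a missing ingredient for intermediate and linear set sizes, which in the paper is supplied by the cited results of~\cite{DK22} (and, for small sets, by following their first-moment computation with the correct $\eps$-dependent choice of $\alpha$).
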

Similarly as in Remark~\ref{rem:tightness}, it can be proven that Theorem~\ref{thm:expanders} is tight up to multiplicative constant for \emph{all} $(n,d,\lambda)$-graphs.

As a consequence, for $G=K_n$ we obtain the following.

\begin{corollary} \label{cor:randomgraph}
For all $\delta>0$ sufficiently small and all $\eps>0$, there exists a $C>0$ such that, for $p=(1+\delta)/n$, a.a.s.
\[\bar t_{\mix}(L_1(G(n,p)),\eps)\le C \log n.\]
\end{corollary}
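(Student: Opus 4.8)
The plan is to derive \cref{cor:randomgraph} from \cref{thm:expanders} by taking the host graph to be the complete graph $G = K_n$, for which two elementary observations make everything fit. First, by the definitions of $G_p$ and of $G(n,p)$ we have $(K_n)_p = G(n,p)$, so $L_1((K_n)_p) = L_1(G(n,p))$. Second, $K_n$ is $(n-1)$-regular, and the eigenvalues of its adjacency matrix are $n-1$ (with multiplicity one) and $-1$ (with multiplicity $n-1$), so $K_n$ is an $(n, n-1, 1)$-graph, i.e.\ $d = n-1$ and $\lambda = 1$. For any fixed $\eps > 0$, the spectral hypothesis $\lambda \le \eps^4 d$ of \cref{thm:expanders} then reads $1 \le \eps^4(n-1)$, which holds for all sufficiently large $n$; since we only claim the bound for $n$ large, this is harmless.

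Two discrepancies must then be reconciled: \cref{thm:expanders} is stated for the percolation probability $p = (1+\eps)/d = (1+\eps)/(n-1)$ rather than $(1+\eps)/n$, and for $\eps$ sufficiently small rather than for all $\eps > 0$. For the first, write $(1+\eps)/n = (1+\eps')/(n-1)$ with $\eps' \coloneqq \eps - (1+\eps)/n$; then $\eps/2 \le \eps' \le \eps$ for all large $n$, so our probability equals $(1+\eps')/d$ with $\eps'$ in a fixed compact subinterval of $(0,\eps]$. For the second, recall that the proof of \cref{thm:expanders} fixes spreader parameters $\alpha, D$ (with $D \ge 4$ and $0 < \alpha < 1/D^2$) depending only on $\eps$, shows that $L_1(G_p)$ is a.a.s.\ a connected $(\alpha, D)$-spreader graph, and invokes \cref{thm:main}; the smallness of $\eps$ enters only through the Frieze--Krivelevich--Martin description of the supercritical component, which needs $\lambda \le \eps^4 d$ and is vacuous for $G = K_n$ once $n$ is large. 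One therefore re-runs the spreader verification directly for the giant of $G(n,c/n)$ at every fixed density $c > 1$, equivalently uniformly for $p$ in a range such as $[(1+\eps/2)/(n-1),\, (1+\eps)/(n-1)]$, with $\alpha, D$ chosen for $c = 1+\eps/2$; this yields that $L_1(G(n,(1+\eps)/n))$ is a.a.s.\ a connected $(\alpha,D)$-spreader graph, and \cref{thm:main} gives $\bar t_{\mix}(L_1(G(n,(1+\eps)/n))) \le C\log n$.

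The step demanding the most care is this re-verification of the spreader conditions \ref{item:few suppressed}--\ref{item: no dense set} for $G(n,(1+\eps)/n)$ at a general fixed density. The number of $G$-connected $k$-sets in such a graph is roughly $n(ec)^k$, which far exceeds the target $n\nume^{-\sqrt k}$, so one must use that a typical connected $k$-set has $\Theta(k)$ boundary edges and exploit large-deviation bounds to cut the relevant families (\ref{item:few suppressed} and \ref{item:few loaded}) down to size; and since the expected totals, summed over $k \in [(\log n)^{1/5},\, (1-1/D^2)n]$, still diverge, a plain first-moment union bound does not suffice --- one needs the concentration argument already present in the proof of \cref{thm:expanders}, together with a choice of $\alpha, D$ valid uniformly over the density range. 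Alternatively, to use \cref{thm:expanders} as a black box, for $\eps$ not small one may couple $G(n,(1+\eps_0)/n) \subseteq G(n,(1+\eps)/n)$ with $\eps_0$ small, apply the theorem to the sparser graph, and show by a sprinkling argument that the denser giant a.a.s.\ remains a connected $(\alpha,D)$-spreader graph with room to spare; I would regard that transfer step as the main obstacle on this route.
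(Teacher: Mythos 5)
Your first paragraph is exactly the paper's own derivation: the paper gives no separate proof of \cref{cor:randomgraph}, obtaining it immediately from \cref{thm:expanders} by viewing $K_n$ as an $(n,n-1,1)$-graph, so that $\lambda=1\le\eps^4 d$ for all large $n$, and treating the shift from $(1+\eps)/n$ to $(1+\eps')/(n-1)$ as harmless. Up to that point your argument is correct and identical in approach to the paper's.

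Where you go beyond the paper is the mismatch between ``for all $\eps>0$'' in the corollary and ``$\eps>0$ sufficiently small'' in \cref{thm:expanders}, on which the paper is silent. The concern is legitimate, but your diagnosis of where smallness is used is not accurate: it does not enter only through the spectral hypothesis $\lambda\le\eps^4 d$. It is needed for \cref{lem:expanderaux} itself, whose inputs from Frieze--Krivelevich--Martin and Diskin--Krivelevich (the giant having $(1+o(1))(2\eps+g(\eps))n$ vertices, and the expansion constants $c\eps^2/\log(1/\eps)$ on the ranges $|S|\le\eps^2 n/50$ and $|S|\le 12\eps n/11$) are statements about the percolated graph, proved only for small $\eps$, and it is invoked again at several points in the proof of \cref{thm:expanders} (for instance to ensure $\eps^4<\alpha\eps\le\eta$ and in the treatment of large sets via \ref{list:a} and \ref{list:c}); none of this becomes vacuous for $G=K_n$. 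So for large $\eps$ one genuinely needs what you offer only as a fallback: re-verify \ref{item:few suppressed}--\ref{item: no dense set} for the giant of $G(n,c/n)$ at an arbitrary fixed $c>1$ using Erd\H{o}s--R\'enyi-specific estimates and then invoke \cref{thm:main} (here the paper's own mechanism --- a per-$k$ first-moment bound, Markov's inequality, and a union bound over $k$ --- does suffice; no stronger concentration is required), or else make the sprinkling transfer precise. You sketch both but carry out neither. In summary: for small $\eps$ your proof coincides with the paper's; for general $\eps$ your route is plausible but incomplete, and that incompleteness mirrors a point the paper itself glosses over rather than a defect peculiar to your write-up.
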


By Remark~\ref{rem:other_avg_tmix}, $t^{(U_n)}_{\mix}=O(\log{n})$ a.a.s., where $U_n$ is chosen uniformly at random from $V(G)$. This result aligns with~\cite{BLPS18}, although theirs  is much stronger, showing cut-off for $t^{(U_n)}_{\mix}$ as previously mentioned. 

\subsection{Organisation} \label{sec:organisation}

The rest of this paper is organised as follows.
In \cref{sec:prelims} we introduce all the necessary notation, definitions and tools for our proofs.
We use these to prove Theorem~\ref{thm:main} in \cref{sec:main}.
This section is structured in subsections where we build different tools to be used in our main proof; in particular, we discuss the main ideas of the proof in \cref{sec:overview}.
\Cref{sec:smoothed,sec:expanders} are devoted to proving Theorems~\ref{thm:smoothed} and \ref{thm:expanders}, respectively.
Finally, we discuss some open problems in \cref{sec:conc}.

\section{Preliminaries} \label{sec:prelims}

\subsection{Basic notation}
\label{sec:notation}
 
Let $\mathbb{N}_0=\{0,1,2,\ldots\}$ denote the set of non-negative integers.
If $n$ is a positive integer, we set $[n]\coloneqq\{1,\ldots,n\}$.
Throughout, we will consider both simple graphs and multigraphs.
The word \emph{graph} will refer to simple graphs, that is, each pair of vertices forms at most one edge.
Our multigraphs, which will be allowed to have parallel edges but no loops, will be clearly identified as such.
All our graphs are labelled, so whenever we discuss an $n$-vertex (multi)graph $G$, we implicitly assume that $V(G)=[n]$.
Given a (multi)graph $G=(V,E)$ and disjoint sets $A,B\subseteq V(G)$, we write $E_G(A)$ for the (multi)set of edges of $G$ contained in $A$, and $E_G(A,B)$ for the (multi)set of edges with one endpoint in $A$ and the other in $B$.
We set $e_G(A)\coloneqq|E_G(A)|$ and $e_G(A,B)\coloneqq|E_G(A,B)|$.
If $A=\{a\}$, we write $E_G(a,B)\coloneqq E_G(\{a\},B)$, and similarly in all related notation.
For simplicity, we write $e(G)\coloneqq e_G(V(G))$.
We write $G[A]\coloneqq(A,E_G(A))$.
We say that $A$ is \emph{$G$-connected} if $G[A]$ is connected.
For each vertex $v\in V(G)$, we write $\deg_G(v)\coloneqq e_G(\{v\},V(G)\setminus\{v\})$ for its degree.
We denote $\partial_G(A)\coloneqq E_G(A,V(G)\setminus A)$ and $\deg_G(A)\coloneqq\sum_{v\in A}\deg_G(v)=2e_G(A)+|\del_G(A)|$.

In many of our statements we will consider an $n$-vertex graph satisfying a set of conditions or a conclusion, which are often asymptotic in nature.
This is in fact an abuse of notation.
To be precise, one must consider a sequence $(G_k)_{k\geq 1}$ of graphs on an increasing number of vertices so that the graphs in the sequence satisfy the conditions.
This abuse of notation greatly simplifies the statements, so we will assume it throughout.
(This also includes any asymptotic statements about random graphs.)
For any sequence of graphs $(G_k)_{k\geq 1}$ with $|V(G_k)|\to \infty$, we say that a graph property $\mathcal{P}$ holds asymptotically almost surely (a.a.s.) if $\lim_{k\to\infty}\mathbb{P}[G_k\in\mathcal{P}]=1$.

\subsection{Random walks}\label{sec:conductance}

Given an arbitrary connected multigraph $G$, the lazy random walk over $G$ is a Markov chain on state space $V(G)$ defined by the transition matrix $P_G=(P_G(i,j))_{i,j\in V(G)}$ given by
\begin{equation}\label{equa:randwalkdef}
    P_G(i,j)=\begin{cases}
1/2&\text{if }i=j,\\
e_G(i,j)/(2\deg_G(i))&\text{if }i\neq j.
\end{cases}
\end{equation}
That is, the lazy random walk is a sequence of random variables $(X_t)_{t\geq0}$ with probability distributions $(\mu_t)_{t\geq0}$, respectively, over $V(G)$, where $\mu_0$ is the starting distribution and, for each $t\geq1$, the distribution of $\mu_t$ is obtained from the distribution of $\mu_{t-1}$ as $\mu_t=\mu_{t-1}P_G=\mu_0P_G^t$.
The sequence of distributions thus depends only on $G$ and the starting distribution.
In the special case when there is a vertex $x\in V(G)$ such that $\mu_0(x)=1$, we will write $(\mu_t^x)_{t\geq0}$ to denote the resulting sequence of distributions.

If $G$ is connected, the lazy random walk over $G$ converges to a stationary distribution $\pi_G$ (that is, a distribution satisfying $\pi_G=\pi_GP_G$), independently of the starting distribution $\mu_0$. 
It is well known (see, e.g., \cite{LPW09}) that this stationary distribution satisfies 
\begin{equation}\label{equa:statdistdef1}
\pi_G(u)=\frac{\deg_G(u)}{2e(G)}
\end{equation}
for all $u\in V(G)$.
Given a set $S\subseteq V(G)$, we define $\pi_G(S)\coloneqq\sum_{v\in S}\pi_G(v)$.
It follows from \eqref{equa:statdistdef1} that 
\begin{equation}\label{equa:statdistdef2}
\pi_G(S)=\frac{\deg_G(S)}{2e(G)}=\frac{2e_G(S)+|\partial_G(S)|}{2e(G)}.
\end{equation}
We define $\pi_{\min}(G)\coloneqq\min_{v\in V(G)}\pi_G(v)$ and $\pi_{\max}(G)\coloneqq\max_{v\in V(G)}\pi_G(v)$.

Recall the definition of mixing times in the introduction.
The mixing time of a random walk on a connected (multi)graph $G$ is deeply tied with the concept of \emph{conductance}.
Given a set $S\subseteq V(G)$, we define
\begin{equation}\label{equa:conductdef1}
Q_G(S)\coloneqq\sum_{i\in S}\sum_{j\in V(G)\setminus S}\pi_G(i)P_G(i,j)=\frac{|\partial_G(S)|}{4e(G)},
\end{equation}
where the equality follows from \eqref{equa:randwalkdef} and \eqref{equa:statdistdef1}.
Observe that $Q_G(S)=Q_G(V(G)\setminus S)$.
Finally, we define the \emph{conductance} $\Phi_G(S)$ of $S$ as
\begin{equation}\label{equa:conductdef2}
\Phi_G(S)\coloneqq\frac{Q_G(S)}{\pi_G(S)\pi_G(V(G)\setminus S)}.
\end{equation}
From the definitions in \eqref{equa:statdistdef2}, \eqref{equa:conductdef1} and \eqref{equa:conductdef2} and the fact that $\deg_G(A)\leq2e(G)$ for any set $A\subseteq V(G)$, it follows that 
\begin{equation} \label{eq:conductance simplified}
\Phi_G(S)=\frac{e(G)|\partial_G(S)|}{\deg_G(S)\deg_G(V(G)\setminus S)}\geq\frac{|\partial_G(S)|}{2\deg_G(S)}.
\end{equation}

Our approach to estimate the mixing time of the lazy random walk over a multigraph $G$ is based on ideas of Fountoulakis and Reed~\cite{FR07,FR08}.
Roughly speaking, their main contribution is the fact that the mixing time of an abstract irreducible, reversible, aperiodic Markov chain (which we may represent using a weighted graph $H$ on its state space) can be bounded from above using the conductances of different $H$-connected sets of states of various sizes. 
The fact that we may restrict ourselves to $H$-connected sets is crucial to obtain tighter bounds than would be obtained through other classical means.
For simplicity, here we only state a version of the result of Fountoulakis and Reed~\cite{FR07} which is applicable to our setting.
For any $p\in(\pi_{\min}(G),1)$, we let $\Phi_G(p)$ be the minimum conductance $\Phi_G(S)$ over all $G$-connected sets $S\subseteq V(G)$ such that $p/2\leq\pi_G(S)\leq p$ (if no such set $S$ exists, we set $\Phi_G(p)=1$).

\begin{theorem}[Fountoulakis and Reed~\cite{FR07}]\label{thm:FR}
Let\/ $G$ be a connected multigraph.
There exists an absolute constant\/ $C_0$ such that
\[t_{\mathrm{mix}}(G)\leq C_0\sum_{j=1}^{\lceil\log_2\pi_{\min}(G)^{-1}\rceil}\Phi_G^{-2}(2^{-j}).\]
\end{theorem}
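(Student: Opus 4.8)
The plan is to adapt the \emph{average conductance} method of Lov\'asz and Kannan so that it becomes sensitive to the sizes of bottlenecks: I would track the decay of the discrepancy profile of the lazy walk started from its worst vertex and charge that decay, one dyadic scale at a time, to the conductances $\Phi_G(2^{-j})$; the point that goes beyond Lov\'asz--Kannan is that the cuts appearing in the argument may be taken $G$-connected. Since $P_G$ is a total variation contraction fixing $\pi_G$, the map $t\mapsto d_{\mathrm{TV}}(\mu_t^u,\pi_G)$ is non-increasing for each $u$, so it suffices to bound the first time at which $d_{\mathrm{TV}}(\mu_t^u,\pi_G)<1/4$ for all $u$. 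Fixing a worst-case start $x$ and writing $h_t(v)\coloneqq\mu_t^x(v)/\pi_G(v)$, the object I would follow is the \emph{discrepancy profile}
\[I_t(p)\coloneqq\max\bigl\{\mu_t^x(S)-\pi_G(S):S\sub V(G),\ \pi_G(S)=p\bigr\},\]
extended to all $p\in[0,1]$ by affine interpolation between attainable values of $\pi_G(S)$; it is concave, vanishes at $p\in\{0,1\}$, its optimiser at a scale $p$ is a super-level set of $h_t$, and $\max_p I_t(p)=d_{\mathrm{TV}}(\mu_t^x,\pi_G)$.

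The core of the proof is a one-step estimate: in a single lazy step a $\pi_G$-mass of order $Q_G(S)=|\del_G(S)|/(4e(G))$ crosses the boundary of any super-level set $S$, and, combined with the concavity of $I_t$, this forces $I_t$ to flatten near the scale $p=\pi_G(S)$ at a rate governed by $Q_G(S)$ --- equivalently by $\Phi_G(S)=Q_G(S)/(\pi_G(S)\pi_G(V(G)\setminus S))$. Laziness is exactly what turns this into a genuine one-step contraction, ruling out near-periodic oscillation of the profile. Iterating, the decay of $\max_p I_t(p)$ from its initial value (at most $1$) down to $1/4$ is controlled by a sum of terms $\Phi_G^{-2}$ evaluated at the scales through which the profile successively ``unfolds'', and these scales range from $\pi_G(x)\ge\pi_{\min}(G)$ --- the smallest possible, since every level set contains a vertex --- up to $\Theta(1)$; this is precisely the range $j=1,\dots,\lceil\log_2\pi_{\min}(G)^{-1}\rceil$ of the statement.

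The genuinely new ingredient is the reduction to $G$-connected witnesses. For this I would use the elementary observation that if $S$ is not $G$-connected, with $G$-connected components $S^{(1)},S^{(2)},\dots$, then $|\del_G(S)|=\sum_i|\del_G(S^{(i)})|$ while $\pi_G(V(G)\setminus S^{(i)})\ge\pi_G(V(G)\setminus S)$ for every $i$; together these give $\Phi_G(S)\ge\min_i\Phi_G(S^{(i)})$, so the conductance appearing in the one-step estimate may always be replaced by that of a $G$-connected set, whose $\pi_G$-measure lies in some dyadic window $[2^{-j-1},2^{-j}]$ of comparable or smaller size. (That $\Phi_G(2^{-j})>0$ for every relevant $j$ uses only that $G$ is connected.) The step I expect to be the main obstacle is to make this substitution coherent over the entire trajectory, so that each dyadic scale $2^{-j}$ is billed for at most $O(\Phi_G^{-2}(2^{-j}))$ steps in total --- passing to a small component must not let one scale inherit the bottlenecks of all finer scales. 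This bookkeeping, which is essentially the contribution of Fountoulakis and Reed beyond Lov\'asz--Kannan, would be carried out by following which super-level set is \emph{principal} at each time and using the monotone growth of its measure noted above.

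Granting the one-step estimate in the form ``while the principal scale is $2^{-j}$, the relevant part of $I_t$ shrinks by a factor $1-c\,\Phi_G(2^{-j})^2$ at each step'' for an absolute $c>0$, the theorem follows by telescoping. Partition the run into stages indexed by the principal scale; using $d_{\mathrm{TV}}(\mu_0^x,\pi_G)\le1$ one shows that stage $j$ lasts at most $O(\Phi_G^{-2}(2^{-j}))$ steps before the principal scale advances to $2^{-j+1}$, and once it reaches $\Theta(1)$ we have $d_{\mathrm{TV}}(\mu_t^x,\pi_G)<1/4$. Summing the stage lengths over $j$ from $1$ to $\lceil\log_2\pi_{\min}(G)^{-1}\rceil$ gives $t_{\mix}(G)\le C_0\sum_{j=1}^{\lceil\log_2\pi_{\min}(G)^{-1}\rceil}\Phi_G^{-2}(2^{-j})$ with $C_0$ absolute, the finitely many $O(1)$ adjustments (the laziness constant, the passage from the natural threshold to $1/4$, the last few steps) being absorbed into $C_0$.
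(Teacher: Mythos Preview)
The paper does not prove Theorem~\ref{thm:FR}; it is quoted as a black box from Fountoulakis and Reed~\cite{FR07}, so there is no ``paper's own proof'' to compare against. What you have written is a reasonable high-level outline of the actual Fountoulakis--Reed argument: it does build on the Lov\'asz--Kannan discrepancy-profile method, and the new content is precisely the reduction to $G$-connected witnesses that you isolate.

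That said, your sketch is explicitly incomplete at exactly the point that matters. Your inequality $\Phi_G(S)\ge\min_i\Phi_G(S^{(i)})$ is correct, but the minimising component $S^{(i_0)}$ may sit at a much finer dyadic scale than $S$ itself, so one cannot simply replace $\Phi_G(S)$ by $\Phi_G(2^{-j})$ for the $j$ corresponding to $\pi_G(S)$. You flag this (``passing to a small component must not let one scale inherit the bottlenecks of all finer scales'') but do not resolve it; the phrase ``Granting the one-step estimate\ldots'' is doing all the work. In the actual proof this is handled by a more delicate tracking of how the profile $I_t$ evolves simultaneously at all scales, not just at a single ``principal'' one, and the telescoping is over the values of $I_t$ at the dyadic points $2^{-j}$ rather than over a sequence of stages. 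As written, your bookkeeping scheme (follow a single principal super-level set whose measure grows monotonically) does not obviously prevent the same fine-scale bottleneck from being charged at every coarser scale it influences, so the sum you obtain could a priori be $\lceil\log_2\pi_{\min}^{-1}\rceil\cdot\max_j\Phi_G^{-2}(2^{-j})$ rather than $\sum_j\Phi_G^{-2}(2^{-j})$. Filling this in is the substance of~\cite{FR07}.
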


Another parameter of interest is the \emph{hitting time} to a vertex (or set of vertices) in the random walk on a multigraph $G$.
Given any $v\in V(G)$ and the lazy random walk $(X_t)_{t\geq0}$ with starting distribution $\mu_0$, we define the \emph{hitting time to $v$} as
\[\tau_G(\mu_0,v)\coloneqq\inf\{t\in\mathbb{N}_0:X_t=v, X_0\sim\mu_0\}.\]
In more generality, given any set $S\subseteq V(G)$, we define the \emph{hitting time} to $S$ as
\[\tau_G(\mu_0,S)\coloneqq\inf\{t\in\mathbb{N}_0:X_t\in S, X_0\sim\mu_0\}.\]

Given any vertex $u\in V(G)$, let $P_u$ be the matrix obtained from the transition matrix $P_G$ by removing the row and column corresponding to $u$.
If $P_u$ is primitive (i.e., all entries of $P_u^m$ are positive for some $m\geq 1$), by Perron-Frobenius, the largest eigenvalue of $P_u$, denoted by $\lambda_u$, is real, of multiplicity $1$ and satisfies $\lambda_u<1$.

We will make use of the first visit time lemma of Cooper and Frieze~\cite{CF05}.
Here we state a more recent version with weaker hypotheses due to Manzo, Quattropani and Scoppola~\cite{MQS21}.

\begin{theorem}[First Visit Time Lemma, Manzo, Quattropani and Scoppola~\cite{MQS21}]\label{thm:MQS}
Let\/ $G$ be an\/ $n$-vertex connected multigraph.
Suppose that there exist a real number\/ $c>2$ and a diverging sequence\/ $T=T(n)$ such that the following conditions hold:
\begin{enumerate}[label=$(\mathrm{HP}\arabic*)$]
    \item\label{item:HP1} Fast mixing:\/ $\max_{x,y\in V(G)}|\mu_T^x(y)-\pi_G(y)|=o(n^{-c})$.
    \item\label{item:HP2} Small\/ $\pi_{\max}$:\/ $T\cdot\pi_{\max}(G)=o(1)$.
    \item\label{item:HP3} Large\/ $\pi_{\min}$:\/ $\pi_{\min}(G)=\omega(n^{-2})$.
\end{enumerate}
Then, for all\/ $u\in V(G)$, we have
\begin{equation}\label{eq:hitting_geom}
    \sup_{t\geq0}\left\lvert\frac{\mathbb{P}[\tau_G(\pi_G,u)>t]}{\lambda_u^t}-1\right\rvert\xrightarrow[n\to\infty]{}0
\end{equation}
and
\begin{equation}\label{eq:prob_geom}
    \left\lvert\frac{1-\lambda_u}{\pi_G(u)/R_T(u)}-1\right\rvert\xrightarrow[n\to\infty]{}0,
\end{equation}
where
\[R_T(u)\coloneqq\sum_{t=0}^T\mu_t^u(u)>1\]
is the expected number of indices\/ $t\in[T]\cup\{0\}$ for which the lazy random walk\/ $(X_t)_{t\geq0}$ on\/ $G$ starting at\/ $X_0=u$ satisfies\/ $X_t=u$.
\end{theorem}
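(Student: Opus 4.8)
The plan is to recast both conclusions in terms of the generating function of the return probabilities at $u$, exploiting that $\lambda_u$ is exactly the reciprocal of its smallest positive zero. Fix $u\in V(G)$, write $g_t\coloneqq\mu_t^u(u)=(P_G^t)_{u,u}$, and set $R(z)\coloneqq\sum_{t\ge 0}g_tz^t$ and $F(z)\coloneqq\sum_{t\ge1}f_tz^t$, where $f_t$ is the probability of a first return to $u$ at time $t$. The renewal identity $R(z)(1-F(z))=1$ holds, and decomposing a first return through the first step out of $u$ gives $F(z)=zP_G(u,u)+z^2\,\mathbf p^{\top}(I-zP_u)^{-1}\mathbf q$, where $\mathbf p,\mathbf q$ are the vectors of transition probabilities leaving and entering $u$. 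Since $G$ is connected and the walk is lazy, $P_u$ is primitive for large $n$ (\ref{item:HP3} rules out the degenerate cases), so Perron--Frobenius gives that $(I-zP_u)^{-1}$ is analytic on $\{|z|<1/\lambda_u\}$ with a simple pole at $z=1/\lambda_u$; hence $1/\lambda_u$ is the smallest positive zero of $R$, i.e.\ $R(1/\lambda_u)=0$. Next I would split off the principal part: $R(z)=\pi_G(u)/(1-z)+\hat R(z)$ with $\hat R(z)\coloneqq\sum_{t\ge0}(g_t-\pi_G(u))z^t$. Because for a lazy chain $|g_t-\pi_G(u)|$ decays geometrically, \ref{item:HP1} yields that $\hat R$ is analytic with uniformly bounded value and derivative on a fixed disc around $z=1$ and that $\hat R(1)=R_T(u)+o(1)$: the tail $\sum_{t>T}(g_t-\pi_G(u))$ is $o(1)$ by \ref{item:HP1}, the correction $(T+1)\pi_G(u)$ is $o(1)$ by \ref{item:HP2}, and $R_T(u)\ge g_0=1$.

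To obtain \eqref{eq:prob_geom}, substitute $z=1/\lambda_u$ into $R(1/\lambda_u)=0$ to get $\pi_G(u)\lambda_u/(1-\lambda_u)=\hat R(1/\lambda_u)$. A crude a priori estimate (sandwiching $R(z)$ for $z$ slightly above $1$, using $\pi_G(u)\le\pi_{\max}(G)=o(1)$) shows $\lambda_u\to1$, after which the Lipschitz bound on $\hat R$ lets us replace $\hat R(1/\lambda_u)$ by $\hat R(1)=R_T(u)(1+o(1))$. Rearranging gives $1-\lambda_u=\lambda_u\,\pi_G(u)/\hat R(1/\lambda_u)=(1+o(1))\,\pi_G(u)/R_T(u)$, which is \eqref{eq:prob_geom}.

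For \eqref{eq:hitting_geom} I would pass to the first-visit time from $\pi_G$. Conditioning on the first visit to $u$ and using stationarity gives $\pi_G(u)/(1-z)=H(z)R(z)$, where $H(z)\coloneqq\sum_{t\ge0}\mathbb P[\tau_G(\pi_G,u)=t]z^t$; hence the tail generating function is $S(z)\coloneqq\sum_{t\ge0}\mathbb P[\tau_G(\pi_G,u)>t]z^t=(1-H(z))/(1-z)=\hat R(z)/\bigl((1-z)R(z)\bigr)$, the last equality using the splitting of $R$. In particular $z=1$ is not a singularity of $S$, and in the disc $|z|\le 1/\lambda_u+\delta$ — where $\delta>0$ can be taken independent of $u$, since by \ref{item:HP1} the relaxation-scale singularities (near $1/\mu_2(G)$) and the further zeros of $R$ lie beyond $1/\lambda_u$ by a definite amount — the only singularity of $S$ is the simple pole at $z=1/\lambda_u$. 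Extracting its residue and evaluating it with the estimates above gives $\mathbb P[\tau_G(\pi_G,u)>t]=(1+o(1))\lambda_u^t+O\bigl((1/\lambda_u+\delta)^{-t}\bigr)$, uniformly in $t$, so the error is $o(\lambda_u^t)$ once $t$ is large. For the complementary range of small $t$ one argues directly that both $\mathbb P[\tau_G(\pi_G,u)>t]$ and $\lambda_u^t$ equal $1-o(1)$, using the a priori bound $\mathbb P[\tau_G(\pi_G,u)\le t]\le(t+1)\pi_{\max}(G)$ (from \ref{item:HP2} and \ref{item:HP3}) together with $1-\lambda_u=o(1)$. Splicing the two ranges gives the uniform convergence in \eqref{eq:hitting_geom}.

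The crux of the argument — and the step I expect to be hardest to make rigorous — is controlling all of this uniformly over the $n$ choices of $u$: one must show that the second zero of $R$ (equivalently, the relaxation pole at scale $1/\mu_2(G)$) is separated from $1/\lambda_u$ by an amount that does not shrink too fast with $n$, so that every $o(1)$ above is genuinely uniform in both $u$ and $t$. This is precisely what the three hypotheses are calibrated for: \ref{item:HP1} separates the bulk relaxation scale from the $1/\lambda_u$ scale and controls the tails of $\hat R$, \ref{item:HP2} forces $1/\lambda_u$ to sit close to $1$ and makes the truncation at time $T$ faithful, and \ref{item:HP3} keeps $\lambda_u$ strictly inside the unit disc (primitivity of $P_u$) and excludes pathological small components.
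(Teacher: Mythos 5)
First, a point of orientation: the paper does not prove this statement at all --- it is imported verbatim from Manzo, Quattropani and Scoppola \cite{MQS21} (a strengthening of Cooper and Frieze's First Visit Time Lemma \cite{CF05}), so there is no internal proof to compare you against. What you sketch is essentially the \emph{original} Cooper--Frieze route: renewal identity, analytic continuation of the return-probability series, and residue extraction at the zero $1/\lambda_u$. The cited proof in \cite{MQS21} is deliberately probabilistic precisely because this analytic bookkeeping is delicate; so your route is legitimate in principle, but it is the harder one, and your sketch glosses over exactly the steps that make it hard.

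Two concrete problems. (i) The quantitative claims carrying the argument are false as stated. The series $\hat R(z)=\sum_{t\ge0}(\mu_t^u(u)-\pi_G(u))z^t$ converges only for $|z|<1/\lambda_2(P_G)$, and under \ref{item:HP1}--\ref{item:HP3} one only gets $1-\lambda_2(P_G)\gtrsim \log n/T$, which tends to $0$; so there is no \emph{fixed} disc around $z=1$ on which $\hat R$ has uniformly bounded derivative, and the secondary singularities of $S(z)=\hat R(z)/\bigl((1-z)R(z)\bigr)$ are \emph{not} separated from $1/\lambda_u$ by a constant $\delta$ independent of $u$ and $n$. The only separation available is relative: $1-\lambda_u=O(\pi_{\max}(G))=o(1/T)$ versus $1-\lambda_2(P_G)\gtrsim\log n/T$, and all the error terms in your residue/contour estimates (whose implied constants blow up as the contour approaches radius $1/\lambda_2(P_G)$, and whose tail sums need $T=o(n)$, which follows from \ref{item:HP2} and $\pi_{\max}(G)\ge 1/n$, together with $c>2$) must be tracked on these shrinking scales, uniformly in $u$ and $t$. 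You yourself call this uniformity ``the crux'' and the hardest step, but flagging it is not doing it: as written, \eqref{eq:hitting_geom} is not established. (By contrast, your comparison $\hat R(1/\lambda_u)=(1+o(1))R_T(u)$ is rescuable without any Lipschitz bound, by splitting the sum at $T$ and using $(1/\lambda_u)^T=1+O(T\pi_{\max}(G))$ together with \ref{item:HP1}; also note $R(1/\lambda_u)=0$ only makes sense for the continuation $\pi_G(u)/(1-z)+\hat R(z)$, since the series itself diverges already at $z=1$.) (ii) The claim that connectivity, laziness and \ref{item:HP3} make $P_u$ primitive is wrong: if $u$ is a cut vertex, $P_u$ is reducible, and \ref{item:HP3} says nothing about this. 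One can still show that $F(z)$ has a simple pole with positive residue at $1/\lambda_u$ (each component of $G-u$ receives positive transition mass from $u$, and each block of $P_u$ is primitive by laziness), but that is the argument required, not Perron--Frobenius applied to $P_u$ as a whole.
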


From the intuitive point of view, the theorem says that the hitting time to $u$ is roughly distributed as a geometric random variable with success probability $\pi_G(u)/R_T(u)$.
If one wants to hit $u$ by independently sampling vertices according to $\pi_G$, then it would be a geometric random variable with success probability $\pi_G(u)$.
The factor $R_T(u)$ is the price to pay for taking into account the geometry of the graph: the more likely it is to return from $u$ to $u$, the less connected $u$ is to the rest of the graph, and the smaller the probability to hit it at a given (large) time is.


\begin{remark}\label{rem:pi_max_not_needed}
In the proof of Theorem~\ref{thm:MQS}, one can check that, if we only want \eqref{eq:hitting_geom} to hold for a given\/ $u\in V(G)$, then \ref{item:HP2} can be replaced by
\begin{enumerate}[label=$(\mathrm{HP}\arabic*')$]
    \setcounter{enumi}{1}
    \item\label{item:HP2'} Small\/ $\pi_G(u)$:\/ $T\cdot\pi_G(u)=o(1)$.
\end{enumerate}
\end{remark}

\begin{remark}\label{rem:replace pi}
The following holds as a corollary of Theorem~\ref{thm:MQS}.
For any fixed $D>0$ and $n$-vertex connected multigraph $G$ satisfying \ref{item:HP1}, \ref{item:HP2} (or~\ref{item:HP2'}) and~\ref{item:HP3} with the additional property that $e(G)\leq Dn$, if $u\in V(G)$ and $t_0=t_0(n)$ is such that $\lambda_u^{t_0}=1-o(1)$, then 
\begin{equation} \label{eq: replace pi eq}
\frac{1}{n}\sum_{v\in V(G)} \mathbb{P}[\tau_G(\mu_0^v,u)\leq t_0]=o(1).
\end{equation}
Indeed, for $\eps>0$, Theorem~\ref{thm:MQS} implies that $\mathbb{P}[\tau_G(\pi_G,u)\leq t_0]\leq \eps^2$. 
Then, $B_{\eps}\coloneqq\{v\in V(G):\mathbb{P}[\tau_G(\mu_0^v,u)\leq t_0]\geq \eps\}$ satisfies $\pi_G(B)\leq \eps$.
Moreover, as $G$ is connected, we have that $\pi_G(B)\geq |B|/(2e(G))\geq |B|/(2Dn)$  and so $|B|\leq 2\eps D n$.
Therefore, 
\begin{align*}
    \sum_{v\in V(G)}\mathbb{P}[\tau_G(\mu_0^v,u)\leq t_0]&=\sum_{v\in V(G)\setminus  B}\mathbb{P}[\tau_G(\mu_0^v,u)\leq t_0]+\sum_{v\in B}\mathbb{P}[\tau_G(\mu_0^v,u)\leq t_0]\\
    &\leq (2D+1)\eps n,
\end{align*}
which can be made arbitrarily small, by taking $\eps$ small with respect to $D$.
This establishes \eqref{eq: replace pi eq}.
\end{remark}

\section{A general approach to average mixing times} \label{sec:main}

\subsection{Proof overview} \label{sec:overview}

As discussed in the introduction, the main tool we will use to bound the mixing times of random walks is the result of Fountoulakis and Reed (Theorem~\ref{thm:FR}) which relates the (worst-case) mixing time of a random walk in a graph $G$ to the conductance (see \eqref{equa:conductdef2}) of the $G$-connected vertex subsets $S$ of $G$.
We think of vertex subsets $S$ whose conductance is \emph{poor} (those for which $\Phi_G(S)=o(1)$) as bottlenecks: they have more edges internally in $S$ than leaving $S$ and so the random walk is likely to get held up in $S$.
The spreader graphs (see Definition~\ref{def:surpressed}) we are interested in studying here have only few small bottlenecks.
Indeed, any vertex subset which can lead to small bottlenecks must either be thin or loaded and our upper bounds on the number of these sets in a spreader graph readily imply that any vertex set with poor conductance is of at most polylogarithmic size (size $(\log^2n)$ to be precise, see Remark~\ref{rem:no big bad}).
Now, if a set with poor conductance is very small (size at most $(\log n)^{1/5}$), it will not slow down mixing significantly as our random walk will not get stuck for very long in these sets before leaving them.
Therefore, it is the intermediate size sets which pose a problem, and we will first show in \cref{sec:bad} (see Lemma~\ref{lem:smallU}) that the set $U$ of bad vertices contained in \emph{some} intermediate set which has poor conductance contains a negligible proportion of the overall vertex set of our spreader graph. 

Intuitively, we can then see how starting at an average vertex in $G$ speeds up the mixing time.
Indeed, we are very unlikely to start at a bad vertex in $U$ and, moreover, we are in fact very unlikely to visit a vertex in $U$ in the first $O(\log n)$ time steps, by which time we aim to show that the distribution of the random walk is already well-mixed.
In order to formalise this intuition, we adjust our spreader graph $G$ by shrinking the intermediate sets with poor conductance and thus removing troublesome small bottlenecks.
The resulting (multi-)graph we will call $G^*$.
Using that the number of bad vertices $|U|$ is negligible, or rather that the number of edges incident to $U$, $\deg_G(U)$, is negligible (Lemma~\ref{lem:smallU}), we show in \cref{sec:stationary} that switching from $G$ to $G^*$ does not have a big effect on the edge distribution and that, in particular, the stationary distributions on $G$ and $G^*$ are comparable.
We then show in \cref{sec:contract} that, after contracting intermediate sets with poor conductance, we can apply Theorem~\ref{thm:FR} of Fountoulakis and Reed to conclude that the \emph{worst-case} mixing time in $G^*$ is logarithmic.
Here we will need that $G^*$ is defined carefully to preserve connectivity between sets after contractions (see \eqref{eq:f}).
Finally, we will prove Theorem~\ref{thm:main} by coupling the random walk from an average vertex on $G$ with the random walk in $G^*$.
As the random walk in $G^*$ from \emph{any} starting point mixes rapidly, we can conclude that the random walk in $G$ also mixes rapidly as long as the two random walks stay coupled for long enough.
For this, our final ingredient is to show that the random walk in $G$ is unlikely to hit our bad vertices $U$, which we do in \cref{sec:hitting} by appealing to the First Visit Time Lemma (Theorem~\ref{thm:MQS}) of Manzo, Quattropani and Scoppola.

\subsection{Badly connected sets} \label{sec:bad}

We will make use of the following simple definition.

\begin{definition} \label{def:bad}
For $\gamma>0$ and a connected multigraph $G$, we say a set $S\subseteq V(G)$ is \emph{$\gamma$-bad} in $G$ if 
\[\frac{|\del_G(S)|}{\deg_G(S)}<\gamma, \]
and that it is \emph{$\gamma$-good} otherwise.
\end{definition}

The following lemma gives us a basic property of bad sets in a connected multigraph and quickly ties them with our notion of $(\alpha,D)$-spreader graphs.
Note that the notions of thin and loaded sets extend naturally to multigraphs.

\begin{lemma} \label{lem:simplebad}
Let $G$ be a multigraph.
For\/ $0<\alpha\leq1$, if a set $S\subseteq V(G)$ is $({\alpha^2}/{4})$-bad in $G$, then
\begin{enumerate}[label=$(\arabic*)$]
    \item \label{item:bad1} $|\del_G(S)|\le 2e_G(S)$,
    and
    \item \label{item:bad2} either $S$ is $\alpha$-thin in $G$ or it is $\alpha^{-1}$-loaded in $G$ (or both).
\end{enumerate}

\end{lemma}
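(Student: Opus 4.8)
The plan is to prove both parts by direct manipulation of the elementary identity $\deg_G(S)=2e_G(S)+|\del_G(S)|$ recorded in \cref{sec:prelims}, combined with the defining inequality of $(\alpha^2/4)$-badness, namely $|\del_G(S)|<(\alpha^2/4)\deg_G(S)$. We may assume $\deg_G(S)>0$, since otherwise $S$ spans no edges and has no boundary, and the statement is vacuous (or trivial, taking $S$ to be $\alpha$-thin when $S\neq\emptyset$).

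For part \ref{item:bad1}, I would start from $|\del_G(S)|<(\alpha^2/4)\deg_G(S)\le(1/4)\deg_G(S)$, using $\alpha\le 1$. Substituting $\deg_G(S)=2e_G(S)+|\del_G(S)|$ gives $|\del_G(S)|<\tfrac12 e_G(S)+\tfrac14|\del_G(S)|$, hence $\tfrac34|\del_G(S)|<\tfrac12 e_G(S)$, i.e.\ $|\del_G(S)|<\tfrac23 e_G(S)\le 2e_G(S)$, as required. (In particular $e_G(S)>0$, so no degenerate case slips through once $\deg_G(S)>0$.)

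For part \ref{item:bad2}, suppose $S$ is \emph{not} $\alpha$-thin, so $|\del_G(S)|\ge\alpha|S|$; I want to deduce that $S$ is $\alpha^{-1}$-loaded, i.e.\ $e_G(S)>\alpha^{-1}|S|$. From part \ref{item:bad1} we get $\deg_G(S)=2e_G(S)+|\del_G(S)|\le 4e_G(S)$. Feeding this into the badness inequality yields $|\del_G(S)|<(\alpha^2/4)\deg_G(S)\le\alpha^2 e_G(S)$. Chaining with $\alpha|S|\le|\del_G(S)|$ gives $\alpha|S|<\alpha^2 e_G(S)$, and dividing by $\alpha>0$ gives $|S|<\alpha e_G(S)$, that is $e_G(S)>\alpha^{-1}|S|$, so $S$ is $\alpha^{-1}$-loaded.

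There is no real obstacle here: the argument is a short chain of inequalities. The only points requiring mild care are (i) using $\alpha\le 1$ at the right moment to pass from $\alpha^2/4$ to $1/4$ in part \ref{item:bad1}, (ii) keeping the directions of the strict/non-strict inequalities consistent, and (iii) disposing of the degenerate case $\deg_G(S)=0$ up front so that all divisions are legitimate.
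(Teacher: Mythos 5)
Your proof is correct and takes essentially the same route as the paper: both arguments rest on the identity $\deg_G(S)=2e_G(S)+|\del_G(S)|$ combined with the $(\alpha^2/4)$-badness inequality, with part \ref{item:bad1} supplying the bound $\deg_G(S)\le 4e_G(S)$ used in part \ref{item:bad2}. The only cosmetic differences are that you argue directly (and dispose of the degenerate case $\deg_G(S)=0$ explicitly), whereas the paper phrases both parts as short proofs by contradiction.
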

\begin{proof}
The assertion \ref{item:bad1} follows easily since, if $|\del_G(S)| > 2e_G(S)$, then 
\[\frac{|\del_G(S)|}{\deg_G(S)}=\frac{|\del_G(S)|}{2e_G(S)+|\del_G(S)|}\geq \frac{|\del_G(S)|}{2|\del_G(S)|} =\frac{1}{2}\geq\frac{\alpha^2}{4}, \]
a contradiction.

For the second assertion, suppose that $S$ is neither $\alpha$-thin nor $\alpha^{-1}$-loaded in $G$.
Then, we have that 
\[\frac{|\del_G(S)|}{\deg_G(S)}=\frac{|\del_G(S)|}{2e_G(S)+|\del_G(S)|}\geq \frac{|\del_G(S)|}{4e_G(S)} \geq \frac{\alpha|S|}{4\alpha^{-1}|S|} = \frac{\alpha^2}{4},\]
a contradiction.
Here we used that $|\del_G(S)|\leq 2e_G(S)$ from \ref{item:bad1} in the first inequality and the definitions of $\alpha$-thin and $\alpha^{-1}$-loaded in the second. 
\end{proof}

We will also make use of the following simple observation.

\begin{remark} \label{rem:no big bad}
Let $\alpha,D>0$, and let $G$ be an $n$-vertex graph satisfying \ref{item:few suppressed} and \ref{item:few loaded}.
Then, there are \emph{no} $G$-connected $\alpha$-thin or \mbox{$\alpha^{-1}$-loaded} sets $S$ of size $(\log n)^2\leq|S|\leq(1-1/D^2)n$.
\end{remark}

Given any $\alpha>0$ and $D>0$, we define $\beta\coloneqq1/D^2$ and $\gamma\coloneqq{\alpha^2}/{4}$.
Given any $n$-vertex graph $G$, let
\begin{equation}\label{eq:cS}
\cS(\alpha,D)\coloneqq\big\{S\subseteq V(G):S \text{ is $G$-connected and } \gamma\text{-bad}, (\log n)^{1/5}\leq |S|\leq (1-\beta)n\big\}.
\end{equation}
Further, we define $U(\alpha,D)\subseteq V(G)$ to be the set of vertices that lie in sets in $\cS(\alpha,D)$, that is,
\begin{equation} \label{eq: U def}
    U(\alpha,D)\coloneqq\bigcup_{S \in \cS(\alpha,D)}S.
\end{equation}

We now give bounds on $|U(\alpha,D)|$ and $\deg_G(U(\alpha,D))$. 

\begin{lemma}\label{lem:smallU}
Let $\alpha\in (0,1]$ and $D>0$.
Let $G$ be an $n$-vertex connected graph which satisfies \ref{item:few suppressed} and \ref{item:few loaded}.
If $n$ is sufficiently large, then
\begin{equation}\label{eq:smallU1}
|U(\alpha,D)|\le n\exp(-(\log n)^{1/11})
\end{equation}
and
\begin{equation}\label{eq:smallU2}
\deg_G(U(\alpha,D))\le n\exp(-(\log n)^{1/11}).
\end{equation}
In particular,
\begin{equation}\label{eq:smallU3}
\pi_G(U(\alpha,D))\le \exp(-(\log n)^{1/11}).
\end{equation}
\end{lemma}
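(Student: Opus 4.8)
The plan is to reduce the lemma to a single counting estimate fed by the spreader conditions \ref{item:few suppressed} and \ref{item:few loaded}. First I would note that every $S\in\cS(\alpha,D)$ is $G$-connected and $({\alpha^2}/{4})$-bad, so by Lemma~\ref{lem:simplebad} it is $\alpha$-thin or $\alpha^{-1}$-loaded in $G$ and satisfies $|\del_G(S)|\le 2e_G(S)$. Since $G$ satisfies \ref{item:few suppressed} and \ref{item:few loaded}, Remark~\ref{rem:no big bad} rules out any $G$-connected $\alpha$-thin or $\alpha^{-1}$-loaded set of size between $(\log n)^2$ and $(1-1/D^2)n$; hence every $S\in\cS(\alpha,D)$ in fact satisfies $(\log n)^{1/5}\le|S|<(\log n)^2$. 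So $U(\alpha,D)$ is a union of few sets, each of polylogarithmic size.

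Next I would bound $|\cS(\alpha,D)|$ by grouping the sets according to their common size $k$ and splitting into thin and loaded sets: by \ref{item:few suppressed} and \ref{item:few loaded}, for each $k$ with $(\log n)^{1/5}\le k<(\log n)^2$ there are fewer than $2n\nume^{-\sqrt{k}}\le 2n\nume^{-(\log n)^{1/10}}$ such sets. Summing over the fewer than $(\log n)^2$ admissible values of $k$ gives $|\cS(\alpha,D)|<2n(\log n)^2\nume^{-(\log n)^{1/10}}$.

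Then \eqref{eq:smallU1} follows from $|U(\alpha,D)|\le\sum_{S\in\cS(\alpha,D)}|S|<|\cS(\alpha,D)|\,(\log n)^2$. For \eqref{eq:smallU2} I would first observe that, since $G$ is simple and $|\del_G(S)|\le 2e_G(S)$, each $S\in\cS(\alpha,D)$ has $\deg_G(S)=2e_G(S)+|\del_G(S)|\le 4e_G(S)\le 4\binom{|S|}{2}<2(\log n)^4$; then $\deg_G(U(\alpha,D))\le\sum_{S\in\cS(\alpha,D)}\deg_G(S)<2(\log n)^4\,|\cS(\alpha,D)|$. In both estimates the polylogarithmic prefactor is absorbed by the exponential because $(\log n)^{1/10}-(\log n)^{1/11}=(\log n)^{1/11}\big((\log n)^{1/110}-1\big)$ grows polynomially in $\log n$ and hence dominates every $O(\log\log n)$ term; thus $4(\log n)^{6}\nume^{-(\log n)^{1/10}}\le\nume^{-(\log n)^{1/11}}$ for $n$ large, which yields both displayed bounds. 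Finally, \eqref{eq:smallU3} is immediate: by \eqref{equa:statdistdef1} and the connectivity of $G$, $\pi_G(U(\alpha,D))=\deg_G(U(\alpha,D))/(2e(G))\le\deg_G(U(\alpha,D))/n$ since $2e(G)\ge 2(n-1)\ge n$.

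The argument is essentially bookkeeping, and there is no serious obstacle. The only steps requiring a little care are the initial reduction — realising that the $\gamma$-bad $G$-connected sets we must control cannot have intermediate size, so $\cS(\alpha,D)$ lives entirely in the polylogarithmic regime — and the final check that the exponent $(\log n)^{1/11}$, as opposed to $(\log n)^{1/10}$, leaves exactly enough slack to swallow the polynomial-in-$\log n$ factors coming from the number of admissible sizes and from $\binom{|S|}{2}$.
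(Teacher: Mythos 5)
Your proof is correct and follows the paper's structure almost exactly: reduce to counting $\cS(\alpha,D)$ via Lemma~\ref{lem:simplebad} and properties \ref{item:few suppressed}--\ref{item:few loaded}, use Remark~\ref{rem:no big bad} to confine all bad sets to sizes below $(\log n)^2$, bound $|\cS(\alpha,D)|\le 2n(\log n)^2\nume^{-(\log n)^{1/10}}$, and let the slack between $(\log n)^{1/10}$ and $(\log n)^{1/11}$ absorb the polylogarithmic prefactors. The one place you diverge is the bound on $\deg_G(S)$ for $S\in\cS(\alpha,D)$: you use $\deg_G(S)\le 4e_G(S)\le 4\binom{|S|}{2}<2(\log n)^4$, which is valid since $G$ is a simple graph and $|S|<(\log n)^2$, whereas the paper embeds each $S$ in a $G$-connected superset $b(S)$ with $(\log n)^2\le|b(S)|\le(\log n)^3$ and invokes \ref{item:few loaded} (via Remark~\ref{rem:no big bad}) to get $e_G(S)\le e_G(b(S))\le\alpha^{-1}(\log n)^3$. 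Your shortcut is slightly simpler and entirely adequate here (the paper itself uses the trivial quadratic bound elsewhere); the paper's route has the mild advantage of relying only on the density control coming from the spreader conditions rather than on the per-set quadratic edge bound, but for this lemma both yield polylogarithmic bounds that the exponential factor swallows, so the proposal stands as written.
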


\begin{proof}
Let $\cS=\cS(\alpha,D)$ and $U=U(\alpha,D)$.
By Lemma~\ref{lem:simplebad}\ref{item:bad2}, all sets $S\in\mathcal{S}$ must be $\alpha$-thin or $\alpha^{-1}$-loaded.
By \ref{item:few suppressed} and \ref{item:few loaded}, for each $(\log n)^{1/5}\leq k\leq (1-\beta)n$, the number of $\alpha$-thin or $\alpha^{-1}$-loaded sets of size $k$ in $G$ is less than $2n\exp(-\sqrt{k})$.
Moreover, as stated in Remark~\ref{rem:no big bad}, there are no $\alpha$-thin or $\alpha^{-1}$-loaded sets of size $k\geq(\log n)^2$.
Thus,
\begin{equation}\label{eq:smallU4}
    |\cS|<2n\sum_{k=(\log n)^{1/5}}^{(\log n)^2}\nume^{-\sqrt{k}}\leq2n(\log n)^2\nume^{-(\log n)^{1/10}}.
\end{equation}
Therefore, by the definition of $U$, the bound on the size of the largest $S\in\cS$, and assuming $n$ is sufficiently large, we conclude that
\[|U|\leq\sum_{S \in \cS}|S|\leq|\cS|(\log n)^2<2n(\log n)^4\nume^{-(\log n)^{1/10}}\leq n\nume^{-(\log n)^{1/11}}.\]

We now turn our attention to \eqref{eq:smallU2}.
By the definition of $U$, we have that 
\[\deg_G(U)\leq \sum_{S\in \cS}\deg_G(S),\]
and using Lemma~\ref{lem:simplebad}\ref{item:bad1} this simplifies to 
\[\deg_G(U)\leq \sum_{S\in \cS}4e_G(S).\]
Now, for each $S\in \cS$, let $b(S)$ be some $G$-connected set such that $(\log n)^2\leq|b(S)|\leq(\log n)^3$ and $S\subseteq b(S)$.
Note that this is possible because $G$ is connected and every $S\in\cS$ has size less than $(\log n)^2$.
By \ref{item:few loaded} and the bounds on $|b(S)|$, for each $S\in \cS$ we have that
\[e_G(b(S))\le \alpha^{-1}|b(S)|\le \alpha^{-1}(\log n)^3.\]
Therefore, using \eqref{eq:smallU4} and for $n$ sufficiently large,
\[
\deg_G(U)\leq \sum_{S\in \cS}4e_G(S)\le \sum_{S\in \cS}4e_G(b(S))\le 4\alpha^{-1}(\log n)^3 |\cS|\le n\nume^{-(\log n)^{1/11}}.
\]
In particular, since $G$ is connected (so $e(G)\geq n/2$), it follows from \eqref{equa:statdistdef2} that
\[
\pi_G(U)=\frac{\deg_G(U)}{2e(G)}\leq \nume^{-(\log n)^{1/11}}.\qedhere
\]
\end{proof}

\subsection{Stationary distributions} \label{sec:stationary}

Let $\alpha>0$ and $D>0$ be given, let $G$ be some $n$-vertex graph, and consider the set $U=U(\alpha,D)$.
We now define $G^*=G^*(\alpha,D)$ to be the multigraph obtained by contracting all the connected components of $G[U]$ to single vertices.
To be more precise, let $U=U_1\cup\ldots\cup U_t$ be a partition of $U$ into sets each of which induces a connected component in $G[U]$ and let $U^*\coloneqq\{u_1,\ldots,u_t\}$ be a set of $t$ new vertices.
Then, let $V(G^*)=(V(G)\setminus U)\cup U^*$ and, for each $x\in V(G)$, let 
\begin{equation} \label{eq:f}
f(x)= \begin{cases}
    x & \text{if } x\notin U, \\
    u_i & \text{if } x\in U_i\subseteq U.
\end{cases}
\end{equation}
In particular, $f(U)=U^*$.
Finally, we define the multiset \[E(G^*)\coloneqq\{f(x)f(y):xy\in E(G), f(x)\neq f(y)\}.\]
Observe that $G^*$ is connected if and only if $G$ is connected, and that $U^*$ must be an independent set in $G^*$.

Given some connected graph $G$, we want to compare the behaviour of the lazy random walk on $G$ and its contracted form $G^*$.
In particular, we wish to compare their stationary distributions.
In order to do this, we need to make them comparable by having them on the same state space.
Let us describe this in full generality.
Let $H_1=(V_1,E_1)$ and $H_2=(V_2,E_2)$ be two connected multigraphs (possibly with $V_1\cap V_2\neq\varnothing$).
Then, we define an auxiliary multigraph $H\coloneqq H_1\cup H_2=(V_1\cup V_2,E_1\cup E_2)$ as the union of $H_1$ and $H_2$.
Given the stationary distributions $\pi_{H_1}$ and $\pi_{H_2}$, we define two distributions $\sigma_1$ and $\sigma_2$ on $H$, where, for each $v\in V(H)$,
\begin{equation}\label{eq:now}
    \begin{aligned}
    \sigma_1(v)&=\begin{cases}\pi_{H_1}(v)&\text{if }v\in V_1,\\0&\text{otherwise};\end{cases}\\
    \sigma_2(v)&=\begin{cases}\pi_{H_2}(v)&\text{if }v\in V_2,\\0&\text{otherwise}.\end{cases} 
\end{aligned}
\end{equation}
For the sake of notation, for any vertex $v\in V(H)\setminus V_1$ we set $\deg_{H_1}(v)=0$ and, similarly, for any $v\in V(H)\setminus V_2$ we set $\deg_{H_2}(v)=0$. 
With this setup, we abuse notation slightly and write $d_{\mathrm{TV}}(\pi_{H_1},\pi_{H_2})$ for $d_{\mathrm{TV}}(\sigma_1,\sigma_2)$.

\begin{lemma} \label{lem:stationary}
Let $D\geq1$ and $0<\alpha<1/D^2$.
Let $G$ be an $n$-vertex connected graph which satisfies \ref{item:few suppressed} and \ref{item:few loaded}.
If $n$ is sufficiently large, we have that
\[d_{\mathrm{TV}}(\pi_G,\pi_{G^*})\le \exp(-(\log n)^{1/12}).\]
\end{lemma}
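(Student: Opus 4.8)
The plan is to compare the stationary distributions $\pi_G$ and $\pi_{G^*}$ vertex by vertex, exploiting that $G^*$ is obtained from $G$ only by contracting the components of $G[U]$, where $U = U(\alpha,D)$ is the negligible set controlled by Lemma~\ref{lem:smallU}. First I would record the relationship between the edge counts of the two (multi)graphs. By the definition of $E(G^*)$ in \eqref{eq:f}, the edges of $G$ that survive in $G^*$ are exactly those with both endpoints in distinct classes of $f$; the edges that disappear are precisely those lying inside some component $U_i$ of $G[U]$, i.e.\ the $e_G(U)$ edges counted inside $U$ (note $e_G(U) = \sum_i e_G(U_i)$ since the $U_i$ are the connected components of $G[U]$, hence no $G$-edges run between distinct $U_i$). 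Therefore $e(G^*) = e(G) - e_G(U)$, and by Lemma~\ref{lem:smallU} (specifically \eqref{eq:smallU2}, together with $e_G(U)\le \deg_G(U)$ and $e(G)\ge n/2$) we get $e(G^*) = e(G)(1 - o(1))$, with the error term of size at most $\exp(-(\log n)^{1/11})$ up to a constant factor.

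Next I would compute $\pi_{G^*}$. For a vertex $v \in V(G)\setminus U$, its degree is unchanged by the contraction, so $\deg_{G^*}(v) = \deg_G(v)$ and hence $\pi_{G^*}(v) = \deg_G(v)/(2e(G^*))$, whereas $\pi_G(v) = \deg_G(v)/(2e(G))$. For the new vertices $u_i \in U^*$ we have $\deg_{G^*}(u_i) = |\del_G(U_i)|$, and since $\sigma_1$ (the copy of $\pi_G$ on the joint state space) assigns mass $0$ to each $u_i$ and $\sigma_2$ assigns mass $\pi_G(v)$ to each original $v\in U$ equal to $0$, in the $\tfrac12\sum|\sigma_1 - \sigma_2|$ formulation \eqref{equa:dTVdef} the contribution from $U \cup U^*$ is at most $\tfrac12(\pi_G(U) + \pi_{G^*}(U^*))$. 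Then $\pi_G(U) \le \exp(-(\log n)^{1/11})$ by \eqref{eq:smallU3}, while $\pi_{G^*}(U^*) = \sum_i |\del_G(U_i)|/(2e(G^*)) \le \deg_G(U)/(2e(G^*))$, which is again of order $\exp(-(\log n)^{1/11})$ using the lower bound on $e(G^*)$ just established. For the common vertices $v\in V(G)\setminus U$, the pointwise discrepancy is
\[
|\pi_G(v) - \pi_{G^*}(v)| = \deg_G(v)\left|\frac{1}{2e(G)} - \frac{1}{2e(G^*)}\right| = \frac{\deg_G(v)}{2e(G)}\cdot\frac{e_G(U)}{e(G^*)} = \pi_G(v)\cdot\frac{e_G(U)}{e(G^*)},
\]
so summing over all such $v$ gives at most $e_G(U)/e(G^*) = O(\exp(-(\log n)^{1/11}))$.

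Combining the two contributions, $d_{\mathrm{TV}}(\pi_G,\pi_{G^*}) = d_{\mathrm{TV}}(\sigma_1,\sigma_2)$ is bounded by a constant times $\exp(-(\log n)^{1/11})$, which for $n$ large is at most $\exp(-(\log n)^{1/12})$, as claimed; the slack between the exponents $1/11$ and $1/12$ absorbs the constant factors. I do not anticipate a genuine obstacle here: the one point requiring a little care is making sure that no $G$-edges are lost \emph{between} distinct components $U_i$ (they are not, by the definition of the $U_i$ as the components of $G[U]$), so that $e(G^*) = e(G) - e_G(U)$ exactly; and that the hypotheses \ref{item:few suppressed}–\ref{item:few loaded} are used only through Lemma~\ref{lem:smallU}, which is why they appear in the statement. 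Everything else is bookkeeping with \eqref{equa:statdistdef1}, \eqref{equa:statdistdef2} and \eqref{equa:dTVdef}.
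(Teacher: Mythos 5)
Your proposal is correct and follows essentially the same route as the paper: a direct vertex-by-vertex comparison of $\pi_G$ and $\pi_{G^*}$ using the degree/edge-count relations of the contraction and Lemma~\ref{lem:smallU}, with the slack between the exponents $1/11$ and $1/12$ absorbing constants. The only cosmetic difference is that you split the total variation sum by vertex type (common vertices versus $U\cup U^*$) after computing $e(G^*)=e(G)-e_G(U)$ exactly, whereas the paper splits via a triangle inequality through $\deg_{G^*}(v)/(2e(G))$; both reduce to the same bound of order $\deg_G(U)/e(G)$.
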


\begin{proof}
Let $U=U(\alpha,D)$, and let $U^*=f(U)\subseteq V(G^*)$.
We also fix $\tilde{G}\coloneqq
G\cup G^*$, and recall the distributions  defined in \eqref{eq:now}.
Observe that
\begin{equation}\label{eq:dTV1}
    \sum_{v\in V(\tilde G)}|{\deg_G(v)}-\deg_{G^*}(v)|=\sum_{v\in U}\deg_G(v)+\sum_{v\in U^*}\deg_{G^*}(v)\leq2\deg_G(U).
\end{equation}
The equality uses the fact that, for all $v\in V(G)\setminus U$, we have $\deg_{G}(v)=\deg_{G^*}(v)$.
In the inequality, we simply note that $\sum_{v\in U}\deg_G(v)=\deg_G(U)$ by definition and that, by the definition of $G^*$, this gives an upper bound for the second sum. 
Moreover, from \eqref{equa:dTVdef} and the triangle inequality we have
\begin{align}\label{eq:dTV2}
    d_{\mathrm{TV}}(\pi_G,\pi_{G^*})&=\frac12\sum_{v\in V(\tilde G)}\left|\frac{\deg_G(v)}{2e(G)}-\frac{\deg_{G^*}(v)}{2e(G^*)}\right|\nonumber\\
    &\leq\frac12\sum_{v\in V(\tilde G)}\left(\left|\frac{\deg_G(v)-\deg_{G^*}(v)}{2e(G)}\right|+\left|\frac{\deg_{G^*}(v)}{2e(G)}-\frac{\deg_{G^*}(v)}{2e(G^*)}\right|\right).
\end{align}
The second term in the sum can be evaluated as
\begin{align*}
    \sum_{v\in V(\tilde G)}\left|\frac{\deg_{G^*}(v)}{2e(G)}-\frac{\deg_{G^*}(v)}{2e(G^*)}\right|&=\sum_{v\in V(\tilde G)}\frac{e(G)-e(G^*)}{2e(G)e(G^*)}\deg_{G^*}(v)\\
    &=\frac{e(G)-e(G^*)}{e(G)}\\
    &=\frac{1}{2e(G)}\sum_{v\in V(\tilde G)}({\deg_G(v)}-\deg_{G^*}(v))\\
    &\leq\frac{1}{2e(G)}\sum_{v\in V(\tilde G)}|{\deg_G(v)}-\deg_{G^*}(v)|.
\end{align*}
Introducing this in \eqref{eq:dTV2} together with \eqref{eq:dTV1} and using Lemma~\ref{lem:smallU} and that $e(G)\geq n/2$, we conclude that 
\[d_{\mathrm{TV}}(\pi_G,\pi_{G^*})\leq\sum_{v\in V(\tilde G)}\left|\frac{\deg_G(v)-\deg_{G^*}(v)}{2e(G)}\right|\leq\frac{\deg_G(U)}{e(G)}\leq\exp(-(\log n)^{1/12}).\qedhere\]
\end{proof}

\subsection{Mixing time after contractions} \label{sec:contract}

The following result shows the mixing properties of contracted spreader graphs.
\begin{proposition} \label{prop:mixing G*}
For all $D\geq 4$, $0<\alpha<1/D^2$ and $\eps>0$, there exists a $C>0$ such that the following holds for all $n$ sufficiently large.
Suppose $G$ is an $n$-vertex connected $(\alpha,D)$-spreader graph.
Then,
\[t_\mix(G^*(\alpha,D),\eps)\leq C\log n.\]
\end{proposition}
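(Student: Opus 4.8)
The plan is to apply the Fountoulakis--Reed bound (Theorem~\ref{thm:FR}) to the contracted multigraph $G^*=G^*(\alpha,D)$, so the task reduces to showing that every $G^*$-connected set $S$ with $\pi_{G^*}(S)\le 1/2$ has conductance bounded below by a constant depending only on $\alpha$ and $D$ (up to a handful of small sizes that contribute only a bounded number of terms to the sum). Since $G^*$ is connected (as $G$ is), has $e(G^*)\le e(G)\le Dn$ by property \ref{item: no dense set}, and every vertex has degree at least $1$, we have $\pi_{\min}(G^*)\ge 1/(2Dn)$, so the sum in Theorem~\ref{thm:FR} has $O(\log n)$ terms; if each $\Phi_{G^*}(2^{-j})$ is bounded below by an absolute constant, we immediately get $t_\mix(G^*,\varepsilon)=O(\log n)$ (the $\varepsilon$-dependence costing only another constant factor via the standard $t_\mix(G^*,\varepsilon)\le \lceil\log_2\varepsilon^{-1}\rceil\, t_\mix(G^*)$ inequality, or by restating Theorem~\ref{thm:FR} with the obvious modification).

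The core of the argument is therefore the conductance lower bound. Let $S\subseteq V(G^*)$ be $G^*$-connected. By \eqref{eq:conductance simplified} it suffices to show $|\partial_{G^*}(S)|\ge c\,\deg_{G^*}(S)$ for a constant $c=c(\alpha,D)>0$, i.e.\ that $S$ is $c$-good in $G^*$, equivalently (in the contrapositive) that no $G^*$-connected $(\gamma')$-bad set of the relevant size range exists for a suitable small $\gamma'$. The key observation is that a bad set in $G^*$ pulls back to a bad set in $G$ of comparable weight: if $S\subseteq V(G^*)$ is $G^*$-connected, then $\widehat S\coloneqq f^{-1}(S)\subseteq V(G)$ is $G$-connected (here we use that the definition of $G^*$ via $f$ in \eqref{eq:f} was chosen precisely to preserve connectivity, and that each $U_i$ is $G$-connected), and the edge counts match up: $e_G(\widehat S)=e_{G^*}(S)+e_G(\widehat S\cap U,\text{within}) $ where the extra internal edges all lie inside $U$, while $|\partial_G(\widehat S)|=|\partial_{G^*}(S)|$ since contracted components either lie entirely inside or entirely outside $\widehat S$. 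Thus $\deg_G(\widehat S)=\deg_{G^*}(S)+2(e_G(\widehat S)-e_{G^*}(S))$, and the discrepancy $e_G(\widehat S)-e_{G^*}(S)\le e_G(U)\le \deg_G(U)$ is negligible by Lemma~\ref{lem:smallU}. Hence if $S$ were $\gamma'$-bad in $G^*$ with $\gamma'=\gamma/2=\alpha^2/8$, say, then for $n$ large $\widehat S$ would be $\gamma$-bad in $G$ (the tiny additive error cannot cross the gap), so by Lemma~\ref{lem:simplebad}\ref{item:bad2} $\widehat S$ is $\alpha$-thin or $\alpha^{-1}$-loaded in $G$, and by Remark~\ref{rem:no big bad} this forces $|\widehat S|<(\log n)^2$ or $|\widehat S|>(1-1/D^2)n=(1-\beta)n$.

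Now I would dispose of these two extremes. If $|\widehat S|<(\log n)^2$, then $|S|\le|\widehat S|<(\log n)^2$ as well, and $\deg_{G^*}(S)\le 2e_{G^*}(S)+|\partial_{G^*}(S)|\le 2\alpha^{-1}|\widehat S|+|\partial_{G^*}(S)|$ using \ref{item:few loaded}-style control (more carefully: $\widehat S$ sits inside a $G$-connected set $b(\widehat S)$ of size between $(\log n)^2$ and $(\log n)^3$ which is not $\alpha^{-1}$-loaded, so $e_G(\widehat S)\le e_G(b(\widehat S))\le \alpha^{-1}(\log n)^3$), so $\pi_{G^*}(S)=\deg_{G^*}(S)/(2e(G^*))=O((\log n)^3/n)$. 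Such $S$ only appear in the terms of the Fountoulakis--Reed sum with $2^{-j}=O((\log n)^3/n)$, i.e.\ the last $O(\log\log n)$ values of $j$; for those we use the trivial bound $\Phi_{G^*}(S)\ge |\partial_{G^*}(S)|/(2\deg_{G^*}(S))\ge 1/(2\deg_{G^*}(S))\ge 1/(4\alpha^{-1}(\log n)^3+\ldots)$, contributing $O(\log\log n\cdot (\log n)^6)=O(\log n)$ to the sum — this is where I'd be slightly careful, but it is routine. For the other extreme, if $|\widehat S|>(1-\beta)n$ then $V(G)\setminus\widehat S$ has size $<\beta n=n/D^2$; one checks $\pi_{G^*}(S)=1-\pi_{G^*}(V(G^*)\setminus S)$, and either the complement is large enough that we instead apply the whole argument to the complement (using $Q_{G^*}(S)=Q_{G^*}(S^c)$ and that conductance of $S$ and $S^c$ differ only through the $\pi(S)\pi(S^c)$ factor which is comparable when $\pi_{G^*}(S)$ is bounded away from $0$ and $1$), or $\pi_{G^*}(S)>1/2$ and the set simply does not appear in the sum. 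The main obstacle, and the place requiring the most care, is making the pullback bookkeeping in the previous paragraph fully rigorous — precisely tracking that $|\partial_G(\widehat S)|=|\partial_{G^*}(S)|$, that internal-edge discrepancies live in $U$, and that the multigraph structure of $G^*$ (parallel edges from multiple $G$-edges between a component $U_i$ and a single outside vertex, which \emph{are} counted in $E(G^*)$) is handled consistently — together with pinning down exactly which values of $\varepsilon$ and which constant $c(\alpha,D)$ one gets, so that the final constant $C$ comes out depending only on $\alpha,D,\varepsilon$ as claimed.
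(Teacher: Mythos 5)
There is a genuine gap in your handling of the small bad sets, and it is precisely the point where you wave your hands ("this is where I'd be slightly careful, but it is routine"). Your route to constraining the size of a bad set only invokes Remark~\ref{rem:no big bad}, i.e.\ the \emph{non-existence} of thin/loaded connected sets of size between $(\log n)^2$ and $(1-\beta)n$; you never use the defining property of $U(\alpha,D)$, namely that \emph{every} $\gamma$-bad $G$-connected set of size between $(\log n)^{1/5}$ and $(1-\beta)n$ lies in $U$ and is therefore contracted by $f$ to a single vertex of $G^*$. This is the key observation in the paper's proof (its Claim~\ref{claim:conductance2}): if $S^*$ is $G^*$-connected with $|S^*|\geq 2$ and $(\log n)^{1/5}\le |f^{-1}(S^*)|\le(1-\beta)n$, then $f^{-1}(S^*)$ cannot be $\gamma$-bad (it would be a connected subset of $U$, hence mapped to one vertex), so any bad $G^*$-connected set has preimage of size less than $(\log n)^{1/5}$, giving $e_{G^*}(S^*)\le e_G(f^{-1}(S^*))\le (\log n)^{2/5}$ and hence $\pi_{G^*}(S^*)<(\log n)^{1/2}/n$. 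With that threshold, the low-$\pi$ terms of the Fountoulakis--Reed sum form a geometric series dominated by $\bigl(4Dn\cdot (\log n)^{1/2}/n\bigr)^2=O(\log n)$. In your version the surviving bad sets may have preimages of size up to $(\log n)^2$, hence $\deg_{G^*}(S^*)$ up to order $(\log n)^3$ and $\pi_{G^*}(S^*)$ up to order $(\log n)^3/n$, and the corresponding terms contribute order $(\log n)^6$ each; your asserted "$O(\log\log n\cdot(\log n)^6)=O(\log n)$" is simply false, and the argument as written only yields $t_\mix(G^*)=O\bigl((\log n)^6\log\log n\bigr)$, which is not the claimed $C\log n$ (and would also not suffice for the $T_0=C\log n$ used later in the proof of Theorem~\ref{thm:main}).

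A secondary weak point is the large-set case. Applying "the whole argument to the complement" is problematic because the complement of a $G^*$-connected set need not be $G^*$-connected, while both the badness-transfer step and properties \ref{item:few suppressed}--\ref{item:few loaded} concern connected sets; moreover components of the complement could themselves be tiny bad sets, so the componentwise minimum of $|\partial|/\deg$ is not automatically bounded below. The paper avoids this entirely with its Claim~\ref{claim:conductance3}: using connectivity ($e_{G^*}(S^*)\ge|S^*|-1$) together with \ref{item: no dense set} applied to a set of size $3\beta n$ covering the preimage of the complement, it shows that a $G^*$-connected set with $\pi_{G^*}(S^*)\le 1/2$ has at most $(1-2\beta)n$ vertices, so the problematic large sets never enter the Fountoulakis--Reed sum. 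Your overall skeleton (apply Theorem~\ref{thm:FR} to $G^*$, bound $\pi_{\min}(G^*)$ via $e(G^*)\le Dn$, transfer goodness between $S^*$ and $f^{-1}(S^*)$ using $|\partial_G(f^{-1}(S^*))|=|\partial_{G^*}(S^*)|$ and $e_G\ge e_{G^*}$) matches the paper, but without the two ingredients above it does not reach the stated bound.
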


In order to prove Proposition~\ref{prop:mixing G*}, we will rely on the following lemma.

\begin{lemma} \label{lem:G*conductance}
For all\/ $D\geq 4$ and\/ $0<\alpha<\beta=1/D^2$, the following holds for all\/ $n$ sufficiently large.
Suppose\/ $G$ is an\/ $n$-vertex connected\/ $(\alpha,D)$-spreader graph.
Then, taking\/ $G^*=G^*(\alpha,D)$, for all\/ $G^*$-connected\/ $S^*\subseteq V(G^*)$ such that\/ ${(\log n)^{1/2}}/{n}\leq \pi_{G^*}(S^*)\leq{1}/{2}$ we have that\/ $\Phi_{G^*}(S^*)\geq{\alpha^2}/{8}$.
\end{lemma}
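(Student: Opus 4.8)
**Proof proposal for Lemma~\ref{lem:G*conductance}.**

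The plan is to lift the set $S^*$ back to a set $S$ in the original graph $G$, use that $S$ cannot be too badly connected in $G$ (because all the bad sets have been contracted away, so anything leaving a contracted vertex contributes fully to the boundary), and then transfer the good conductance estimate from $G$ to $G^*$. Concretely, given a $G^*$-connected set $S^*$ with $(\log n)^{1/2}/n \le \pi_{G^*}(S^*) \le 1/2$, define $S \coloneqq f^{-1}(S^*) = \{x \in V(G) : f(x) \in S^*\}$. Since $f$ maps connected pieces to single vertices and $S^*$ is $G^*$-connected, $S$ is $G$-connected. The key edge-counting observation is that, because $U^*$ is independent in $G^*$ and no edges of $G^*$ lie inside a single $U_i$, we have $|\del_{G}(S)| = |\del_{G^*}(S^*)|$ and $e_G(S) \ge e_{G^*}(S^*)$; in particular $\deg_G(S) \ge \deg_{G^*}(S^*)$, while the boundary is exactly preserved.

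Next I would control the size and stationary mass of $S$. From $\pi_{G^*}(S^*) \ge (\log n)^{1/2}/n$ and $e(G^*) \le e(G) \le Dn$ (the upper bound on $e(G)$ coming from \ref{item: no dense set} applied to $V(G)$, or rather from the fact that no set of size $\ge \alpha n$ is $D$-loaded, together with a short argument that then $e(G) < Dn$), one deduces $\deg_{G^*}(S^*) \ge 2e(G^*)\pi_{G^*}(S^*)$ is at least of order $(\log n)^{1/2}$, hence $|S| \ge |S^*| \ge (\log n)^{1/2}/(2D)$, which exceeds $(\log n)^{1/5}$ for large $n$. On the other side, $\pi_{G^*}(S^*) \le 1/2$ gives $\deg_{G^*}(S^*) \le e(G^*) \le e(G)$, and I need to check $|S| \le (1-\beta)n$: this should follow because $\pi_G(S)$ is controlled — using $\deg_G(S) = \deg_{G^*}(S^*) + (\text{contribution from }U\text{-vertices inside }S)$, and Lemma~\ref{lem:smallU} bounds the latter by $\deg_G(U) \le n e^{-(\log n)^{1/11}}$, so $\deg_G(S) \le e(G) + o(n) \le (1+o(1))e(G)$, hence $\pi_G(S)$ is bounded away from $1$ by a margin comparable to $\beta$, forcing $|S|/n \le 1-\beta$ once $n$ is large (here one uses $\deg_G(V \sm S) = 2e(G) - \deg_G(S)$ and that a set of size $> (1-\beta)n = (1-1/D^2)n$ would make its complement small, and the complement of $S$ in $G$ has $\deg_G(V\sm S) \ge |V \sm S|$ by connectivity).

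Now the set $S$ satisfies $(\log n)^{1/5} \le |S| \le (1-\beta)n$ and is $G$-connected, so by the definition of $\cS(\alpha,D)$ in \eqref{eq:cS} and the definition of $U$ in \eqref{eq: U def}, if $S$ were $\gamma$-bad with $\gamma = \alpha^2/4$ then $S \subseteq U$, whence $f(S) \subseteq U^*$ would be an independent set — but $S^* = f(S)$ is $G^*$-connected with $\pi_{G^*}(S^*) \ge (\log n)^{1/2}/n$, so $|S^*| \ge 2$ is impossible, and $|S^*| = 1$ forces $\pi_{G^*}(S^*) = \pi_{G^*}(u_i)$ which, since $u_i \in U^*$ is isolated-from-$U^*$ but still has edges, needs a separate quick check that its degree is too small (indeed $\deg_{G^*}(u_i) = |\del_G(U_i)| \le \alpha|U_i|$ if $U_i$ is thin, which is at most $\alpha(\log n)^2 \ll (\log n)^{1/2}$... — actually $|U_i|$ could be up to $(\log n)^2$ so this gives $\deg_{G^*}(u_i) \le \alpha(\log n)^2$, and $2e(G^*)\pi_{G^*}(S^*) \ge (\log n)^{1/2} \cdot 2e(G^*)/n \ge (\log n)^{1/2}$ only if $e(G^*) \ge n/2$, which holds; so this forces $\alpha(\log n)^2 \ge (\log n)^{1/2}$, true, so this case needs the better bound that $S^*$ has $\pi$-mass $\ge (\log n)^{1/2}/n$ and compare — I would instead simply note $S$ cannot be a single contracted vertex because then $|S| < (\log n)^2$ contradicts nothing, so the cleanest route is: $S$ is $\gamma$-good). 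Therefore $S$ is $\gamma$-good in $G$, i.e.\ $|\del_G(S)|/\deg_G(S) \ge \alpha^2/4$. Finally transfer: $\Phi_{G^*}(S^*) \ge |\del_{G^*}(S^*)|/(2\deg_{G^*}(S^*)) = |\del_G(S)|/(2\deg_{G^*}(S^*)) \ge |\del_G(S)|/(2\deg_G(S)) \ge \alpha^2/8$, using \eqref{eq:conductance simplified} for $G^*$ and $\deg_{G^*}(S^*) \le \deg_G(S)$.

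The main obstacle I expect is the careful bookkeeping in the second paragraph: verifying that the lift $S$ genuinely lands in the admissible size window $[(\log n)^{1/5}, (1-\beta)n]$ of Definition~\ref{def:surpressed}, since $\pi_{G^*}$-mass bounds must be converted to vertex-count bounds in $G$ through the (slightly shifted) degree sums, and the upper endpoint $(1-\beta)n$ is exactly the threshold in \ref{item:few suppressed}–\ref{item:few loaded}, leaving no slack — this is where one must use both connectivity of $G$ and the $o(n)$ bound $\deg_G(U) \le n e^{-(\log n)^{1/11}}$ from Lemma~\ref{lem:smallU}, together with $\alpha < \beta = 1/D^2$, rather than any cruder estimate.
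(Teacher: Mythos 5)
Your core transfer step (lift $S^*$ to $S=f^{-1}(S^*)$, observe $|\del_G(S)|=|\del_{G^*}(S^*)|$ and $e_G(S)\ge e_{G^*}(S^*)$, and note that a $\gamma$-bad $G$-connected set in the window would lie in $U$ and hence be contracted to a single vertex) is exactly the paper's first claim, and the final conductance transfer via \eqref{eq:conductance simplified} is fine. However, the overall strategy of first proving $(\log n)^{1/5}\le |S|\le(1-\beta)n$ and only then concluding goodness has a genuine gap at the lower end. Your deduction $|S|\ge|S^*|\ge(\log n)^{1/2}/(2D)$ from $\deg_{G^*}(S^*)\gtrsim(\log n)^{1/2}$ implicitly assumes a maximum degree $O(D)$; but \ref{item: no dense set} only bounds $e(G)\le Dn$, i.e.\ the \emph{average} degree, and a spreader graph may contain vertices of very large degree. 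Indeed, a set $S^*$ of two or three vertices, one of large degree, satisfies $\pi_{G^*}(S^*)\ge(\log n)^{1/2}/n$ while $|S|<(\log n)^{1/5}$, so the window simply cannot be established and the direct route breaks down. The paper handles this regime by contradiction: if $\Phi_{G^*}(S^*)<\alpha^2/8$ then $S^*$ is $\gamma$-bad in $G^*$, so by Lemma~\ref{lem:simplebad}\ref{item:bad1} one gets $4e_{G^*}(S^*)\ge\deg_{G^*}(S^*)=2e(G^*)\pi_{G^*}(S^*)\ge(\log n)^{1/2}/2$, which is incompatible with $e_{G^*}(S^*)\le e_G(S)\le|S|^2<(\log n)^{2/5}$ when $|S|<(\log n)^{1/5}$. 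In other words, the stationary lower bound is used to force many internal edges \emph{under the badness assumption}, not to force $|S|$ to be large; this is the missing idea.

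The upper end of the window is also not closed. You flag it as the main obstacle, but the inequality you offer ($\deg_G(V\setminus S)\ge|V\setminus S|$ by connectivity) points in the wrong direction, and no degree-to-size conversion will work here for the same high-degree reason. What is actually needed is property \ref{item: no dense set} applied to the small complement: if $|S^*|>(1-2\beta)n$, then connectivity of $S^*$ gives $e_{G^*}(S^*)\ge|S^*|-1>3n/4$; since $\pi_{G^*}(S^*)\le1/2$ and the two boundaries coincide, the complement satisfies $e_{G^*}(\bar S^*)\ge e_{G^*}(S^*)$; but $f^{-1}(\bar S^*)$ has size at most $2\beta n+|U|\le 3\beta n$ by Lemma~\ref{lem:smallU}, so \ref{item: no dense set} caps its edge count by $3D\beta n\le 3n/4$, a contradiction. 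Finally, your treatment of the case $|S^*|=1$ is muddled: the digression about $\deg_{G^*}(u_i)\le\alpha(\log n)^2$ leads nowhere, and the concluding assertion that ``$S$ is $\gamma$-good'' is unjustified (if $S=U_i$ it may well be $\gamma$-bad — that is why it was contracted). The correct and immediate observation is that a singleton has no internal edges, so $|\del_{G^*}(S^*)|=\deg_{G^*}(S^*)$ and $\Phi_{G^*}(S^*)\ge1/2\ge\alpha^2/8$ trivially.
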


\begin{proof}
Let $U\coloneqq U(\alpha,D)$ and $\gamma\coloneqq\alpha^2/4$.
Recall our definition of $f\colon V(G)\to V(G^*)$ from \eqref{eq:f}.
The proof will make use of the following claim.

\begin{claim}\label{claim:conductance2}
Any $G^*$-connected set $S^*\subseteq V(G^*)$ with $|S^*|\geq2$ and such that $(\log n)^{1/5}\leq|f^{-1}(S^*)|\leq(1-\beta)n$ is $\gamma$-good in $G^*$.
\end{claim}

\begin{claimproof}
Take any such $S^*\subseteq V(G^*)$ and let $S\coloneqq f^{-1}(S^*)=\{v\in V(G):f(v)\in S^*\}$.
Note that $S$ must be $G$-connected.
We claim that the bounds on $|S|$ imply that it must be $\gamma$-good in $G$.
Indeed, if $S$ was $\gamma$-bad in $G$, it would be a \mbox{$G$-connected} subset of $U$ (recall \eqref{eq:cS} and \eqref{eq: U def}) and so would be mapped by $f$ to a single vertex.
As $f$ is surjective and $|S^*|\geq2$, this is clearly not possible. 
It also follows from the definition of $G^*$ that $e_G(S)\ge e_{G^*}(S^*)$ and $|\del_G(S)|=|\del_{G^*}(S^*)|$. 
Therefore,
\[
\frac{|\partial_{G^*}(S^*)|}{\deg_{G^*}(S^*)}=
\frac{|\partial_{G^*}(S^*)|}{2e_{G^*}(S^*)+|\del_{G^*}(S^*)|}=\frac{|\partial_{G}(S)|}{2e_{G^*}(S^*)+|\del_{G}(S)|}\geq \frac{|\partial_{G}(S)|}{2e_G(S)+|\del_{G}(S)|}\geq \gamma, 
\]
and so $S^*$ is $\gamma$-good in $G^*$. 
\end{claimproof}

We will also need the fact that none of the sets from the statement are too large.

\begin{claim}\label{claim:conductance3}
Let $S^*\subseteq V(G^*)$ be a $G^*$-connected set such that $\pi_{G^*}(S^*)\leq 1/2$.
Then, $|S^*|\leq(1-2\beta)n$.
\end{claim}

\begin{claimproof}
Assume for a contradiction that there is such a set with $|S^*|>(1-2\beta)n$.
Letting $\bar S^*\coloneqq V(G^*)\setminus S^*$, we have that $\pi_{G^*}(S^*)+\pi_{G^*}(\bar S^*)=1$ and $|\del_{G^*}(S^*)|=|\del_{G^*}(\bar S^*)|$, implying that
\[e_{G^*}(\bar S^*)\geq e_{G^*}(S^*)\geq |S^*|-1\geq (1-3\beta)n>3n/4,\]
where we used here that $S^*$ is $G^*$-connected and the fact that $\beta=1/D^2\le 1/16$.
Let $T\subseteq V(G)$ be some set of size $3\beta n$ such that $f^{-1}(\bar S^*)\subseteq T$, noting that this is possible since, by Lemma~\ref{lem:smallU}, we have $|f^{-1}(\bar S^*)|\le |\bar S^*|+|U|\le 3 \beta n$.
Now, by \ref{item: no dense set}, we have that $e_{G^*}(\bar S^*)\le e_G(T)\le D|T|\le 3D\beta n\le 3n/4$, a contradiction, where we again appealed to the facts that $\beta=1/D^2$ and $D\geq 4$.
\end{claimproof}

Now suppose that there exists some $G^*$-connected set $S^*\subseteq V(G^*)$ with ${(\log n)^{1/2}}/{n}\leq \pi_{G^*}(S^*)\leq1/2$ and $\Phi_{G^*}(S^*)<\alpha^2/8$.
By the bound on the conductance, it follows from \eqref{eq:conductance simplified} that $S^*$ is $\gamma$-bad in $G^*$ and so, by Lemma~\ref{lem:simplebad}\ref{item:bad1}, we have $|\del_{G^*}(S^*)|\leq 2e_{G^*}(S^*)$.
This implies that
\begin{align}\label{eq:conductance1}
    4e_{G^*}(S^*)\geq \deg_{G^*}(S^*) &= \pi_{G^*}(S^*)\cdot 2e(G^*) \nonumber \\
    &\ge \frac{(\log n)^{1/2}}{n}2e(G^*) \nonumber\\ 
    &\ge \frac{(\log n)^{1/2}(2e(G)-\deg_G(U))}{n}\nonumber\\
    &\ge \frac{ (\log n)^{1/2}}{2},
\end{align}
where in the last inequality we used the fact that $\deg_G(U)=o(n)$ from Lemma~\ref{lem:smallU} and the fact that $e(G)\geq n/2$ as $G$ is connected.

Observe that, since $\gamma<1$ and $G^*$ is connected, no set $S^*\subseteq V(G^*)$ with $|S^*|=1$ can be $\gamma$-bad, so we must have $|S^*|\geq2$.
Then, Claim~\ref{claim:conductance2} implies that $S\coloneqq f^{-1}(S^*)$ has size $|S|<(\log n)^{1/5}$ or $|S|>(1-\beta)n$.
If $|S|>(1-\beta)n$, then $|S^*|\geq|S|-|U|>(1-2\beta)n$ (again by Lemma~\ref{lem:smallU}), and we know this cannot happen by Claim~\ref{claim:conductance3}, so we must have $|S|<(\log n)^{1/5}$.
As, trivially, any vertex set $S'\subseteq V(G)$ has $e_{G}(S')\le |S'|^2$, we have that $e_{G^*}(S^*)\leq e_G(S)\leq(\log n)^{2/5}$.
This contradicts \eqref{eq:conductance1}.
\end{proof}

With this, we can prove Proposition~\ref{prop:mixing G*}.

\begin{proof}[Proof of Proposition~\ref{prop:mixing G*}]
Let $G^*\coloneqq G^*(\alpha,D)$.
Observe that \ref{item: no dense set} implies $e(G^*)\leq e(G)\leq Dn$. 
By Lemma~\ref{lem:G*conductance}, for any $G^*$-connected set $S^*\subseteq V(G^*)$ with $(\log n)^{1/2}/n\leq\pi_{G^*}(S^*)\leq1/2$ we have $\Phi_{G^*}(S^*)\geq\alpha^2/8$.
Consider now any $G^*$-connected set $S^*\subseteq V(G^*)$ with $\pi_{G^*}(S^*)\leq(\log n)^{1/2}/n$ (in particular, $S^*\neq V(G^*)$).
The fact that $G^*$ is connected together with \eqref{equa:conductdef1} and \eqref{equa:conductdef2} ensures that
\[\Phi_{G^*}(S^*)\geq\frac{|\partial_{G^*}(S^*)|}{4e(G^*)\cdot\pi_{G^*}(S^*)}\geq\frac{1}{4Dn\pi_{G^*}(S^*)}.\]
Let $J$ denote the set of indices $j\geq1$ such that $2^{-j}\leq(\log n)^{1/2}/n$, and note that $\min(J)\leq\log n$. 
It then follows that
\begin{align*}
    \sum_{j=1}^{\lceil\log_2\pi_{\min}(G^*)^{-1}\rceil}\Phi_{G^*}^{-2}(2^{-j})&\leq\log n\cdot\frac{64}{\alpha^4}+\sum_{j\in J}2^{-2j}(4Dn)^2\\
    &\leq\log n\cdot\frac{64}{\alpha^4}+2\max_{j\in J}\{2^{-2j}\}\cdot16D^2n^2\leq\left(\frac{64}{\alpha^4}+32D^2\right)\log n,
\end{align*}
where in the last inequality we use the definition of $J$.
By Theorem~\ref{thm:FR} we have
\[
t_{\textrm{mix}}(G^*)\leq C_0\left(\frac{64}{\alpha^4}+32D^2\right)\log n,
\]
where $C_0$ is some absolute constant.
Since the total variation distance decreases exponentially fast after the mixing time (see, e.g.,~\cite[section~4.5]{LPW09}), we get
\[
t_{\textrm{mix}}(G^*,\eps)\leq C_0\left(\frac{64}{\alpha^4}+32D^2\right)\lceil\log_2(1/\eps)\rceil\log n,
\]
and the proposition holds by taking $C$ appropriately.
\end{proof}

\subsection{Hitting time of bad vertices} \label{sec:hitting}

Let $D\geq4$ and $0<\alpha<1/D^2$ and consider a connected $(\alpha,D)$-spreader graph $G$.
We now wish to study the hitting time to the set of bad vertices $U(\alpha,D)$ in $G$ and show that a.a.s.\ it is not too small.

\begin{lemma} \label{lem:hitting time}
Let $D\geq 4$ and $0<\alpha<1/D^2$.
Let $G$ be an $n$-vertex connected $(\alpha,D)$-spreader graph, and let $U=U(\alpha,D)$. 
Then,
\[\frac{1}{n}\sum_{v\in V(G)\setminus U}\PP[\tau_G(\mu_0^v,U)\leq(\log n)^2]=o(1).\]
\end{lemma}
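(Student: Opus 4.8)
The plan is to contract $U$ down to $G^*$ and transfer the question about hitting $U$ in $G$ to a question about hitting a single (new) vertex in $G^*$. Write $U = U_1 \cup \dots \cup U_t$ for the partition into components of $G[U]$, with $f$ as in \eqref{eq:f}, so that $f(U_i) = u_i$. If the lazy random walk on $G$ started at $v\notin U$ hits $U$ within $(\log n)^2$ steps, then it hits some $U_i$, and one checks that the walk on $G$ projected through $f$ behaves like the lazy random walk on $G^*$ up until that time — more precisely, a standard coupling keeps the two walks together until the first visit to $U$, at which point the $G^*$-walk is at the corresponding $u_i$. Hence $\PP[\tau_G(\mu_0^v,U)\le(\log n)^2] \le \sum_{i=1}^t \PP[\tau_{G^*}(\mu_0^v,u_i)\le(\log n)^2]$, and it suffices to bound $\frac{1}{n}\sum_{v\in V(G)\setminus U}\sum_{i=1}^t \PP[\tau_{G^*}(\mu_0^v,u_i)\le(\log n)^2]$. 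Summing over $i$ first, $\sum_i \PP[\tau_{G^*}(\mu_0^v,u_i)\le(\log n)^2] = \PP[\tau_{G^*}(\mu_0^v, U^*)\le (\log n)^2]$ where $U^* = f(U) = \{u_1,\dots,u_t\}$; alternatively one keeps the sum over $i$ and applies the next step to each $u_i$ separately — the key point is that $t$ is very small.

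Next I would verify that $G^*$ satisfies the hypotheses of the First Visit Time Lemma (Theorem~\ref{thm:MQS}) with, say, $T = (\log n)^3$. Hypothesis \ref{item:HP1} (fast mixing) follows from Proposition~\ref{prop:mixing G*}: the worst-case mixing time of $G^*$ is $O(\log n)$, so after $T = (\log n)^3$ steps the distance to $\pi_{G^*}$ is super-polynomially small, in particular $o(n^{-c})$ for any fixed $c>2$. For \ref{item:HP2}/\ref{item:HP3} and the variant \ref{item:HP2'} in Remark~\ref{rem:pi_max_not_needed}, recall $e(G^*)\le e(G)\le Dn$ and $\pi_{G^*}(w) = \deg_{G^*}(w)/(2e(G^*))$; the minimum degree in $G^*$ is at least $1$ (it is connected), giving $\pi_{\min}(G^*) \ge 1/(2Dn) = \omega(n^{-2})$, and the maximum degree of any $u_i$ is at most $\deg_G(U) \le n\exp(-(\log n)^{1/11})$ by Lemma~\ref{lem:smallU}, so $\pi_{G^*}(u_i) \le \exp(-(\log n)^{1/11})$ and $T\cdot\pi_{G^*}(u_i) = o(1)$. (If one routes through $U^*$ as a single target rather than individual $u_i$'s, one uses $\pi_{G^*}(U^*) \le \pi_G(U)\,\frac{e(G)}{e(G^*)}$, which is still super-polynomially small and dwarfs $T$.) Hence for each $u_i$, Theorem~\ref{thm:MQS} applies.

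Now I would use Remark~\ref{rem:replace pi}, which is tailored exactly for this situation: it says that if $t_0 = t_0(n)$ satisfies $\lambda_{u_i}^{t_0} = 1-o(1)$, then $\frac{1}{n}\sum_{v} \PP[\tau_{G^*}(\mu_0^v, u_i)\le t_0] = o(1)$. So the remaining task is to show $\lambda_{u_i}^{(\log n)^2} = 1-o(1)$, equivalently $1-\lambda_{u_i} = o(1/(\log n)^2)$. By \eqref{eq:prob_geom} in Theorem~\ref{thm:MQS}, $1-\lambda_{u_i} \sim \pi_{G^*}(u_i)/R_T(u_i)$, and since $R_T(u_i)\ge 1$ and $\pi_{G^*}(u_i) \le \exp(-(\log n)^{1/11}) = o(1/(\log n)^2)$, we indeed get $1-\lambda_{u_i} = o(1/(\log n)^2)$. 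Thus $\frac{1}{n}\sum_v \PP[\tau_{G^*}(\mu_0^v,u_i) \le (\log n)^2] = o(1)$ for each fixed $i$. Finally, summing: $t \le |U| \le n\exp(-(\log n)^{1/11})$ by Lemma~\ref{lem:smallU}, but crucially the bound $o(1)$ we get per $u_i$ from Remark~\ref{rem:replace pi} can be made uniform — tracing through its proof, the relevant "bad set" $B_i = \{v : \PP[\tau_{G^*}(\mu_0^v,u_i)\le (\log n)^2]\ge \varepsilon/2\}$ has $|B_i| \le 2Dn\cdot\pi_{G^*}(B_i)$ and $\pi_{G^*}(B_i) \le \PP[\tau_{G^*}(\pi_{G^*},u_i)\le (\log n)^2] \le $ (something super-polynomially small, since $1-\lambda_{u_i}$ is that small and \eqref{eq:hitting_geom} controls the hitting time from $\pi_{G^*}$). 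So each inner sum is bounded by $\varepsilon n/2 + |B_i| \le \varepsilon n$ for large $n$, and even after multiplying by $t \le n\exp(-(\log n)^{1/11})$... — here I need to be a bit more careful, and this is the main obstacle: the naive union bound over all $t$ components loses a factor of $t$, which is sub-linear but still large. The fix is to not expand into individual $u_i$ but to work with the single target set $U^*$ directly: replace $u$ by $U^*$ throughout Theorem~\ref{thm:MQS} and Remark~\ref{rem:replace pi} (the First Visit Time Lemma and its corollary work verbatim for hitting a fixed vertex obtained by contracting $U^*$ in $G^*$ to one more vertex, or one invokes the set-version directly), using $\pi_{G^*}(U^*) = o(1/(\log n)^2)$ in place of $\pi_{G^*}(u_i)$. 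Then there is no sum over $i$ at all: we directly get $\frac{1}{n}\sum_{v\in V(G)\setminus U}\PP[\tau_{G^*}(\mu_0^v, U^*)\le (\log n)^2] = o(1)$, which combined with the coupling inequality from the first paragraph gives the lemma. The main thing to get right, then, is the coupling argument identifying $\tau_G(\mu_0^v, U)$ with $\tau_{G^*}(\mu_0^v, U^*)$ (they are in fact equal in distribution up to the hitting time, since $f$ maps the $G$-walk exactly to the $G^*$-walk before $U$ is reached — edges inside $U$ are never traversed before $\tau$, and the lazy-walk transition probabilities out of $V(G)\setminus U$ are preserved by $f$), and then a clean application of (the set version of) Remark~\ref{rem:replace pi} to $G^*$.
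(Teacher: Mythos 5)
Your overall strategy is the paper's: couple the walk on $G$ with the walk on a contracted graph so that $\tau_G(\mu_0^v,U)$ becomes the hitting time of a single contracted vertex, then apply the First Visit Time Lemma together with Remark~\ref{rem:replace pi}, using that the stationary mass of the contracted vertex is super\-polylogarithmically small so that $\lambda^{(\log n)^2}=1-o(1)$. The coupling step, the bounds on $\pi$ of the target, and the use of \eqref{eq:prob_geom} and Remark~\ref{rem:replace pi} are all fine, and you correctly identified (and abandoned) the lossy union bound over the individual $u_i$.

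The genuine gap is in your verification of \ref{item:HP1}. Once you pass to a single target --- either by ``invoking a set version'' of Theorem~\ref{thm:MQS} (which is not what the cited theorem states) or, as you suggest parenthetically and as the paper does, by contracting $U^*$ in $G^*$ to one new vertex $u^*$, obtaining a multigraph $\hat G$ --- the fast-mixing hypothesis must be checked for \emph{that} chain, and it does not follow from Proposition~\ref{prop:mixing G*}, which concerns $G^*$. The walk on $\hat G$ is not a projection (lumping) of the walk on $G^*$: $U^*$ is a scattered independent set, and gluing it into one vertex creates new $\hat G$-connected sets (those containing $u^*$) whose preimages in $G^*$ may be disconnected, so no mixing or conductance bound transfers automatically. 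This is exactly why the paper needs a separate argument (Claim~\ref{claim:mixing hat G}): every $\hat G$-connected set $\hat S$ with $\pi_{\hat G}(\hat S)\le 1/2$ is decomposed into $G^*$-connected pieces, each piece is handled either by Lemma~\ref{lem:G*conductance} or by the trivial bound $|\partial|\ge 1$ when its degree is at most $\log n$, yielding $\Phi_{\hat G}(\hat S)\ge 1/(2\log n)$ and hence $t_{\mix}(\hat G)=O((\log n)^4)$ via Theorem~\ref{thm:FR}; only then is \ref{item:HP1} available (with $T$ a suitable power of $\log n$). Your proof as written asserts super-polynomial closeness to stationarity for the target chain without this step, so you should add an argument bounding the conductance (or mixing time) of $\hat G$ itself along these lines; the rest of your outline then goes through essentially as in the paper.
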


\begin{proof}
In this proof we will use a new auxiliary multigraph $\hat G$.
Let us introduce it here.
Consider the multigraph $G^*=G^*(\alpha,D)$, and let $U^*\coloneqq f(U)\subseteq V(G^*)$.
Recall that the definition of $U^*$ implies that it is an independent set in $G^*$.
Now, $\hat G$ is obtained from $G^*$ by contracting $U^*$ to a single new vertex $u^*$.
The fact that $U^*$ is an independent set in $G^*$ guarantees that $e(G^*)=e(\hat G)$, and since vertices outside $U^*$ do not see their degree changed by this operation, it follows that $\pi_{\hat G}(v)=\pi_{G^*}(v)$ for all $v\in V(G)\setminus U$ and $\pi_{\hat G}(u^*)=\pi_{G^*}(U^*)$.

Let $(X_t)_{t\geq0}$ and $(\hat X_t)_{t\geq0}$ denote lazy random walks on $G$ and $\hat G$ starting on some vertex $v\in V(G)\setminus U$.
Let us denote $\tau^v_U\coloneqq\tau_G(\mu_0^v,U)$ and $\hat\tau^v_{u^*}\coloneqq\tau_{\hat G}(\mu_0^v,u^*)$.
Define the natural coupling $(X_t,\hat X_t)_{t\geq 0}$ as follows: for any $t\geq 1$, while $X_t\notin U$, let $\hat X_t=X_t$; if there is a $t\geq 1$  such that $X_t\in U$, then for the smallest such $t$ we let $\hat X_t=u^*$; otherwise (that is, for all $t>\tau^v_U$), we let $X_t$ and $\hat X_t$ evolve independently.
Observe that this is indeed a valid coupling since for all $v\in V(G)\setminus U$ we have $e_G(v,U)=e_{\hat G}(v,u^*)$.
With this natural coupling, conditional on $\tau^v_U>t$, we have  $X_t=\hat X_t$ and so $\tau^v_U=\hat \tau^v_{u^*}$; in particular, for any $T_0\geq 0$ we have
\begin{equation}\label{eq:equivalence}
\PP[\tau^v_U>T_0]=\PP[\hat\tau^v_{u^*}> T_0].
\end{equation}

We will study the hitting times $\hat\tau^v_{u^*}$ using Theorem~\ref{thm:MQS} on $\hat{G}$ with $T=T(n)=(\log{n})^6$ and $c=3$.
We start with the following claim (which we make no efforts to optimise). 

\begin{claim}\label{claim:mixing hat G}
We have that $t_\mix (\hat G)\leq (\log n)^4$.
\end{claim}

\begin{claimproof}
First we prove that, for any $\hat G$-connected set $\hat S\subseteq V(\hat G)$ such that $\pi_{\hat G}(\hat S)\leq 1/2$, we have that $\Phi_{\hat G}(\hat S)\geq 1/(2\log n)$.
Indeed, fix such an $\hat S$ and define $S^*\subseteq V(G^*)$ as 
\begin{equation*} \label{eq:S*}
S^*= \begin{cases}
    \hat S & \text{if } u^*\notin \hat S, \\
    (\hat S\setminus\{u^*\})\cup U^* & \text{if } u^*\in \hat S.
\end{cases}
\end{equation*}
Further, let $S^*=S_1\cup \ldots \cup S_r$ for some $r\in \NN$ be a decomposition of $S^*$ into $G^*$-connected components (note that $r=1$ if $u^*\notin \hat S$). 
Now, as $\deg_{\hat G}(\hat S)=\deg_{G^*}(S^*)$ and $e(\hat G)=e(G^*)$, we have that $\pi_{G^*}(S^*)\leq 1/2$ and hence $\pi(S_i)\leq 1/2$ for all $i\in [r]$.
Moreover, as $U^*$ is an independent set in $G^*$, we have that $|\partial_{\hat G}(\hat S)|=\sum_{i=1}^r|\partial_{G^*}(S_i)|$ and $\deg_{\hat G}(\hat S)=\sum_{i=1}^r \deg_{G^*}(S_i)$. 

Returning to analyse the conductance of $\hat S$, from \eqref{eq:conductance simplified}, we have that 
\begin{equation} \label{eq:cond S hat 1}
    \Phi_{\hat G}(\hat S)\geq \frac{|\partial_{\hat G}(\hat S)|}{2 \deg_{\hat G}(\hat S)}=\frac{\sum_{i=1}^r|\partial_{G^*}(S_i)|}{2\sum_{i=1}^r\deg_{G^*}(S_i)}\geq \frac{1}{2}\min_{i\in [r]}\frac{|\partial_{G^*}(S_i)|}{\deg_{G^*}(S_i)}=\frac{|\partial_{G^*}(S_{i_0})|}{2\deg_{G^*}(S_{i_0})},
\end{equation}
letting $i_0\in [r]$ be a minimising index.
If $\deg_{G^*}(S_{i_0})\leq \log n$, then we are done due to the fact that $|\partial_{G^*}(S_{i_0})|\geq 1$ as $G^*$ is connected. 
If $\deg_{G^*}(S_{i_0})> \log n$, then $\pi_{G^*}(S_{i_0})= \deg_{G^*}(S_{i_0})/(2e(G^*))\geq (\log n)^{1/2}/n$, using that $e(G^*)\le e(G)\le Dn$ from \ref{item: no dense set}.
Therefore, using \eqref{eq:conductance simplified} and \eqref{eq:cond S hat 1}, we have that
\[\Phi_{\hat G}(\hat S)\geq \frac{|\partial_{G^*}(S_{i_0})|}{2\deg_{G^*}(S_{i_0})} =\frac{\Phi_{G^*}(S_{i_0})\deg_{G^*}(V(G^*)\setminus S_{i_0})}{2e(G^*)}\geq \frac{\Phi_{G^*}(S_{i_0})}{2}\geq \frac{\alpha^2}{16},\]
where we used Lemma~\ref{lem:G*conductance} in last inequality and the fact that $\pi_{G^*}(S_{i_0})=1-\pi_{G^*}(V(G^*)\setminus S_{i_0})\leq 1/2$ in the penultimate inequality.

So we have established that $\Phi_{\hat G}(\hat S)\geq 1/(2 \log n)$ for all $\hat G$-connected sets $\hat S$ with $\pi_{\hat G}(\hat S)\leq 1/2$.
Now notice that $\pi_{\min}(\hat G)\geq 1/(2e(\hat G))\geq 1/(2Dn)$ due to \ref{item: no dense set}, and hence, when applying Theorem~\ref{thm:FR}, there are logarithmically many terms in the sum.
This establishes the desired upper bound on $t_{mix}(\hat G)$.
\end{claimproof}

Using \eqref{equa:dTVdef} and Claim~\ref{claim:mixing hat G} and as the total variation distance decreases exponentially fast after the mixing time~(see, e.g.,~\cite[section~4.5]{LPW09}), we have
\[
\max_{x,y\in V(\hat G)}|\mu_{0}^xP_{\hat G}^T(y)-\pi_{\hat G}(y)|\leq 2 \max_{x\in V(\hat G)} d_{\mathrm{TV}}(\mu_0^x P^T_{\hat G},\pi_{\hat G})= o(n^{-3})
\] 
and \ref{item:HP1} is satisfied.

Let us now prove that $\pi_{\hat G}(u^*)$ is small.
By Remark~\ref{rem:pi_max_not_needed}, to prove our statement it suffices to have \ref{item:HP2'} for $u=u^*$.
By Lemma~\ref{lem:smallU} \eqref{eq:smallU3}, $\pi_G(U)=o((\log n)^{-6})$.
Moreover, as mentioned earlier, $\pi_{\hat G}(u^*)=\pi_{G^*}(U^*)$.
By Lemma~\ref{lem:stationary}, we have that 
\begin{align}\label{eq:bound_pi_u*}
    \pi_{\hat G}(u^*)= \pi_{G}(U)+O(d_{\mathrm{TV}}(\pi_G,\pi_{G^*}))= o((\log n)^{-6}).
\end{align}
It follows that $T\cdot\pi_{\hat G}(u^*)= o(1)$ and \ref{item:HP2'} holds.

Finally, since $G$ is a connected $(\alpha,D)$-spreader graph with $D$ fixed and $\alpha<1$, $\hat G$ is a connected multigraph with $e(\hat{G})\leq e(G)\leq Dn$, by \ref{item: no dense set}.
It follows by \eqref{equa:statdistdef1} that $\pi_{\min}(\hat G)\geq 1/2e(\hat G)=\omega(n^{-2})$ and \ref{item:HP3} is satisfied.

Now, recalling the relevant definitions from Theorem~\ref{thm:MQS} and letting $T_0\coloneqq\lceil(\log{n})^2\rceil$, as $n$ goes to infinity we have
\[\lambda_{u^*}^{T_0}= \left(1-(1+o(1))\frac{\pi_{\hat{G}}(u^*)}{R_{T}(u^*)}\right)^{T_0}\geq 1-(1+o(1))\frac{T_0\pi_{\hat{G}}(u^*)}{R_{T}(u^*)}= 1-o(1),\]
where we used $(1-x)^{T_0}\geq 1-T_0x$, \eqref{eq:bound_pi_u*} and $R_{T}(u^*)\geq 1$.
Therefore, appealing to Remark~\ref{rem:replace pi}, we have that 
\begin{align*}
\frac{1}{n}\sum_{v\in V(G)\setminus U}\PP[\hat\tau^v_{u^*}\leq  T_0 ] &\leq  \frac{1}{|V(\hat{G})|}\sum_{v\in V(G)\setminus U}\PP[\hat\tau^v_{u^*}\leq  T_0 ]\\
&\leq \frac{1}{|V(\hat{G})|}\sum_{v\in V(\hat{G})}\PP[\hat\tau^v_{u^*}\leq  T_0 ] =o(1).
\end{align*}
Together with \eqref{eq:equivalence}, this completes the proof of the lemma.
\end{proof}

\subsection{Proof of the main theorem}\label{sec:mainproof}

We are finally ready to prove Theorem~\ref{thm:main}.

\begin{proof}[Proof of Theorem~\ref{thm:main}]
Let $U=U(\alpha,D)$ and $G^*=G^*(\alpha,D)$.
Let $(X_t)_{t\geq0}$ and $(X^*_t)_{t\geq0}$ denote lazy random walks on $G$ and $G^*$, respectively.
For $x\in V(G)$, let $\tau^x_U\coloneqq\tau_G(\mu_{0}^x,U)$.
Similarly as in the proof of Lemma~\ref{lem:hitting time}, we consider a natural coupling $(X_t,X^*_t)_{t\geq0}$ of the random walks so that for $t<\tau_U$ we let $X^*_t=X_t$ and otherwise we let the walks evolve independently.

First, observe that, by Lemma~\ref{lem:stationary} and the triangle inequality (and adopting the abuse of notation introduced in \cref{sec:stationary}), for any $x\in V(G)$ we have
\begin{align}\label{eq:traing_ineq}
    d_{\textrm{TV}}(\mu_0^x P^t_G,\pi_G)\leq d_{\textrm{TV}}(\mu_0^x P^t_G,\pi_{G^*})+ d_{\textrm{TV}}(\pi_G,\pi_{G^*})= d_{\textrm{TV}}(\mu_0^x P^t_G,\pi_{G^*})+ o(1).
\end{align}

For every $x\in V(G)\setminus U$ and $y\in V(G)$, we can write
\begin{align*}
    \mu_0^x P^t_G(y) 
     &= \PP[X_t=y\mid X_0=x]\nonumber\\
     &= \PP[X_t=y,\tau_U>t\mid X_0=x] +\PP[X_t=y,\tau_U\leq t\mid X_0=x]\nonumber\\
     &= \PP[X^*_t=y,\tau_U>t\mid X_0=x] +\PP[X_t=y,\tau_U\leq t\mid X_0=x]\nonumber\\
     &\leq \mu_{0}^x P^t_{G^*}(y) +\PP[X_t=y,\tau_U\leq t\mid X_0=x],
\end{align*}
where we let $\mu_{0}^x P^t_{G^*}(y)=0$ if $y\in U$. 
Let $A\coloneqq\{y\in V(G): \mu_{0}^x P^t_{G}(y)\geq \pi_{G^*}(y)\}$.
It follows that 
\begin{align}\label{eq:from_G_to_G^*2}
    d_{\textrm{TV}}(\mu_{0}^x P^t_G,\pi_{G^*})
     &= \sum_{y\in A} (\mu_{0}^xP^t_{G}(y)- \pi_{G^*}(y))\nonumber\\
     &\leq \sum_{y\in A} (\mu_{0}^xP^t_{G^*}(y)- \pi_{G^*}(y)) + \sum_{y\in A} \PP[X_t=y,\tau_U\leq t\mid X_0=x]\nonumber\\
     &\leq d_{\textrm{TV}}(\mu_{0}^x P^t_{G^*},\pi_{G^*}) + \PP[\tau^x_U\leq t].
\end{align}

Let $C>0$ be the constant given by Proposition~\ref{prop:mixing G*} with $\eps/2$ playing the role of $\eps$, and let $T_0\coloneqq \lceil C\log {n}\rceil$. 
By Proposition~\ref{prop:mixing G*}, we have that $d_{\textrm{TV}}(\mu_{0}^x P^{T_0}_{G^*},\pi_{G^*})\leq\eps/2$.
Combining these bounds with \eqref{eq:traing_ineq} and \eqref{eq:from_G_to_G^*2}, we obtain for all $x\in V(G)\setminus U$ that
\begin{align*}
    d_{\textrm{TV}}(\mu_{0}^x P^{T_0}_G,\pi_G)\leq \eps/2+\PP[\tau^x_U\leq T_0]+o(1).
\end{align*}
By Lemma~\ref{lem:smallU}~\eqref{eq:smallU1} and Lemma~\ref{lem:hitting time}, we conclude that
\begin{align*}
    \frac{1}{n}\sum_{x\in V(G)} d_{\textrm{TV}}(\mu_{0}^x P^{T_0}_G,\pi_G)\leq \frac{|U|}{n}+\frac{n-|U|}{n}\left(\frac{\eps}{2}+o(1)\right)+\frac{1}{n}\sum_{x\in V(G)\setminus U}\PP[\tau^x_U\leq T_0] \leq \eps
\end{align*}
and $\bar t_{\mix}(G,\eps)\leq T_0$, concluding the proof.
\end{proof}

\section{Smoothed analysis on connected graphs} \label{sec:smoothed}

We next want to show applications of Theorem~\ref{thm:main}.
We use this section to prove Theorem~\ref{thm:smoothed}.

\begin{proof}[Proof of Theorem~\ref{thm:smoothed}]
Let $D\coloneqq2(\Delta +1+\delta)$  and $0<\alpha<\delta/D^2$ be a sufficiently small constant. 
If $G'$ is a connected $(\alpha,D)$-spreader graph, the statement follows from Theorem~\ref{thm:main}.
Since $G'$ is connected by assumption, it suffices to show that a.a.s.\ $G'$ is an $(\alpha,D)$-spreader graph.

We need to verify that \ref{item:few suppressed}, \ref{item:few loaded} and \ref{item: no dense set} hold a.a.s.\ in $G'=G\cup R$.
The fact that \ref{item:few loaded} and \ref{item: no dense set} hold a.a.s.\ follows directly from the fact that $G$ is $\Delta$-degenerate and that $e(R[S])<\max\{2,2\delta\}|S|$ for all $S\subseteq V(R)$ (see, for example, \cite[Lemma 8]{KRS15}). 
The fact that property~\ref{item:few suppressed} holds a.a.s.\ in $G'$ follows from the proof of \cite[Theorem~3]{KRS15}.
Indeed, for each $(\log n)^{1/5}\leq k\leq(1-1/D^2)n$, let $X_k$ denote the number of $G'$-connected $\alpha$-thin sets $S\subseteq V(G)$ with $|S|=k$.
Then (see~\cite[eq.~(4)]{KRS15} and the claim immediately after), one can check that the expected number of such sets satisfies 
\[\mathbb{E}[X_k]\leq\sum_{m=1}^{\alpha k}\sum_{b=m}^{\alpha k}n^m\exp\left(C\alpha\log(1/\alpha)k\right)\left(\frac{\delta}{n}\right)^{m-1}\mathbb{P}\left[\mathrm{Bin}\left(k(n-k),\frac{\delta}{n}\right)<\alpha k\right],\]
where $C$ is some absolute constant.
Following again the proof in~\cite{KRS15} and choosing $\alpha$ sufficiently small and $n$ sufficiently large, one concludes that
\[\mathbb{E}[X_k]\leq k^2n\exp\left(\left(C\alpha\log(1/\alpha)-\frac{\delta}{8D^2}\right)k\right)\leq n\exp\left(-\frac{\delta}{20D^2}k\right).\]
Property \ref{item:few suppressed} then follows by Markov's inequality and a union bound over all $(\log n)^{1/5}\leq k\leq(1-1/D^2)n$. 
\end{proof}

\section{Random subgraphs of expanders}\label{sec:expanders}

In order to prove Theorem~\ref{thm:expanders}, we will rely on several known properties of the giant component of a random subgraph of an $(n,d,\lambda)$-graph.
Recall from Section \ref{sec:notation} that when we refer to asymptotic statements holding in \emph{an} $(n,d,\lambda)$-graph, implicitly what is meant is that the  statement holds for any sequence $(G_n)_{n\geq 1}$ of $(n,d,\lambda)$-graphs that satisfy the stated condition.

\begin{lemma}\label{lem:expanderaux}
Let $\delta>0$ be a sufficiently small constant and let $G$ be an $(n,d,\lambda)$-graph with $\lambda\leq\delta^4d$.
Let $p=(1+\delta)/d$ and let $L_1$ be a largest component in $G_p$.
Then, a.a.s.\ the following properties hold:
\begin{enumerate}[label = (\alph*)]
    \item \label{list:a} $L_1$ has $(1+o(1))(2\delta+g(\delta)) n$ vertices, where $g(\delta)=o(\delta)$  as $\delta$ tends to $0$. 
    \item \label{list:b} There exists some absolute constant $c>0$ such that, for any $G_p$-connected $S\subseteq V(L_1)$ with $16 \log n/\delta^2\leq |S|\leq\delta^2n/50$, we have that 
    \begin{equation*}\label{cond:b}
        |\del_{G_p}(S)|\ge \frac{c\delta^2|S|}{\log(1/\delta)}.
    \end{equation*}
    \item \label{list:c} There exists some absolute constant $c'>0$ such that, for \emph{any} $S\subseteq V(L_1)$ with ${\delta^2n}/{50}\le |S|\le {12\delta n}/{11}$, we have that 
    \begin{equation*}\label{cond:c}
        |\del_{G_p}(S)|\ge \frac{c'\delta^2|S|}{\log(1/\delta)}.
    \end{equation*}
\end{enumerate}
\end{lemma}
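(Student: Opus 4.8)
The plan is to establish each of the three items by invoking known structural results about random subgraphs of $(n,d,\lambda)$-graphs due to Frieze, Krivelevich and Martin~\cite{FKM04} and subsequent refinements, together with a direct edge-expansion argument in the supercritical regime. For \ref{list:a}, I would simply quote the phase transition result: for $\lambda\leq\eps^4 d$ and $p=(1+\eps)/d$, a.a.s.\ $\ell_1(G_p)=(1+o(1))y(\eps)n$ for the survival probability $y(\eps)$ of the associated branching process, and then note that $y(\eps)=2\eps+g(\eps)$ with $g(\eps)\to 0$ as $\eps\to 0$ by expanding the implicit equation $1-y=\nume^{-(1+\eps)y}$ (equivalently, working with the Poisson branching process that $G_p$ locally resembles when $\lambda=o(d)$, using the expander mixing lemma to control neighbourhood sizes). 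This is bookkeeping on the known formula.

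For items \ref{list:b} and \ref{list:c}, the core tool is the expander mixing lemma applied to $G$: for any disjoint $S,T\subseteq V(G)$, $\bigl|e_G(S,T)-\tfrac{d}{n}|S||T|\bigr|\leq \lambda\sqrt{|S||T|}$. The strategy is to show that in the host graph $G$, every set $S$ in the relevant size window has many edges leaving it, and then argue that enough of those edges survive in $G_p$ and that they go to vertices of $L_1$. Concretely, for \ref{list:c}, a set $S$ with $\eps^2 n/50\le |S|\le 12\eps n/11$ has complement of size $\Omega(n)$, so by the mixing lemma $e_G(S,V\setminus S)\geq \tfrac{d}{n}|S|(n-|S|)-\lambda\sqrt{|S|(n-|S|)}\geq (1-o(1))\tfrac{d(n-|S|)}{n}|S|\geq \Omega(d|S|)$ using $\lambda\leq\eps^4 d\ll d$. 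Each such edge survives in $G_p$ independently with probability $p=\Theta(1/d)$, so the expected boundary size in $G_p$ is $\Omega(|S|)$; a Chernoff bound plus a union bound over all (at most $2^n$, but more carefully $\binom{n}{|S|}$) such sets gives $|\del_{G_p}(S)|\geq \Omega(\eps^2|S|)$ a.a.s., and one absorbs the $\log(1/\eps)$ loss into constants — this is where the precise dependence traced through~\cite{FKM04,DK22} matters, so I would cite the analogous statement there rather than re-derive the exact constant. For \ref{list:b}, the complement of $S$ is no longer $\Omega(n)$ once $|S|$ can be as small as $16\log n/\eps^2$, but it is still the case that $|S|\leq \eps^2 n/50\ll n$, so the mixing-lemma bound $e_G(S,V\setminus S)\geq(1-o(1))d|S|$ still holds; the subtlety is ensuring the surviving boundary edges land in $V(L_1)$ rather than in small components, which is handled by the standard fact (again from the sprinkling/exploration analysis in~\cite{FKM04}) that a.a.s.\ all components of $G_p$ other than $L_1$ have size $O(\log n)$, so a connected $S$ of size $\geq 16\log n/\eps^2$ that has any surviving boundary edge at all must be attached to $L_1$. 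The $G_p$-connectedness hypothesis on $S$ in \ref{list:b} is used precisely to invoke the union bound over connected sets (of which there are at most $n\cdot (ed)^{|S|}$ by the degeneracy/degree bound, far fewer than $\binom{n}{|S|}$ when $d$ is large), which is what makes the concentration survive down to logarithmic set sizes.

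The main obstacle I anticipate is item \ref{list:b}: getting edge expansion to hold simultaneously for \emph{all} $G_p$-connected sets down to size $\Theta(\log n)$ requires the union bound over connected sets to beat the exponentially small failure probability from the Chernoff estimate, and the two exponents ($c_1\eps^2|S|/\log(1/\eps)$ from concentration versus $|S|\log d$ from the count of connected sets) must be balanced — this forces the $\log(1/\eps)$ factor in the denominator and is the reason the statement is phrased with an unspecified absolute constant $c$. I expect the cleanest route is not to redo this balancing but to cite the corresponding expansion estimates already established in the mixing-time analysis of Diskin and Krivelevich~\cite{DK22} (their proof that $t_\mix(L_1(G_p))=O(\log^2 n)$ goes through exactly these conductance-type bounds), and to supply only the short additional argument for \ref{list:a} and the routine translation from their setup to the thin/loaded language of Definition~\ref{def:surpressed} that the subsequent application of Theorem~\ref{thm:main} will require. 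A secondary technical point is that items \ref{list:b} and \ref{list:c} together with \ref{list:a} must cover the full range up to $(1-1/D^2)\ell_1(L_1)$ needed to verify \ref{item:few suppressed}; I would check that $12\eps n/11$ comfortably exceeds $(1-1/D^2)\cdot(2\eps+g(\eps))n$ for the choice of $D$ made in the proof of Theorem~\ref{thm:expanders}, so that no size window is left uncontrolled.
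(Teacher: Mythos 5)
Your proposal ultimately lands on exactly the paper's proof, which is a pure citation: item \ref{list:a} from \cite[Theorem~1]{FKM04} (with the discussion after \cite[Theorem~1.1]{DK22}), item \ref{list:b} from \cite[Theorem~1~(1)]{DK22}, and item \ref{list:c} from \cite[Theorem~2]{DK22}, so the approach is essentially the same. Be aware that your auxiliary sketch for \ref{list:c} would not close as written --- a lower-tail Chernoff bound of $\exp(-\Omega(|S|))$ does not beat the union bound $\binom{n}{|S|}\leq\exp\bigl(O(|S|\log(1/\eps))\bigr)$ for small $\eps$ --- but since you explicitly defer to the cited expansion estimates of \cite{DK22} rather than re-deriving them, this does not affect the proposal.
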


\begin{proof}
Statement \ref{list:a} follows from \cite[Theorem~1]{FKM04}; see also the discussion following \cite[Theorem~1.1]{DK22}.
Statement \ref{list:b} is a consequence of \cite[Theorem~1~(1)]{DK22},  while \ref{list:c} is given in \cite[Theorem~2]{DK22}.
\end{proof}

With this, we can prove Theorem~\ref{thm:expanders}.

\begin{proof}[Proof of Theorem~\ref{thm:expanders}]
Let $L_1\coloneqq L_1(G_p)$.
Fix $D\coloneqq12$, $c_0\coloneqq\min\{c,c'\}$ (where $c$ and $c'$ are the absolute constants from Lemma~\ref{lem:expanderaux}\ref{list:b} and \ref{list:c}) and $\alpha\coloneqq c_0\delta^2/(D^2\log(1/\delta))$. 
By Theorem~\ref{thm:main}, it suffices to show that a.a.s.\ $L_1$ is an $(\alpha,D)$-spreader graph.
That is, we need to show that (for sufficiently small $\delta>0$) a.a.s.\ $L_1$ satisfies properties \ref{item:few suppressed}--\ref{item: no dense set}.

We are first going to show that \ref{item:few suppressed} and \ref{item:few loaded} hold a.a.s.\ in $G_p$, rather than $L_1$, for sets of size at least $(\log n)^{1/6}$.
Similarly as happened in the proof of Theorem~\ref{thm:smoothed}, in this case properties \ref{item:few suppressed} (for $|S|\leq \delta^2 n/50$) and \ref{item:few loaded} can be obtained by following the proofs of \cite[Theorem~1~(1)]{DK22} and \cite[Lemma~2.4]{DK22}, respectively.
Let us give here a brief sketch.
Let us first consider \ref{item:few suppressed} (for $|S|\leq\delta^2n/50$).
For each $(\log n)^{1/6}\leq k\leq\delta^2n/50$, let $X_k$ denote the number of $G_p$-connected sets $S\subseteq V(G)$ with $|S|=k$ which are $\alpha$-thin.
Then, following \cite[Theorem~1~(1)]{DK22}, we have $\mathbb{E}[X_k]\leq3n\exp(-\delta^2k/8)$.
By Markov's inequality and a union bound over all $(\log n)^{1/6}\leq k\leq\delta^2n/50$, we conclude that a.a.s.
\begin{enumerate}[label=(d)]
    \item\label{item:final1} for all $(\log n)^{1/6}\leq k \leq\delta^2 n/50$, the number of $G_p$-connected $\alpha$-thin sets $S\subseteq V(G)$ with $|S|=k$ is less than $n\nume^{-\sqrt{k}}$.
\end{enumerate}
Similarly, for each $k\geq(\log n)^{1/6}$, one can check from the proof of \cite[Lemma~2.4]{DK22} that, if we let $Y_k$ denote the number of $G_p$-connected sets $S\subseteq V(G)$ with $|S|=k$ such that $e_{G_p}(S)\geq10k$, then $\mathbb{E}[Y_k]\leq n\exp(-2k)$.
Again by Markov's inequality and a union bound over all $k\geq(\log n)^{1/6}$, we conclude that a.a.s.
\begin{enumerate}[label=(e)]
    \item\label{item:final2} for all $(\log n)^{1/6}\le k \leq n$, the number of $G_p$-connected $\alpha^{-1}$-loaded sets  $S\subseteq V(G)$ with $|S|=k$ is less than $n\nume^{-\sqrt{k}}$.
\end{enumerate}

We next work towards property \ref{item: no dense set}.
We are going to show that no set $S\subseteq V(G)$ with $|S|\ge \delta \alpha n$ is $D$-loaded in $G_p$.
Fix some $\delta \alpha n\le k \le n$ and let $\eta\coloneqq k/n$.
If $\delta>0$ is sufficiently small, we have that $\delta^4<\alpha\delta\leq\eta$.
Now, by the expander mixing lemma (see, for example, \cite[Lemma 2.1]{DK22}), for any set $S\subseteq V(G)$ with $|S|=k=\eta n$ we have that
\[e_G(S)\le \frac{dk^2}{n}+\lambda k\le (\eta +\delta^4) dk\le 2\eta dk,\]
using that $\lambda\leq\delta^4d$.
Hence, the probability that $S$ contains $Dk$ edges in $G_p$ is at most 
\[\binom{2\eta dk}{Dk}p^{Dk}\leq\left(\frac{2\eta d \nume}{D}\right)^{Dk}\left(\frac{1+\delta}{d}\right)^{Dk}\leq\left(\frac{6\eta}{D}\right)^{Dk}.\]
Taking a union bound over all possible sets of size $k$, we have that the probability of there existing a $D$-loaded set of size $k$ in $G_p$ is at most 
\[\binom{n}{k}\left(\frac{6\eta}{D}\right)^{Dk}\leq\left(\frac{n\nume}{k}\right)^k\left(\frac{6\eta}{D}\right)^{Dk}=\left(\frac{\nume}{\eta}\left(\frac{6\eta}{D}\right)^{D}\right)^k=\left(\nume\eta^{D-1}\left(\frac{1}{2}\right)^{D}\right)^k\leq\left(\frac{1}{2}\right)^{\alpha\delta n}.\]
By a union bound over all $\alpha \delta n\le k\le n$ we conclude that a.a.s.
\begin{enumerate}[label=(f)]
    \item\label{item:final3} no set $S\subseteq V(G)$ with $|S|\geq\delta\alpha n$ is $D$-loaded in $G_p$.
\end{enumerate}

Condition on the event that Lemma~\ref{lem:expanderaux}\ref{list:a} and \ref{list:c} as well as \ref{item:final1}, \ref{item:final2} and \ref{item:final3} hold in $G_p$, which a.a.s.\ occurs.
Let $n'\coloneqq|V(L_1)|=(1+o(1))(2\delta+g(\delta))n$ by \ref{list:a}.
It follows that $(\log n')^{1/5}\geq(\log n)^{1/6}$ and so, by \ref{item:final1}, \ref{item:few suppressed} holds in $L_1$ for sets of size at most $\delta^2n/50$; similarly, \ref{item:few loaded} holds by \ref{item:final2}, and \ref{item: no dense set} holds by \ref{item:final3}. 
Thus, it only remains to establish \ref{item:few suppressed} for sets $S\subseteq V(L_1)$ such that $\delta^2 n/50\leq|S|\leq(1-1/D^2)n'$.
For $\delta^2 n/50\leq|S|\leq12\delta n/11$, this is immediate from \ref{list:c}, and we can also use \ref{list:c} for larger sets $S$.
Indeed, suppose ${12\delta n}/{11}\leq|S|\leq(1-1/D^2)n'$ and let $\bar S\coloneqq V(L_1)\setminus S$.
It follows from \ref{list:a}, by taking a sufficiently small $\delta$,  that
\[|\bar{S}|=|L_1|-|S|\leq n'-\frac{12}{11}\delta n
<\frac{n'}{2}\le \frac{12}{11}\delta n,\]
where in the last two inequalities we use that  $\frac{\delta}{2\delta+g(\delta)}$ tends to $1/2$  as $\delta$ tends to $0$ due to the fact that $g(\delta)=o(\delta)$.
We also have that $|\bar S|\geq n'/D^2\geq|S|/D^2\geq \delta^2 n/50$, using also here that $\delta$ is sufficiently small.
Hence, we can apply \ref{list:c} to $\bar S$ and we obtain that 
\[|\del_{L_1}(S)|=|\del_{L_1}(\bar S)|\ge \frac{c'\delta^2|\bar S|}{\log(1/\delta)}\ge \frac{c'\delta^2|S|}{D^2\log(1/\delta)}\geq\alpha|S|.\qedhere\]
\end{proof}

\section{Open problems} \label{sec:conc}

Theorem~\ref{thm:main} is only effective on graphs where the mixing is slowed down by few small bottlenecks.
This is the case in the two applications presented.
Nevertheless, there are other cases where both small and large bottlenecks exist.
It would be interesting to study average-case mixing times in such scenarios and determine which improvement with respect to the worst-case can be attained.
One such example is the small-world model of Kleinberg~\cite{K00}, whose mixing time has been studied in~\cite{DGGJV20}.

In recent years, the theory of random walks in random directed graphs has attracted a considerable amount of attention.
As in the case of random regular graphs, under mild conditions on the bidegree sequence, the mixing time is logarithmic~\cite{BCS18,CCPQ22}.
From the point of view of smoothed analysis, a natural question is whether randomly perturbing a deterministic strongly connected digraph can yield logarithmic mixing time.
Conductance-based bounds such as Jerrum-Sinclair and Fountoulakis-Reed are not valid in the non-reversible setting, which requires new ideas.
Finally, we mention an analogous result to the mixing time in randomly perturbed connected graphs by Krivelevich, Reichman and Samotij~\cite{KRS15}, for graphs perturbed by a random perfect matching, has been obtained by Hermon, Sly and Sousi~\cite{HSS22}.
Considering such a model in the directed setting would also be interesting.

\medskip

\noindent\textbf{Acknowledgements.} The authors would like to thank Matteo Quattropani for fruitful discussions on the First Visit Time Lemma (FVTL).  They would also like to thank the anonymous referees for their insightful comments, in particular for spotting a misuse of the FVTL and for pointing out the non-contractivity of the average mixing time~(see Remark~\ref{rem:contractive}).

\bibliographystyle{mystyle} 
\bibliography{RandWalkBib}

\begin{thebibliography}{43}
\newcommand{\enquotenew}[1]{`#1'}
\providecommand{\natexlab}[1]{#1}
\providecommand{\url}[1]{\texttt{#1}}
\providecommand{\urlprefix}{URL }
\providecommand{\doi}[1]{\textsc{doi}:
  \href{https://doi.org/#1}{\nolinkurl{#1}}}
\providecommand*{\eprint}[2][]{arXiv:
  \href{https://arxiv.org/abs/#2}{\nolinkurl{#2}}}

\bibitem[{Addario-Berry and Lei(2015)}]{ABL15}
L.~Addario-Berry and T.~Lei, {The mixing time of the {Newman}-{Watts}
  small-world model}. \emph{Adv. Appl. Probab.} 47.1 (2015),  37--56,
  \doi{10.1239/aap/1427814580}.

\bibitem[{Ajtai, Koml{\'o}s and Szemer{\'e}di(1982)}]{AKS82}
M.~Ajtai, J.~Koml{\'o}s and E.~Szemer{\'e}di, {Largest random component of a
  $k$-cube}. \emph{Combinatorica} 2 (1982),  1--7, \doi{10.1007/BF02579276}.

\bibitem[{Arthur, Manthey and R{\"o}glin(2011)}]{AMR11}
D.~Arthur, B.~Manthey and H.~R{\"o}glin, {Smoothed analysis of the
  {{\(k\)}}-means method}. \emph{J. ACM} 58.5 (2011),  No.~19,
  \doi{10.1145/2027216.2027217}.

\bibitem[{Ben-Hamou and Salez(2017)}]{BS17}
A.~Ben-Hamou and J.~Salez, {Cutoff for nonbacktracking random walks on sparse
  random graphs}. \emph{Ann. Probab.} 45.3 (2017),  1752--1770,
  \doi{10.1214/16-AOP1100}.

\bibitem[{Benjamini, Kozma and Wormald(2014)}]{BKW14}
I.~Benjamini, G.~Kozma and N.~Wormald, {The mixing time of the giant component
  of a random graph}. \emph{Random Struct. Algorithms} 45.3 (2014),  383--407,
  \doi{10.1002/rsa.20539}.

\bibitem[{Berestycki, Lubetzky, Peres and Sly(2018)}]{BLPS18}
N.~Berestycki, E.~Lubetzky, Y.~Peres and A.~Sly, {Random walks on the random
  graph}. \emph{Ann. Probab.} 46.1 (2018),  456--490, \doi{10.1214/17-AOP1189}.

\bibitem[{Bhaskara, Charikar, Moitra and Vijayaraghavan(2014)}]{BCMV14}
A.~Bhaskara, M.~Charikar, A.~Moitra and A.~Vijayaraghavan, {Smoothed analysis
  of tensor decompositions}. \emph{Proceedings of the Forty-Sixth Annual ACM
  Symposium on Theory of Computing}, STOC '14, Association for Computing
  Machinery, New York, NY, USA (2014)  594--603, \doi{10.1145/2591796.2591881}.

\bibitem[{Bohman, Frieze and Martin(2003)}]{BFM03}
T.~Bohman, A.~Frieze and R.~Martin, {How many random edges make a dense graph
  {H}amiltonian?} \emph{Random Struct. Algorithms} 22.1 (2003),  33--42,
  \doi{10.1002/rsa.10070}.

\bibitem[{Bordenave, Caputo and Salez(2018)}]{BCS18}
C.~Bordenave, P.~Caputo and J.~Salez, {Random walk on sparse random digraphs}.
  \emph{Probab. Theory Relat. Fields} 170.3 (2018),  933--960,
  \doi{10.1007/s00440-017-0796-7}.

\bibitem[{B\"{o}ttcher, Han, Kohayakawa, Montgomery, Parczyk and
  Person(2019)}]{BHKMPP19}
J.~B\"{o}ttcher, J.~Han, Y.~Kohayakawa, R.~Montgomery, O.~Parczyk and
  Y.~Person, {Universality for bounded degree spanning trees in randomly
  perturbed graphs}. \emph{Random Struct. Algorithms} 55.4 (2019),  854--864,
  \doi{10.1002/rsa.20850}.

\bibitem[{B\"{o}ttcher, Montgomery, Parczyk and Person(2020)}]{BMPP20}
J.~B\"{o}ttcher, R.~Montgomery, O.~Parczyk and Y.~Person, {Embedding spanning
  bounded degree graphs in randomly perturbed graphs}. \emph{Mathematika} 66.2
  (2020),  422--447, \doi{10.1112/mtk.12005}.

\bibitem[{{Cai}, {Caputo}, {Perarnau} and {Quattropani}(2021)}]{CCPQ22}
X.~S. {Cai}, P.~{Caputo}, G.~{Perarnau} and M.~{Quattropani}, {{Rankings in
  directed configuration models with heavy tailed in-degrees}}. \emph{arXiv
  e-prints}  (2021). \eprint{2104.08389}.

\bibitem[{Chung and Lu(2001)}]{CL01}
F.~Chung and L.~Lu, {The diameter of sparse random graphs}. \emph{Adv. Appl.
  Math.} 26.4 (2001),  257--279, \doi{10.1006/aama.2001.0720}.

\bibitem[{Coja-Oghlan, Feige, Frieze, Krivelevich and Vilenchik(2009)}]{CFFK09}
A.~Coja-Oghlan, U.~Feige, A.~Frieze, M.~Krivelevich and D.~Vilenchik, {On
  smoothed {$k$}-{CNF} formulas and the \texttt{Walksat} algorithm}.
  \emph{Proceedings of the Twentieth Annual ACM-SIAM Symposium on Discrete
  Algorithms}, SODA '09, Society for Industrial and Applied Mathematics, USA
  (2009)  451--460, \doi{10.1137/1.9781611973068.50}.

\bibitem[{Cooper and Frieze(2005)}]{CF05}
C.~Cooper and A.~Frieze, {The cover time of random regular graphs}. \emph{SIAM
  J. Discrete Math.} 18.4 (2005),  728--740, \doi{10.1137/S0895480103428478}.

\bibitem[{Ding, Lubetzky and Peres(2014)}]{DLP14}
J.~Ding, E.~Lubetzky and Y.~Peres, {Anatomy of the giant component: the
  strictly supercritical regime}. \emph{Eur. J. Comb.} 35 (2014),  155--168,
  \doi{10.1016/j.ejc.2013.06.004}.

\bibitem[{{Diskin} and {Krivelevich}(2022)}]{DK22}
S.~{Diskin} and M.~{Krivelevich}, {{Expansion in supercritical random subgraphs
  of expanders and its consequences}}. \emph{arXiv e-prints}  (2022).
  \eprint{2205.04852}.

\bibitem[{Durrett(2006)}]{D07}
R.~Durrett, \emph{Random graph dynamics}, \emph{Camb. Ser. Stat. Probab.
  Math.}, vol.~20. Cambridge University Press, Cambridge (2006),
  \doi{10.1017/CBO9780511546594}.

\bibitem[{Dyer, Frieze and Kannan(1991)}]{DFK91}
M.~Dyer, A.~Frieze and R.~Kannan, {A random polynomial-time algorithm for
  approximating the volume of convex bodies}. \emph{J. Assoc. Comput. Mach.}
  38.1 (1991),  1--17, \doi{10.1145/102782.102783}.

\bibitem[{Dyer, Galanis, Goldberg, Jerrum and Vigoda(2020)}]{DGGJV20}
M.~E. Dyer, A.~Galanis, L.~A. Goldberg, M.~Jerrum and E.~Vigoda, {Random Walks
  on Small World Networks}. \emph{ACM Trans. Algorithms} 16.3 (2020),
  article~37, \doi{10.1145/3382208}.

\bibitem[{Erd{\H{o}}s and R{\'e}nyi(1960)}]{ER60}
P.~Erd{\H{o}}s and A.~R{\'e}nyi, {On the evolution of random graphs}.
  \emph{Publ. Math. Inst. Hung. Acad. Sci., Ser. A} 5.1 (1960),  17--60.

\bibitem[{Feige(2007)}]{F07}
U.~Feige, {Refuting smoothed {3CNF} formulas}. \emph{48th Annual IEEE Symposium
  on Foundations of Computer Science (FOCS'07)}, IEEE (2007)  407--417,
  \doi{10.1109/FOCS.2007.16}.

\bibitem[{Fountoulakis and Reed(2007)}]{FR07}
N.~Fountoulakis and B.~A. Reed, {Faster mixing and small bottlenecks}.
  \emph{Probab. Theory Related Fields} 137.3-4 (2007),  475--486,
  \doi{10.1007/s00440-006-0003-8}.

\bibitem[{Fountoulakis and Reed(2008)}]{FR08}
---{}---{}---, {The evolution of the mixing rate of a simple random walk on the
  giant component of a random graph}. \emph{Random Struct. Algorithms} 33.1
  (2008),  68--86, \doi{10.1002/rsa.20210}.

\bibitem[{Frieze, Krivelevich and Martin(2004)}]{FKM04}
A.~Frieze, M.~Krivelevich and R.~Martin, {The emergence of a giant component in
  random subgraphs of pseudo-random graphs}. \emph{Random Struct. Algorithms}
  24.1 (2004),  42--50, \doi{10.1002/rsa.10100}.

\bibitem[{{Han}, {Morris} and {Treglown}(2021)}]{HMT21}
J.~{Han}, P.~{Morris} and A.~{Treglown}, {{Tilings in randomly perturbed
  graphs: bridging the gap between Hajnal-Szemer{\'e}di and
  Johansson-Kahn-Vu}}. \emph{Random Struct. Algorithms} 58.3 (2021),  480--516,
  \doi{10.1002/rsa.20981}.

\bibitem[{Hermon, Sly and Sousi(2022)}]{HSS22}
J.~Hermon, A.~Sly and P.~Sousi, {Universality of cutoff for graphs with an
  added random matching}. \emph{Ann. Probab.} 50.1 (2022),  203--240,
  \doi{10.1214/21-AOP1532}.

\bibitem[{Jerrum and Sinclair(1988)}]{JS89}
M.~Jerrum and A.~Sinclair, {Conductance and the rapid mixing property for
  Markov chains: the approximation of permanent resolved}. \emph{Proceedings of
  the Twentieth Annual ACM Symposium on Theory of Computing}, STOC '88,
  Association for Computing Machinery, New York, NY, USA (1988)  235–244,
  \doi{10.1145/62212.62234}.

\bibitem[{Kalai, Samorodnitsky and Teng(2009)}]{KSS09}
A.~T. Kalai, A.~Samorodnitsky and S.-H. Teng, {Learning and smoothed analysis}.
  \emph{2009 50th Annual IEEE Symposium on Foundations of Computer Science},
  IEEE (2009)  395--404, \doi{10.1109/FOCS.2009.60}.

\bibitem[{Kleinberg(2000)}]{K00}
J.~Kleinberg, {The Small-World Phenomenon: An Algorithmic Perspective}.
  \emph{Proceedings of the Thirty-Second Annual ACM Symposium on Theory of
  Computing}, STOC '00, Association for Computing Machinery, New York, NY, USA
  (2000)  163--170, \doi{10.1145/335305.335325}.

\bibitem[{Krivelevich, Reichman and Samotij(2015)}]{KRS15}
M.~Krivelevich, D.~Reichman and W.~Samotij, {Smoothed analysis on connected
  graphs}. \emph{SIAM J. Discrete Math.} 29.3 (2015),  1654--1669,
  \doi{10.1137/151002496}.

\bibitem[{Krivelevich and Sudakov(2006)}]{KS06}
M.~Krivelevich and B.~Sudakov, {Pseudo-random graphs}. \emph{More sets, graphs
  and numbers},  199--262, Springer (2006), \doi{10.1007/978-3-540-32439-3_10}.

\bibitem[{Levin, Peres and Wilmer(2009)}]{LPW09}
D.~A. Levin, Y.~Peres and E.~L. Wilmer, \emph{Markov chains and mixing times}.
  American Mathematical Society, Providence, RI (2009), ISBN 978-0-8218-4739-8,
  \doi{10.1090/mbk/058}. With a chapter by James G. Propp and David B. Wilson.

\bibitem[{Lubetzky and Sly(2010)}]{LS10}
E.~Lubetzky and A.~Sly, {Cutoff phenomena for random walks on random regular
  graphs}. \emph{Duke Math. J.} 153.3 (2010),  475--510,
  \doi{10.1215/00127094-2010-029}.

\bibitem[{{Manzo}, {Quattropani} and {Scoppola}(2021)}]{MQS21}
F.~{Manzo}, M.~{Quattropani} and E.~{Scoppola}, {{A probabilistic proof of
  Cooper and Frieze's ``First Visit Time Lemma''}}. \emph{arXiv e-prints}
  (2021). \eprint{2101.10748}.

\bibitem[{Newman and Watts(1999{\natexlab{a}})}]{NW99b}
M.~E.~J. Newman and D.~J. Watts, {Renormalization group analysis of the
  small-world network model}. \emph{Phys. Lett., A} 263.4-6
  (1999{\natexlab{a}}),  341--346, \doi{10.1016/S0375-9601(99)00757-4}.

\bibitem[{Newman and Watts(1999{\natexlab{b}})}]{NW99a}
---{}---{}---, {Scaling and percolation in the small-world network model}.
  \emph{Phys. Rev. E} 60 (1999{\natexlab{b}}),  7332--7342,
  \doi{10.1103/PhysRevE.60.7332}.

\bibitem[{Randall(2006)}]{R03}
D.~Randall, {Rapidly mixing Markov chains with applications in computer science
  and physics}. \emph{Comput. Sci. Eng.} 8.2 (2006),  30--41,
  \doi{10.1109/MCSE.2006.30}.

\bibitem[{Sankar, Spielman and Teng(2006)}]{SST06}
A.~Sankar, D.~A. Spielman and S.-H. Teng, {Smoothed analysis of the condition
  numbers and growth factors of matrices}. \emph{SIAM J. Matrix Anal. Appl.}
  28.2 (2006),  446--476, \doi{10.1137/S0895479803436202}.

\bibitem[{Spielman and Teng(2004)}]{ST04}
D.~A. Spielman and S.-H. Teng, {Smoothed analysis of algorithms: Why the
  simplex algorithm usually takes polynomial time}. \emph{J. ACM} 51.3 (2004),
  385--463, \doi{10.1145/990308.990310}.

\bibitem[{Spielman and Teng(2009)}]{ST09}
---{}---{}---, {Smoothed analysis: an attempt to explain the behavior of
  algorithms in practice}. \emph{Commun. ACM} 52.10 (2009),  76--84,
  \doi{10.1145/1562764.1562785}.

\bibitem[{Vigoda(2000)}]{V00}
E.~Vigoda, {Improved bounds for sampling colorings}. \emph{J. Math. Phys.} 41.3
  (2000),  1555--1569, \doi{10.1063/1.533196}.

\bibitem[{Vu and Tao(2007)}]{TV07}
V.~H. Vu and T.~Tao, {The condition number of a randomly perturbed matrix}.
  \emph{Proceedings of the Thirty-Ninth Annual ACM Symposium on Theory of
  Computing}, STOC '07, Association for Computing Machinery, New York, NY, USA
  (2007)  248--255, \doi{10.1145/1250790.1250828}.

\end{thebibliography}


\end{document}